\numberwithin{equation}{section}
\theoremstyle{plain}
\newtheorem{thm}{Theorem}[section]
\newtheorem{prop}[thm]{Proposition}
\newtheorem{cor}[thm]{Corollary}
\newtheorem{lemma}[thm]{Lemma}
\newtheorem{remark}[thm]{Remark}
\begin{document}
\title[Landau equation]
{Regularizing effect of the spatially homogeneous Landau equation with soft potential}

\author[ ]
{ Xiaodong Cao \& Chaojiang Xu \& Yan Xu}

\address{Xiao-Dong Cao, Chao-Jiang Xu
\newline\indent
School of Mathematics and Key Laboratory of Mathematical MIIT,
\newline\indent
Nanjing University of Aeronautics and Astronautics, Nanjing 210016, China
\newline\indent
Yan Xu
\newline\indent
Department of Mathematical Sciences, Tsinghua University, Beijing 100084, China
}
\email{caoxiaodong@nuaa.edu.cn; xuchaojiang@nuaa.edu.cn; xu-y@mail.tsinghua.edu.cn}

\date{\today}

\subjclass[2010]{35B65,76P05,82C40}

\keywords{Spatially homogeneous Landau equation, analytic and Gelfand-Shilov smoothing effect, soft potentials}

\begin{abstract}
This paper investigates the Cauchy problem of the spatially homogeneous Landau equation with soft potential under the perturbation framework to global equilibrium. We prove that the solution to the Cauchy problem exhibits analyticity in the time variable and the Gelfand-Shilov regularizing effect in the velocity variables.
\end{abstract}

\maketitle

\section{Introduction}
\par The Cauchy problem for the spatially homogeneous Landau equation is given by
\begin{equation}\label{1-1}
\left\{
\begin{aligned}
 &\partial_t F=Q(F, F),\\
 &F|_{t=0}=F_0,
\end{aligned}
\right.
\end{equation}
where $F=F(t, v)\ge0$ represents the density distribution function at time $t\ge0$, with the velocity variable $v\in\mathbb R^3$. The Landau bilinear collision operator is defined as follows: 
\begin{equation}\label{Q}
    Q(G, F)(v)=\sum_{j, k=1}^3\partial_j\bigg(\int_{\mathbb R^3}a_{jk}(v-v_*)[G(v_*)\partial_kF(v)-\partial_kG(v_*)F(v)]dv_*\bigg),
\end{equation}
where the non-negative symmetric matrix $\left(a_{jk}\right)$ is given by
\begin{equation}\label{matrix A}
   a_{jk}(v)=(\delta_{jk}|v|^2-v_jv_k)|v|^\gamma,\quad \gamma\ge-3.
\end{equation}
The parameter $\gamma$ leads to the classification of hard potential if $\gamma>0$, Maxwellian molecules if $\gamma=0$, soft potential if $-3<\gamma<0$ and Coulombian potential if $\gamma=-3$.

The Landau equation is one of the fundamental kinetic equations obtained as a limit of the Boltzmann equation when grazing collisions prevail~\cite{V-1}. Extensive research has been conducted on the Landau equation with hard potential. Desvillettes-Villani~\cite{D-2} obtained the smoothness of solutions to the spatially homogeneous Landau equation.

For the hard potential case, the analytic smoothing effect for the spatially homogeneous Landau equation was established in~\cite{C-1}, assuming an initial datum with finite mass and energy. Under the perturbation setting to the normalized global Maxwellian, the analytic smoothing effect to solution has been studied in~\cite{L-4},  Gevrey regularity was treated in~\cite{C-2, CLX}, analytic Gelfand-Shilov smoothing effect was proven in~\cite{LX-5}. For the Maxwellian molecules case, the existence, uniqueness and smoothness of solutions to the spatially homogeneous equation have been studied in~\cite{V-2}, again under the assumption of an initial datum with finite mass and energy. Additionally, the analytic and Gelfand-Shilov smoothing effects were addressed in~\cite{L-2, M-1, MX}.

We are now interested in the spatially homogeneous Landau equation with soft potentials. In this work, we focus on studying the linearization of the Landau equation \eqref{1-1} around the Maxwellian distribution
$\mu(v)=(2\pi)^{-\frac32}e^{-\frac{|v|^2}{2}}. $
Considering the fluctuation of the density distribution function
\begin{equation*}
    F(t, v)=\mu(v)+\sqrt\mu(v)f(t, v),
\end{equation*}
and noting that $Q(\mu, \mu)=0$, the Cauchy problem \eqref{1-1} can be reformulated as
\begin{equation}\label{1-2}
\left\{
\begin{aligned}
 &\partial_tf+\mathcal Lf=\Gamma(f, f),\\
 &f|_{t=0}=f_0,
\end{aligned}
\right.
\end{equation}
where $F_0=\mu+\sqrt\mu f_0$, and
\begin{equation*}
    \Gamma(f, f)=\mu^{-\frac12}Q(\sqrt\mu f, \sqrt\mu f),
\end{equation*}
with the linear operator $\mathcal{L}$ defined as
\begin{equation}\label{linear operator}
\mathcal{L}=\mathcal{L}_1+\mathcal{L}_2,\ \ \ \mbox{with}\ \ \    \mathcal L_1f=-\Gamma(\sqrt\mu, f), \quad \mathcal L_2f=-\Gamma(f, \sqrt\mu).
\end{equation}
The diffusion part $\mathcal L_1$ is expressed as
\begin{equation}\label{1-3}
    \mathcal L_1f=-\nabla_v\cdot[A(v)\nabla_vf]+\left(A(v)\frac{v}{2}\cdot\frac{v}{2}\right)f-\nabla_v\cdot\left[A(v)\frac{v}{2}\right]f,
\end{equation}
where $A(v)=(\bar a_{jk})_{1\le j, k\le3}$ is a symmetric matrix, and
\begin{align*}
    \bar a_{jk}=a_{jk}*\mu=\int_{\mathbb R^3}\left(\delta_{jk}|v-v'|^2-(v_j-v'_j)(v_k-v'_k)\right)|v-v'|^\gamma\mu(v')dv'.
\end{align*}
Thus
$$
 \mathcal L_1f \ \sim\ \langle v\rangle^{\gamma}(-\Delta_{v}+\frac{|v|^{2}}{4}),
$$
and this operator is degenerate at $|v|=+\infty$ for the soft potential case $\gamma<0$.

For soft potentials, existence and uniqueness results can be found in~\cite{V-1, W}. Regarding smoothing properties, it was shown in~\cite{L-5} that solutions to the linear Landau equation with soft potentials exhibit analytic smoothness in both the velocity and time variables. The Gelfand-Shilov smoothing effect for moderately soft potentials $-2<\gamma<0$, was examined in~\cite{L-4}. Additionally, various regularity results in a near-equilibrium setting have been discussed in~\cite{C-4, C-5, S-1}.

We mention that our discussion is based on the following function spaces. Let $\Omega$ be an open subset of $\mathbb R^{n}$. The analytical space $\mathcal A(\Omega)$, consists of smooth functions that satisfy the following condition: $f\in \mathcal A(\Omega)$ if there exists a constant $C$ such that for all $\alpha\in\mathbb N^n$
\begin{align*}
    \left\|\partial^\alpha f\right\|_{L^2(\Omega)}\le C^{|\alpha|+1}\alpha!.
\end{align*}
The Gelfand-Shilov spaces $S^{\sigma}_{\nu}(\mathbb R^{n})$, with $\sigma, \nu>0, \sigma+\nu\ge1$, consist of smooth functions that satisfy the following condition: $f\in S^{\sigma}_{\nu}(\mathbb R^{n})$ if there exists a constant $C$ such that for all multi-indices $\alpha, \beta\in\mathbb N^n$,
\begin{align*}
    \left\|v^{\beta}\partial^\alpha f\right\|_{L^2(\mathbb R^{n})}\le C^{|\alpha|+|\beta|+1}(\alpha!)^{\sigma}(\beta!)^{\nu}.
\end{align*}

Before presenting the main result of the paper, we first introduce some notation. We denote the harmonic oscillators operator by $\mathcal H=-\Delta_{v}+\frac{|v|^{2}}{4}$ and the associated gradient by
\begin{align*}
    \nabla_{\mathcal H_{\pm}}=(A_{\pm, 1}, A_{\pm, 2}, A_{\pm, 3}),
\end{align*}
where
\begin{align*}
    A_{\pm, k}=\frac12v_{k}\mp\partial_{v_{k}}, (1\le k\le3), \quad A_{\pm}^{\alpha}=A_{\pm, 1}^{\alpha_{1}}A_{\pm, 2}^{\alpha_{2}}A_{\pm, 3}^{\alpha_{3}}, \ (\alpha\in\mathbb N^{3}).
\end{align*}
We also define, for some constants $c_0>0$ and $0<b\le 2$,  
$$
\omega_t(v)=e^{\frac{c_0}{1+t}\langle v\rangle^b},\ \ \ t\ge 0, \ v\in \mathbb{R}^3,
$$
with $\langle v\rangle=(1+|v|^2)^{\frac 12}$. 
Now, the main result of this paper is stated as follows.

\begin{thm}\label{thm}
For the soft potential $-3<\gamma<0$, assume that there exist  positive constants $c_{0}$ and $0<b\le 2$  such that for any $f_{0}\in L^{2}(\mathbb R^{3})$ satisfying $\|\omega_0 f_{0}\|_{L^{2}(\mathbb R^{3})}$ small enough, then the Cauchy problem \eqref{1-2} admits a unique solution
$\omega_{t} f\in \mathcal A(]0, \infty[; S^{\sigma}_{\sigma}(\mathbb R^{3}))$, where $\sigma=\max\{1, \frac{b-\gamma}{2b}\}$. Furthermore, for any  $T>0$, there exists a constant $C_{T}>0$ which depends on $\gamma, \ b, \ c_{0}$ and $T$, such that
\begin{align}\label{main result}
     \left\|\omega_t \nabla^{m}_{\mathcal H_{+}}f(t)\right\|_{L^{2}(\mathbb R^{3})}\le \left(\frac{C_T^{m+1} m!}{t^{m}}\right)^{\sigma}, \quad \forall \ m\in\mathbb N, \ \forall \ 0<t\le T,
\end{align}
and
\begin{equation*}
    \|\partial^m_tf(t)\|_{L^2(\mathbb R^3)}\le\frac{C_T^{m+1}}{t^m}m!,\quad \forall \ m\in\mathbb N, \ \forall \ 0<t\le T.
\end{equation*}
\end{thm}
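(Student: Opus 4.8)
The plan is to establish the theorem in three stages: first a global existence/uniqueness and exponential-weight energy estimate for \eqref{1-2}, then the velocity-variable Gelfand-Shilov regularization encoded in \eqref{main result}, and finally the time-analyticity as a consequence. The natural starting point is the coercivity of the linearized operator: from \eqref{1-3} one has the spectral-gap type estimate $\langle \mathcal L f, f\rangle \gtrsim \|\langle v\rangle^{\gamma/2}\nabla_{\mathcal H_+} f\|_{L^2}^2 - C\|f\|_{L^2}^2$ on the orthogonal complement of the kernel of $\mathcal L$, together with the known trilinear estimate for $\Gamma$ (e.g.\ $|\langle \Gamma(f,g),h\rangle|\lesssim \|\cdot\|$ with $\langle v\rangle^{\gamma/2}$-weighted anisotropic norms, as in the soft-potential references cited). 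Testing \eqref{1-2} against $\omega_t^2 f$ and carefully commuting the weight $\omega_t$ through $\mathcal L_1$ — the factor $e^{c_0\langle v\rangle^b/(1+t)}$ is chosen precisely so that the commutator $[\mathcal L_1,\omega_t]$ and the time-derivative term $\partial_t\omega_t = -\frac{c_0}{1+t^2}\langle v\rangle^b\omega_t$ (which is \emph{negative}) together absorb the loss of $\langle v\rangle^{b}$ weight against the $\langle v\rangle^{\gamma}$-degenerate diffusion, using $b\le 2$ and $-3<\gamma<0$ — yields a closed differential inequality for $\|\omega_t f(t)\|_{L^2}^2$ provided $\|\omega_0 f_0\|_{L^2}$ is small. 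This gives the global solution with $\omega_t f$ uniformly bounded in $L^2$.

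For the regularizing effect I would run an induction on $m$, the number of $\nabla_{\mathcal H_+}$ derivatives, in parallel with a time-weight. Introduce for suitable small $\tau>0$ the quantity
\[
\Phi_m(t) = \sum_{|\alpha|=m} \frac{t^{\sigma m}}{(m!)^{\sigma}} \bigl\| \omega_t A_+^{\alpha} f(t)\bigr\|_{L^2(\mathbb R^3)},
\]
and derive an energy identity for $\omega_t A_+^\alpha f$ by applying $A_+^\alpha$ to \eqref{1-2}. The commutators $[A_+^\alpha,\mathcal L_1]$ and $[A_+^\alpha,\omega_t]$ produce lower-order terms in $|\alpha|$ — here the algebra of the creation operators $A_{+,k}=\frac12 v_k-\partial_{v_k}$ relative to $\mathcal H$ is essential, since $[A_{+,k},\mathcal H]$ is again first order so commutators do not increase the differential order uncontrollably — while the nonlinear term, after distributing $A_+^\alpha$ by Leibniz, is handled by the trilinear estimate and the induction hypothesis, the small-data bound on $\|\omega_t f\|_{L^2}$ controlling the top-order factor. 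Summing over $|\alpha|=m$, choosing the time-weight exponent $\sigma m$ to match the $1/t$ blow-up rate generated at each step, and using that $\sigma=\max\{1,\frac{b-\gamma}{2b}\}$ is exactly the exponent for which the loss of $\langle v\rangle^{(-\gamma)}$-moments per derivative is compensated by the gain from $\omega_t$, one closes the induction and obtains \eqref{main result}; the equivalence of $\sum_{|\alpha|=m}\|A_+^\alpha f\|$-type norms with the Gelfand-Shilov norm $\|v^\beta\partial^\alpha f\|$ (standard harmonic-oscillator functional calculus) gives the claimed membership $\omega_t f\in S^\sigma_\sigma$.

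The time-analyticity estimate follows by a bootstrap from \eqref{1-2} itself: since $\partial_t f = -\mathcal L f + \Gamma(f,f)$ and $\mathcal L$ costs two $\nabla_{\mathcal H_+}$-derivatives (modulo the $\langle v\rangle^\gamma$ weight, which is harmless — in fact favorable — for $\gamma<0$), differentiating $m$ times in $t$ and invoking the just-proved spatial estimate with $2m$ derivatives, together with the trilinear estimate to handle $\partial_t^k\Gamma(f,f)=\sum\binom{m}{k}\Gamma(\partial_t^j f,\partial_t^k f)$, produces $\|\partial_t^m f(t)\|_{L^2}\le C_T^{m+1}m!/t^m$; the combinatorial factors from Leibniz and from the $(2m)!$ versus $(m!)^2$ in the spatial bound reorganize into the stated $m!$ because the $\sigma$-power in \eqref{main result} applied with $2m$ derivatives gives $(2m)!^{\sigma}/t^{2m\sigma}$ and for the time variable the relevant exponent collapses to $1$. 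The main obstacle I anticipate is the weight bookkeeping in the first two stages: making the commutator $[\mathcal L_1,\omega_t]$ and the dissipative term $\partial_t\omega_t<0$ genuinely absorb the $\langle v\rangle^{b-\gamma}$-type loss uniformly in $m$ and $t\in(0,T]$, which is where the precise form $\omega_t=e^{c_0\langle v\rangle^b/(1+t)}$, the constraint $b\le 2$, and the smallness of $\|\omega_0 f_0\|_{L^2}$ all have to be used simultaneously and quantitatively.
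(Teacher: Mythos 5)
Your first two stages track the paper closely: the weighted $L^2$ well-posedness with $\omega_t=e^{\frac{c_0}{1+t}\langle v\rangle^b}$ (the paper's Proposition \ref{existence2}) and the induction on $m$ for $\omega_t t^{m\sigma}\nabla_{\mathcal H_+}^m f$, where the dissipative term $\partial_t\omega_t$ supplies the $\langle v\rangle^{b/2}$-weighted norm that, interpolated against the $\langle v\rangle^{\gamma/2}$-weighted $A$-norm, absorbs the factor $m\sigma t^{2m\sigma-1}\|\nabla_{\mathcal H_+}^m f\|_{L^2}^2$ and forces $\sigma=\max\{1,\frac{b-\gamma}{2b}\}$ (the paper's Sections \ref{section commutators} and \ref{sect6}). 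That part of your outline is the right mechanism.

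The gap is in your third stage. You propose to obtain $\|\partial_t^m f\|_{L^2}\le C_T^{m+1}m!/t^m$ by writing $\partial_t^m f$ in terms of $\mathcal L^m f$ plus nonlinear terms and invoking \eqref{main result} with $2m$ velocity derivatives. But \eqref{main result} with $2m$ derivatives yields at best $\left((2m)!\right)^{\sigma}C^{2m\sigma}/t^{2m\sigma}\sim (m!)^{2\sigma}\tilde C^{m}/t^{2m\sigma}$, and there is no cancellation that "collapses the exponent to $1$": this gives Gevrey class $2\sigma$ in time with the wrong power of $t^{-1}$, not analyticity. The paper does \emph{not} derive time analyticity from the velocity smoothing at all; Section 5 proves it directly and independently (even without the weight $\omega_t$), by an induction on $k$ for the energy of $t^k\partial_t^k f$. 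The structural point you are missing is that in the weak formulation $\mathcal L$ never costs two derivatives: since $\mathcal L_1 f=-\Gamma(\sqrt\mu,f)$ and $\mathcal L_2 f=-\Gamma(f,\sqrt\mu)$, the trilinear estimate of Proposition \ref{trilinear} bounds every occurrence of $\mathcal L$ by a single $\|\cdot\|_A$ norm on each side, and these are exactly controlled by the time-integrated dissipation $\int_0^t\|s^{k}\partial_s^{k}f\|_A^2\,ds$ and $\int_0^t\|s^{k-1}\partial_s^{k-1}f\|_A^2\,ds$ available from the induction hypothesis. The remaining problematic term $k\int_0^t s^{2k-1}\|\partial_s^k f\|_{L^2}^2\,ds$ is handled not by trading time derivatives for velocity derivatives but by substituting the equation once, $\partial_s^k f=\partial_s^{k-1}\Gamma(f,f)-\mathcal L_1\partial_s^{k-1}f-\mathcal L_2\partial_s^{k-1}f$, and again applying the trilinear estimate. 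Without this device your argument cannot reach the stated $m!/t^m$ bound.
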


\begin{remark}
The results of reference ~\cite{LX} indicate that the solution of \eqref{1-2} belongs to $S^{1}_{1}(\mathbb R^{3})$ when $-1\le\gamma<0$, and belongs to $S^{\frac{1}{\gamma + 2}}_{\frac{1}{\gamma + 2}}(\mathbb R^{3})$ when $-2<\gamma<-1$. To overcome the difficulty of degeneracy, we introduce an exponential type weight $\omega_{t}(v)=e^{\frac{c_{0}}{1+t}\langle v\rangle^b}$ where the parameter $b$ affects the analyticity index. Specifically, it implies that: the solution belongs to $\mathcal A(]0, \infty[; S^{1}_{1}(\mathbb R^{3}))$ for $-b\le\gamma<0$, and further proven that it belongs to $\mathcal A(]0, \infty[; S^{\frac{b-\gamma}{2b}}_{\frac{b-\gamma}{2b}}(\mathbb R^{3}))$ for $-3<\gamma<-b$. So that we can obtain the Gelfand-Shilov smoothing effect for all soft potential.
\end{remark}

\begin{remark}
In~\cite{HJL}, they consider the spatially inhomogeneous Boltzmann equation without angular cutoff for soft potentials and obtained the global Gevrey regularity with the Gevrey index $\max\{\frac{2-\gamma}{4s}, 1\}$, for initial data with some exponential weight(where the parameter $b=2$). 
\end{remark}

\begin{remark}
In our main theorem \ref{thm}, by Proposition 5.2 of \cite{LX-5} and Theorem 2.1 of \cite{GPR}, we can obtain
$$
 \left\|\omega_t v^\beta \partial^\alpha_v f(t)\right\|_{L^{2}(\mathbb R^{3})}\le \left(\frac{C_T^{\alpha|+|\beta|+1} \beta! \alpha! }{t^{|\alpha|+|\beta|}}\right)^{\sigma}, \quad \forall \ \alpha, \beta\in\mathbb N^3, \ \forall \ 0<t\le T.
$$
Then we have
\begin{align*}
 e^{\frac {c_0}{1+T}\langle v\rangle^b  +c_1t \langle v \rangle^{\frac{1}{\sigma}}} f \in L^2(\mathbb{R}^3), \quad 0 < t \le T.
\end{align*}
(1) For the case of $-b \le \gamma \le 0$, we have proved that
$$
e^{\frac {c_0}{1+T}\langle v \rangle^b + c_1 t \langle v \rangle} f \in L^2(\mathbb{R}^3), \quad 0 < t \le T.
$$
Thus if $b \ge 1$, we have not improved the decreasing rate, and we obtain 
$$
e^{\frac {c_0}{1+T}\langle v \rangle^b} f \in L^2(\mathbb{R}^3), \quad 0 < t \le T,
$$
on the other hand, if $b < 1$, we improve the decreasing rate, i. e.
$$
 e^{c_1 t \langle v \rangle} f \in L^2(\mathbb{R}^3),  \quad 0 < t \le T.
$$
(2) For the case of $-3 < \gamma < -b$, we have proved that
$$
e^{\frac {c_0}{1+T}\langle v \rangle^b + c_1 t \langle v \rangle^{\frac{2b}{b - \gamma}}} f \in L^2(\mathbb{R}^3),  \quad 0 < t \le T.
$$
Thus if $0 < \frac{2}{b - \gamma} \le 1$, the solution has the same decreasing rate as the initial data, i. e.
$$
e^{\frac {c_0}{1+T} \langle v \rangle^b} f \in L^2(\mathbb{R}^3), \quad 0 < t \le T.
$$
On the other hand, once $\frac{2}{b - \gamma} > 1$, then 
we improve the decreasing rate, i. e.
$$
e^{c_1t \langle v \rangle^{\frac{2b}{b - \gamma}}} f \in L^2(\mathbb{R}^3), \quad 0 < t \le T.
$$

The above analysis is easier to understand with the following figure: only in the red area (2),  we improve the decreasing rate with respect to the initial date.

\begin{center}
\begin{tikzpicture}[scale=1.5]

\draw[->] (-3.5,0) -- (0.5,0) node[right] {$\gamma$};
\draw[->] (0,-1.5) -- (0,2.5) node[above] {$b$};

\node[below] at (-3,0) {$-3$};
\node[below] at (-2,0) {$-2$};
\node[below left] at (0,0) {$0$};
\node[right] at (0,2) {$2$};
\node[right] at (0,1) {$1$};
\node[right] at (0,-1) {$-1$};

\node[below] at (-1.3, 0) {$-b$};

\draw[dashed] (0,2) -- (-3,2);
\draw[dashed] (-3,2) -- (-3,0);

\draw[thick] (-1.3,1) -- (0,1);
\draw[thick] (-1.3,0) -- (-1.3,2);

\draw[dashed] (-1.3,0.7) -- (0,2) node[right] {};
\draw[thick] (-2,0) -- (-1.3,0.7) node[right] {};

\fill[pattern=north east lines, pattern color=green] (-3,2) -- (-3,0) -- (-2,0) -- (-1.3,0.7) -- (-1.3,2) -- cycle; 
\fill[pattern=north east lines, pattern color=green] (-1.3,2) -- (-1.3,1) -- (0,1) -- (0,2) -- cycle;

\fill[pattern=north west lines, pattern color=magenta] (-2,0) -- (0,0) -- (0,1) -- (-1.3,1) -- (-1.3,0.7) -- cycle; 

\node[above left] at (-1.8,0.7) {(1)};
\node[below right] at (-1,0.8) {(2)};

\end{tikzpicture}
\end{center}

\end{remark}

\section{Schema of proof for the main theorem}\label{sch}

We begin by recalling some preliminary results regarding Landau operators. For simplicity, with $r\in\mathbb R$, we denote the weighted Lebesgue spaces
\begin{align*}
    \|\langle\cdot\rangle^r f\|_{L^p(\mathbb R^3)}=\|f\|_{p, r},\quad 1\le p\le\infty,
\end{align*}
where the notation $\langle v\rangle=(1+|v|^2)^{\frac12}$. For the matrix $A$ defined in \eqref{1-3}, denote
\begin{align*}
    \|f\|^2_{A}=\sum_{j, k=1}^3\int\left(\bar a_{jk}\partial_jf\partial_kf+\frac14\bar a_{jk}v_jv_kf^2\right)dv=\frac12\sum_{j, k=1}^3\int\bar a_{jk}\left(A_{-, j}fA_{-, k}f+A_{+, j}fA_{+, k}f\right)dv.
\end{align*}
From corollary 1 in~\cite{G-1} and Proposition 2.3 in~\cite{LX-5}, for $\gamma>-3$, there exists a constant $C_1>0$ such that
\begin{equation}\label{2-1}
    \|f\|^2_{A}\ge C_1\left(\|\mathbf P_v\nabla f\|^2_{2, \frac{\gamma}{2}}+\|(\mathbf I-\mathbf P_v)\nabla f\|^2_{2, 1+\frac{\gamma}{2}}+\|f\|^2_{2, 1+\frac{\gamma}{2}}\right),
\end{equation}
and
\begin{equation}\label{2-1-1}
    \|f\|^2_{A}\ge C_1\left(\left\|\mathbf P_v\nabla_{\mathcal H_{\pm}}  f\right\|^2_{2, \frac{\gamma}{2}}+\left\|(\mathbf I-\mathbf P_v)\nabla_{\mathcal H_{\pm}} f\right\|^2_{2, 1+\frac{\gamma}{2}}\right).
\end{equation}
For any vector-valued function $G(v)=(G_1, G_2, G_3)$, defining the projection to the vector $v\in\mathbb R^3$ as
\begin{equation*}
    (\mathbf P_vG)_j=\sum_{k=1}^3G_kv_k\frac{v_j}{|v|^2},\quad 1\le j\le3.
\end{equation*}
Since $\nabla f=\mathbf P_v\nabla f+(\mathbf I-\mathbf P_v)\nabla f$,
combining the inequality \eqref{2-1}, we have
\begin{equation}\label{2-2}
    \|f\|_{A}\ge C_1\left(\|\nabla f\|_{2,\frac{\gamma}{2}}+\|f\|_{2, 1+\frac{\gamma}{2}}\right),
\end{equation}
and
\begin{equation}\label{2-2-1}
    \|f\|_{A}\ge C_1\left\|\nabla_{\mathcal H_{\pm}} f\right\|_{2,\frac{\gamma}{2}}.
\end{equation}

We begin with a singular integration. Review that, for any $\gamma>-3$ and $\delta>0$, we have
\begin{equation}\label{singular integration}
    \int_{\mathbb R^3}|v-v'|^{\gamma}e^{-\delta|v'|^2}dv'\le C_{\gamma, \delta}\langle v\rangle^{\gamma}.
\end{equation}
For later use, we need the following results for the linear Landau operator, which have been proved in~\cite{XX}.
\begin{lemma}~\cite{XX}\label{lemma2.1}
    Let $-3<\gamma<0$, then for any $0<\epsilon_1<1$, there exists a constant $C_{\epsilon_1}>0$ such that for any suitable function $f$
    \begin{equation*}
       (1-\epsilon_1)\|f\|^2_{A}\le (\mathcal L_1f,  f)_{L^2}+C_{\epsilon_1}\|f\|^2_{2, \frac{\gamma}{2}}.
    \end{equation*}
\end{lemma}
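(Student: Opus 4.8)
The plan is to isolate the exact quadratic form $\|f\|_A^2$ inside $(\mathcal L_1 f, f)_{L^2}$ by integration by parts and then show that the remaining first-order term is a genuine (lower-order) perturbation. Using the explicit formula \eqref{1-3} (and approximating a general $f$ by Schwartz functions so that all boundary terms vanish), integration by parts gives
\[
(\mathcal L_1 f, f)_{L^2}=\sum_{j,k=1}^3\int\bar a_{jk}\partial_jf\,\partial_kf\,dv+\frac14\sum_{j,k=1}^3\int\bar a_{jk}v_jv_kf^2\,dv-\int\Bigl(\nabla_v\cdot\bigl[A(v)\tfrac v2\bigr]\Bigr)f^2\,dv=\|f\|_A^2-\int\Bigl(\nabla_v\cdot\bigl[A(v)\tfrac v2\bigr]\Bigr)f^2\,dv.
\]
Hence the lemma reduces to the estimate $\bigl|\int(\nabla_v\cdot[A(v)\tfrac v2])f^2\,dv\bigr|\le\epsilon_1\|f\|_A^2+C_{\epsilon_1}\|f\|_{2,\frac\gamma2}^2$.

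First I would compute $\nabla_v\cdot[A(v)\tfrac v2]$ and exploit a cancellation in its top-order growth. Writing $\nabla_v\cdot[A\tfrac v2]=\tfrac12\sum_{j,k}(\partial_j\bar a_{jk})v_k+\tfrac12\,\mathrm{tr}\,A$ and recalling $\sum_j\partial_j\bar a_{jk}(v)=-2\int(v-v')_k|v-v'|^\gamma\mu(v')\,dv'$ together with $\mathrm{tr}\,A(v)=2\int|v-v'|^{\gamma+2}\mu(v')\,dv'$, the elementary identity $v\cdot(v-v')=|v-v'|^2+v'\cdot(v-v')$ makes the $|v-v'|^{\gamma+2}$ pieces cancel, leaving
\[
\nabla_v\cdot\bigl[A(v)\tfrac v2\bigr]=-\int_{\mathbb R^3}\bigl(v'\cdot(v-v')\bigr)|v-v'|^\gamma\mu(v')\,dv'.
\]
Consequently $\bigl|\nabla_v\cdot[A(v)\tfrac v2]\bigr|\le\int|v'|\,|v-v'|^{\gamma+1}\mu(v')\,dv'$, and since $|v'|\mu(v')\lesssim e^{-\delta|v'|^2}$ and $\gamma+1>-3$, the singular-integral bound \eqref{singular integration} gives $\bigl|\nabla_v\cdot[A(v)\tfrac v2]\bigr|\le C_\gamma\langle v\rangle^{\gamma+1}$.

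Then I would interpolate and absorb. Since $\gamma+1=\tfrac\gamma2+\tfrac{\gamma+2}2$, Cauchy--Schwarz yields $\int\langle v\rangle^{\gamma+1}f^2\,dv\le\|f\|_{2,\frac\gamma2}\|f\|_{2,1+\frac\gamma2}$, while \eqref{2-2} gives $\|f\|_{2,1+\frac\gamma2}\le C_1^{-1}\|f\|_A$. Young's inequality $ab\le\tfrac\eta2 a^2+\tfrac1{2\eta}b^2$ then produces
\[
C_\gamma\int\langle v\rangle^{\gamma+1}f^2\,dv\le\frac{C_\gamma\,\eta}{2C_1^2}\|f\|_A^2+\frac{C_\gamma}{2\eta}\|f\|_{2,\frac\gamma2}^2,
\]
and choosing $\eta$ so that $\tfrac{C_\gamma\eta}{2C_1^2}=\epsilon_1$, i.e. $C_{\epsilon_1}=C_\gamma^2/(4C_1^2\epsilon_1)$, gives the desired bound on the perturbation. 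Plugging it into the identity for $(\mathcal L_1 f,f)_{L^2}$ and rearranging completes the argument.

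The main obstacle is the cancellation in the second step: without it one only gets $\bigl|\nabla_v\cdot[A(v)\tfrac v2]\bigr|\lesssim\langle v\rangle^{\gamma+2}$, which is exactly the order of the coercive term $\|f\|_{2,1+\frac\gamma2}^2\lesssim\|f\|_A^2$ and hence cannot be absorbed with an arbitrarily small constant $\epsilon_1$. The point is that the top-order parts of $\bar b\cdot v$ (with $\bar b_k=\sum_j\partial_j\bar a_{jk}$) and of $\mathrm{tr}\,A$ cancel, so that the perturbation is one full power lower, of order $\langle v\rangle^{\gamma+1}$, which the weight $\langle v\rangle^{\gamma/2}$ on the right-hand side can soak up after interpolation against $\langle v\rangle^{1+\gamma/2}$.
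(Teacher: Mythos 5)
Your proof is correct and complete. The paper itself gives no proof of this lemma (it only cites \cite{XX}), but your argument is the natural one: the identity $(\mathcal L_1f,f)_{L^2}=\|f\|_A^2-\int(\nabla_v\cdot[A(v)\tfrac v2])f^2\,dv$ checks out (it also follows from the representation $\mathcal L_1f=A_{+,j}(\bar a_{jk}A_{-,k}f)$ and $A_{+,j}^*=A_{-,j}$), the computation $\sum_j\partial_ja_{jk}=-2v_k|v|^\gamma$ and the resulting cancellation with $\mathrm{tr}\,A$ are verified, and the interpolation/absorption step with \eqref{2-2} and \eqref{singular integration} (applied with exponent $\gamma+1>-3$) is valid. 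Your closing remark correctly identifies why the gain of one power of $\langle v\rangle$ is essential for absorbing the perturbation with an arbitrarily small $\epsilon_1$.
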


Now, we construct the trilinear estimate of the nonlinear Landau operator. 

\begin{prop}\label{trilinear}
Let $-3<\gamma<0$, then there exists a constant $C_{2}>0$, only depends on $\gamma$, such that for any suitable functions $F, G$ and $H$, we have
\begin{align*}
     \left|\left(\Gamma(F, G), H\right)_{L^{2}(\mathbb R^{3})}\right|\le C_{2}\left\|F\right\|_{2, \frac\gamma2}\left\|G\right\|_{A}\left\|H\right\|_{A}.
\end{align*}
\end{prop}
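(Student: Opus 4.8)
## Proof Strategy

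The plan is to write out $\Gamma(F,G)$ explicitly in divergence form, integrate by parts to move the outer derivative onto $H$, and then control the resulting bilinear expression using the weighted coercivity estimate \eqref{2-2}. Recalling the definition $\Gamma(F,G) = \mu^{-1/2}Q(\sqrt\mu F, \sqrt\mu G)$, a standard computation (as in the derivation of \eqref{1-3}) gives a representation of the form
\begin{equation*}
\Gamma(F,G) = \sum_{j,k}\partial_j\Big[\bar a_{jk}\big(F\,\partial_k G\big)\Big] - \sum_{j,k}\partial_j\Big[(\text{conv.\ against }v_k\sqrt\mu\,a_{jk})\,F\,G\Big] + (\text{lower order terms}),
\end{equation*}
more precisely a combination of terms where the matrix $\bar a_{jk}$ (or $\bar a_{jk}$ convolved with polynomial weights times $\sqrt\mu$) multiplies $F$ times first-order operators $A_{\pm,k}G$. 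Pairing with $H$ in $L^2$ and integrating by parts transfers the outer $\partial_j$ to $H$, producing terms of the schematic form $\int \bar a_{jk}\,F\,(A_{\pm,k}G)(A_{\pm,j}H)\,dv$ plus contributions where the weight $v\sqrt\mu$ factors absorb the extra velocity growth.

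The key step is then a pointwise bound on the matrix symbol: for $-3<\gamma<0$ one has $|\bar a_{jk}(v)|\lesssim \langle v\rangle^{\gamma+2}$, and more importantly the contraction in the direction of $\nabla G,\nabla H$ respects the splitting $\mathbf P_v$ / $(\mathbf I-\mathbf P_v)$ — along $v$ the symbol behaves like $\langle v\rangle^\gamma$ and transversally like $\langle v\rangle^{\gamma+2}$ — exactly matching the weights appearing in the $\|\cdot\|_A$-norm \eqref{2-1-1}. Using Cauchy--Schwarz in the $dv$ integral, I would estimate
\begin{equation*}
\Big|\int \bar a_{jk}\,F\,(A_{\pm,k}G)(A_{\pm,j}H)\,dv\Big| \lesssim \|\langle v\rangle^{\gamma/2}F\|_{?}\;\big\|\nabla_{\mathcal H_\pm}G\big\|_{2,\gamma/2}\,\big\|\nabla_{\mathcal H_\pm}H\big\|_{2,\gamma/2},
\end{equation*}
and here one must be careful that $F$ is measured only in $\|F\|_{2,\gamma/2}$, not a stronger norm. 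This is where the singular integral bound \eqref{singular integration} and the Gaussian factor $\sqrt\mu$ hidden in the convolutions do the real work: after Cauchy--Schwarz one gets an $L^\infty_v$-type factor $\sup_v \langle v\rangle^{-\gamma}\int |v-v'|^\gamma \sqrt{\mu(v')}|F(v')|\cdots$, which by \eqref{singular integration} combined with a further Cauchy--Schwarz in $v'$ (putting half the Gaussian onto $F$) is controlled by $\|F\|_{2,\gamma/2}$. Finally, invoking \eqref{2-2-1} to replace $\|\nabla_{\mathcal H_\pm}G\|_{2,\gamma/2}$ by $\|G\|_A$ and likewise for $H$ closes the estimate.

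The main obstacle I anticipate is bookkeeping the precise weights through the integration by parts and convolution structure: the Landau kernel produces several families of terms (the genuine second-order diffusion term, first-order drift terms coming from $\partial_k\sqrt\mu$, and a zeroth-order term), and each must be shown to fit under the same three-factor bound without ever demanding more than $\|F\|_{2,\gamma/2}$ on the first slot. In particular the transversal part $(\mathbf I-\mathbf P_v)\nabla$ carries weight $\langle v\rangle^{1+\gamma/2}$ in \eqref{2-1-1}, so I must verify that whenever the symbol contributes the heavier $\langle v\rangle^{\gamma+2}$ growth it is genuinely contracted against transversal derivatives of both $G$ and $H$ — a cancellation that is built into the structure of $a_{jk}$ (it annihilates the radial direction) and must be used, not merely the crude bound $|\bar a_{jk}|\lesssim\langle v\rangle^{\gamma+2}$. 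Apart from that, the argument is a relatively routine combination of integration by parts, \eqref{singular integration}, Cauchy--Schwarz, and \eqref{2-2-1}.
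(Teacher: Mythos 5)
Your overall skeleton is right: use the $A_{\pm}$-representation of $\Gamma$ (Lemma \ref{representations}), integrate by parts, and control the resulting bilinear forms by the coercivity inequalities \eqref{2-1-1}, \eqref{2-2-1}. You also correctly flag the central difficulty — the anisotropy of $\|\cdot\|_A$ (weight $\langle v\rangle^{\gamma/2}$ on radial derivatives, $\langle v\rangle^{1+\gamma/2}$ on transversal ones) versus the crude bound $|a_{jk}|\lesssim\langle v\rangle^{\gamma+2}$. But two steps in your sketch do not go through as written.

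First, the $L^\infty_v$-type claim is false for $-3<\gamma<0$. You propose to bound
$\sup_v\,\langle v\rangle^{-\gamma}\int|v-v'|^{\gamma}\sqrt{\mu(v')}|F(v')|\,dv'$
by $\|F\|_{2,\gamma/2}$ via \eqref{singular integration} plus a Cauchy--Schwarz in $v'$. Any such Cauchy--Schwarz leaves one factor of the form $\bigl(\int|v-v'|^{\gamma}\mu^{\delta}(v')\langle v'\rangle^{\gamma}|F(v')|^2\,dv'\bigr)^{1/2}$, and to convert this to $\langle v\rangle^{\gamma/2}\|F\|_{2,\gamma/2}$ you would need $|v-v'|^{\gamma}\mu^{\delta}(v')\lesssim\langle v\rangle^{\gamma}\langle v'\rangle^{\gamma}$ pointwise, which blows up as $v'\to v$. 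Splitting the kernel evenly fails too: $\int|v-v'|^{2\gamma}\,dv'$ over $|v-v'|<1$ diverges for $\gamma\le-3/2$. For hard potentials ($\gamma\ge0$) there is no local singularity and your step works; for soft potentials it does not, and a genuine $L^\infty_v$ bound on the convolved symbol with only $\|F\|_{2,\gamma/2}$ control is simply not available.

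Second, the "main obstacle" you describe is not a bookkeeping detail but the actual heart of the argument, and your sketch does not resolve it. The fix — used for the related estimate \eqref{2.9} in Section \ref{section commutators}, and in \cite{L-4} which the paper cites for precisely this proposition — is the decomposition
$\{|v|\le1\}\cup\{2|v'|\ge|v|,\ |v|\ge1\}\cup\{2|v'|\le|v|,\ |v|\ge1\}=\Omega_1\cup\Omega_2\cup\Omega_3$.
In $\Omega_1\cup\Omega_2$ the Gaussian $\sqrt\mu(v')$ produces superexponential decay in $v$ (and tames the singularity because $|v-v'|^{\gamma+2}$ is locally square-integrable for $\gamma>-7/2$); one obtains $\lesssim\langle v\rangle^{\gamma}$ without needing the directional structure. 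In $\Omega_3$ one has $|v-v'|\ge|v|/2\ge1/2$, so there is no singularity, and one performs a second-order Taylor expansion of $a_{jk}(v-v')$ about $v'=0$: the zeroth-order term $a_{jk}(v)$ annihilates $\mathbf P_v$ on both slots by \eqref{sum=0-1}, the first-order term $\partial_l a_{jk}(v)v'_l$ annihilates at least one $\mathbf P_v$, and the second-order remainder already has the right $\langle v\rangle^{\gamma}$ decay. This is exactly what produces the $\mathbf P_v/(\mathbf I-\mathbf P_v)$ weights of \eqref{2-1-1} rather than the crude $\langle v\rangle^{\gamma+2}$. Without this region split and Taylor expansion your argument cannot close; with them, the rest of your sketch (integration by parts, Cauchy--Schwarz, \eqref{singular integration}, \eqref{2-2-1}) is indeed routine.
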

The proof of this Proposition is similar to that in~\cite{L-4}, we just need to pay more attention to the parameter $\gamma<0$.

\bigskip
Using the equation \eqref{1-2}, we can obtain the following $L^{2}$-estimate
\begin{align}\label{L2-estimate}
\begin{split}
    &\frac12\frac{d}{dt}\left\|t^{m\sigma}\nabla_{\mathcal H_{+}}^{m}f(t)\right\|^{2}_{L^{2}(\mathbb R^{3})}+\left(t^{m\sigma}\nabla_{\mathcal H_{+}}^{m}\mathcal Lf, t^{m\sigma}\nabla_{\mathcal H_{+}}^{m}f\right)_{L^{2}(\mathbb R^{3})}\\
    &=mt^{2m\sigma-1}\left\|\nabla_{\mathcal H_{+}}^{m}f(t)\right\|^{2}_{L^{2}(\mathbb R^{3})}+\left(t^{m\sigma}\nabla_{\mathcal H_{+}}^{m}\Gamma(f, f), t^{m\sigma}\nabla_{\mathcal H_{+}}^{m}f\right)_{L^{2}(\mathbb R^{3})}.
\end{split}
\end{align}
Using Lemma \ref{lemma2.1},  we have
$$
  \left(t^{m\sigma}\mathcal L_{1}\nabla_{\mathcal H_{+}}^{m}f, t^{m\sigma}\nabla_{\mathcal H_{+}}^{m}f\right)_{L^{2}(\mathbb R^{3})}\ge\frac 12 \left\|t^{m\sigma}\nabla^{m}_{\mathcal H_{+}}f\right\|^{2}_{A}-C_{3}\left\|t^{m\sigma}\nabla^{m}_{\mathcal H_{+}} f\right\|^{2}_{2, \frac\gamma2}
 $$
then we can obtain
\begin{align*}
    &\frac12\frac{d}{dt}\left\|t^{m\sigma}\nabla_{\mathcal H_{+}}^{m}f(t)\right\|^{2}_{L^{2}(\mathbb R^{3})}+\frac 12\left\|t^{m\sigma}\nabla^{m}_{\mathcal H_{+}}f\right\|^{2}_{A}\\
    &\le\left|(m\sigma)t^{2m\sigma-1}\left\|\nabla_{\mathcal H_{+}}^{m}f(t)\right\|^{2}_{L^{2}(\mathbb R^{3})}\right|+C_{3}\left\|t^{m\sigma}\nabla^{m}_{\mathcal H_{+}} f\right\|^{2}_{2, \frac\gamma2}\\
    &\quad+\left(t^{m\sigma}\nabla_{\mathcal H_{+}}^{m}\Gamma(f, f), t^{m\sigma}\nabla_{\mathcal H_{+}}^{m}f\right)_{L^{2}(\mathbb R^{3})}+\left|\left(t^{m\sigma}\nabla_{\mathcal H_{+}}^{m}\mathcal L_{2}f, t^{m\sigma}\nabla_{\mathcal H_{+}}^{m}f\right)_{L^{2}(\mathbb R^{3})}\right|\\
    &\quad+\left|\left(t^{m\sigma}[\nabla_{\mathcal H_{+}}^{m},\ \mathcal L_{1}]f, t^{m\sigma}\nabla_{\mathcal H_{+}}^{m}f\right)_{L^{2}(\mathbb R^{3})}\right|.
\end{align*}
The second term in the right hand side $\left\|t^{m\sigma}\nabla^{m}_{\mathcal H_{+}} f\right\|^{2}_{2, \frac\gamma2}$ can be treated by using Gronwall's inequality. The third-fifth term can be treated as the hard potential case using induction.

However the first term $(m\sigma) t^{2m\sigma-1}\left\|\nabla_{\mathcal H_{+}}^{m}f(t)\right\|^{2}_{L^{2}(\mathbb R^{3})}$ cannot be bounded through induction. Since
\begin{align*}
&(m\sigma) t^{2m\sigma-1}\left\|\nabla_{\mathcal H_{+}}^{m}f(t)\right\|^{2}_{L^{2}(\mathbb R^{3})}=
(m\sigma) t^{2m\sigma-1}\left( \nabla_{\mathcal H_{+}}^{m}f(t), \ \nabla_{\mathcal H_{+}}^{m}f(t)\right)_{L^{2}(\mathbb R^{3})}\\
&\le (m\sigma)^2 t^{2(m\sigma-1)}\left\|\nabla_{\mathcal H_{+}} \nabla_{\mathcal H_{+}}^{m-1}f(t)\right\|^{2}_{L^{2}(\mathbb R^{3})}+ t^{2m\sigma}\left\|\nabla_{\mathcal H_{+}}^{m}f(t)\right\|^{2}_{L^{2}(\mathbb R^{3})},
\end{align*}
and for the soft potential case $\gamma<0$, due to \eqref{2-2-1},
the term $\left\|\nabla_{\mathcal H_{+}} \nabla_{\mathcal H_{+}}^{m-1}f(t)\right\|^{2}_{L^{2}(\mathbb R^{3})}$ cannot be controlled by
$\left\|\nabla_{\mathcal H_{+}}^{m-1}f(t)\right\|^{2}_{A}$.

On the other hand, by using the interpolation,  for $b>0,\  \gamma<0$ and $\sigma=\max\{1, \frac{b-\gamma}{2b}\}$, we have that for
$0<t\le T$,
\begin{align*}
    &(m\sigma) t^{2m\sigma-1}\left\| \nabla_{\mathcal H_{+}}^{m} f(t)\right\|^{2}_{L^{2}(\mathbb R^{3})}\\
     &= (m\sigma)t^{2m\sigma-1}\int_{\mathbb R^{3}}\left|\langle v\rangle^{\frac b2} \nabla_{\mathcal H_{+}}^{m} f(t, v)\right|^{2\cdot\frac{-\gamma}{b-\gamma}}\left|\langle v\rangle^{\frac\gamma2} \nabla_{\mathcal H_{+}}^{m} f(t, v)\right|^{2\cdot\frac{b}{b-\gamma}}dv\\
     &\le  (m\sigma) t^{\frac{2b\sigma}{b-\gamma}-1}\left\|\langle\cdot\rangle^{\frac b2} t^{m\sigma}\nabla_{\mathcal H_{+}}^{m} f(t)\right\|^{\frac{-2\gamma}{b-\gamma}}_{L^{2}(\mathbb R^{3})}\left(\frac{1}{C_{1}}\left\|t^{(m-1)\sigma}\nabla_{\mathcal H_{+}}^{m-1} f(t)\right\|_{A}\right)^{\frac{2b}{b-\gamma}}\\
     &\le \epsilon \left\|\langle\cdot\rangle^{\frac b2} t^{m\sigma}\nabla_{\mathcal H_{+}}^{m} f(t)\right\|^{2}_{L^{2}(\mathbb R^{3})}+C_\epsilon \left(\frac{\tilde{T}_0 m\sigma}{C_1}\right)^{\frac{b}{b-\gamma}}
\left\|t^{(m-1)\sigma}\nabla_{\mathcal H_{+}}^{m-1} f(t)\right\|_{A}^{2} \\
&\le\epsilon \left\|\langle\cdot\rangle^{\frac b2} t^{m\sigma}\nabla_{\mathcal H_{+}}^{m} f(t)\right\|^{2}_{L^{2}(\mathbb R^{3})}+\tilde{C}_\epsilon (m-1)^{2\sigma}
\left\|t^{(m-1)\sigma}\nabla_{\mathcal H_{+}}^{m-1} f(t)\right\|_{A}^{2} ,
\end{align*}
here we use  $\frac{2b\sigma}{b-\gamma}-1\ge 0$, and $\tilde{T}_0=T^{\frac{2b\sigma}{b-\gamma}-1}$. Now, the second term on the right-hand side can be controlled by induction, but the first term on the right-hand side is with a positive weight $\langle v \rangle^{\frac b2}$.
So we need to use the following exponential-type weighted functions
\begin{align*}
\omega_{t}(v) = e^{\frac{c_{0}}{1+t}\langle v\rangle^{b}},
\end{align*}
with $c_0>0$, $0<b\le2$, then with the equation
$$
\partial_{t}\left(\omega_{t} f\right)+\frac{c_{0}}{(1+t)^{2}}{\langle v\rangle^{b}}(\omega_{t} f)+\omega_{t}\mathcal Lf=\omega_{t}\Gamma(f, f),
$$
we can establish the following $L^{2}$-estimate
\begin{align*}
     &\frac12\frac{d}{dt}\left\|\omega_{t} t^{m\sigma}\nabla_{\mathcal H_{+}}^{m} f(t)\right\|^2_{L^2(\mathbb R^3)}+\frac{c_0}{(1 + t)^2}\left\|\langle\cdot\rangle^{\frac b2}\omega_{t} t^{m\sigma}\nabla_{\mathcal H_{+}}^{m} f(t)\right\|^{2}_{L^{2}(\mathbb R^{3})}+\left(\omega_{t} t^{m\sigma}\nabla_{\mathcal H_{+}}^{m} \mathcal Lf, \ \omega_{t} t^{m\sigma}\nabla_{\mathcal H_{+}}^{m} f\right)_{L^{2}(\mathbb R^{3})}\\
     &=(m\sigma) t^{2m\sigma-1}\left\|\omega_{t} \nabla_{\mathcal H_{+}}^{m} f(t)\right\|^{2}_{L^{2}(\mathbb R^{3})}+\left(\omega_{t} t^{m\sigma}\nabla_{\mathcal H_{+}}^{m} \Gamma(f, f), \ \omega_{t} t^{m\sigma}\nabla_{\mathcal H_{+}}^{m} f\right)_{L^{2}(\mathbb R^{3})},
\end{align*}
The primary distinction from \eqref{L2-estimate} lies in the left-hand side above, where we gain a term 
$$\left\|\langle\cdot\rangle^{\frac b2}\omega_{t} t^{m\sigma}\nabla_{\mathcal H_{+}}^{m} f(t)\right\|^{2}_{L^{2}(\mathbb R^{3})}.$$
Consequently, the term $(m\sigma) t^{2m\sigma-1}\left\|\omega_{t} \nabla_{\mathcal H_{+}}^{m} f(t)\right\|^{2}_{L^{2}(\mathbb R^{3})}$ can be bounded by induction.

Next, we will focus on the estimation of commutators of Landau operators with weight $\omega_{t}$ and $\nabla_{\mathcal H_{+}}^{m}$,as stated in the following Proposition.

\begin{prop} \label{prop2.1}
Let $-3<\gamma<0$, then for any suitable functions $f$ and $m\in\mathbb N_{+}$, we have the following estimates with weights $\omega_{t}$ for $t\ge0$
\begin{align}
 \left|\left(\omega_{t}  \nabla_{\mathcal H_{+}}^{m}\Gamma(f, f), \  \omega_{t}  \nabla_{\mathcal H_{+}}^{m}f\right)_{L^{2}(\mathbb R^{3})}\right|\le C_{4}\sum_{p=0}^{m}\binom{m}{p}\left\|\nabla^{p}_{\mathcal H_{+}}f\right\|_{2, \frac\gamma2}\left\|\omega_{t} \nabla^{m-p}_{\mathcal H_{+}}f\right\|_{A}\left\|\omega_{t} \nabla^{m}_{\mathcal H_{+}}f\right\|_{A}, \label{2.7}
\end{align}
\begin{align}
 \left|\left(\omega_{t} \nabla_{\mathcal H_{+}}^{m}\mathcal L_{2}f,\  \omega_{t} \nabla_{\mathcal H_{+}}^{m}f\right)_{L^{2}(\mathbb R^{3})}\right|\le C_{6}\sum_{p=0}^{m}\binom{m}{p}\left(\sqrt{C_{5}}\right)^{m-p}\sqrt{(m-p+3)!}\left\|\nabla^{p}_{\mathcal H_{+}}f\right\|_{2, \frac\gamma2}\left\|\omega_{t} \nabla^{m}_{\mathcal H_{+}}f\right\|_{A}, \label{2.8}
\end{align}
\begin{align}
\begin{split}
&\left|\left([\omega_{t} \nabla_{\mathcal H_{+}}^{m},\ \mathcal L_{1}]f,\ \omega_{t} \nabla_{\mathcal H_{+}}^{m}f\right)_{L^{2}(\mathbb R^{3})}\right|\le\frac18\left\|\omega_{t}\nabla^{m}_{\mathcal H_{+}}f\right\|^{2}_{A} \\
&\qquad+C_{7}\sum_{p=0}^{m-1}\binom{m}{p}\sqrt{\frac{(m-p+2)!}{2}}\left\|\omega_{t}\nabla^{p}_{\mathcal H_{+}}f\right\|_{A}\left\|\omega_{t}\nabla^{m}_{\mathcal H_{+}}f\right\|_{A}\\
&\qquad+C_{7}\sum_{p=1}^{m}\binom{m}{p}p\sqrt{\frac{(m-p+2)!}{2}}\left\|\omega_{t}\nabla^{p-1}_{\mathcal H_{+}}f\right\|_{A}\left\|\omega_{t}\nabla^{m}_{\mathcal H_{+}}f\right\|_{A}, \label{2.9}
\end{split}
\end{align}
where $C_{4}-C_{7}$ are positive constants, which depend on $\gamma$, $b$ and $c_{0}$.
\end{prop}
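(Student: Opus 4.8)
The plan is to prove the three estimates \eqref{2.7}, \eqref{2.8}, \eqref{2.9} by the same basic mechanism: commute $\omega_t\nabla_{\mathcal H_+}^m$ through the relevant operator, apply the Leibniz rule for $\nabla_{\mathcal H_+}^m$, control the commutators between $\omega_t$, the creation operators $A_{+,k}$, and the coefficients of $\mathcal L_1$, $\mathcal L_2$, $\Gamma$, and then close each term with the trilinear estimate of Proposition~\ref{trilinear}, Lemma~\ref{lemma2.1}, and the coercivity bounds \eqref{2-1}--\eqref{2-2-1}. The key structural facts I will exploit are: (i) $[A_{+,k},A_{+,j}]=0$, so $\nabla_{\mathcal H_+}^m$ is an honest product of commuting first-order operators and the Leibniz expansion is clean; (ii) $[\partial_{v_k},\omega_t]=\partial_{v_k}(\tfrac{c_0}{1+t}\langle v\rangle^b)\,\omega_t$, and since $0<b\le 2$ one has $|\nabla_v^j(\langle v\rangle^b)|\lesssim \langle v\rangle^{b-1}\lesssim \langle v\rangle$ (using $b\le 2$), so each time a derivative hits $\omega_t$ it costs only a factor $\langle v\rangle$, which is exactly absorbable by the $\langle v\rangle^{1+\gamma/2}$-gain in $\|\cdot\|_A$ against a $\langle v\rangle^{\gamma/2}$; (iii) the coefficients $\bar a_{jk}$ and their $\mathcal H_+$-derivatives are smooth with $\langle v\rangle^{\gamma+2}$-type growth (or $\langle v\rangle^{\gamma}$ after two derivatives), and Gaussian-type quantities like $A(v)\tfrac v2\cdot\tfrac v2$ and $\nabla\cdot(A\tfrac v2)$ produce, after $p$ applications of $\nabla_{\mathcal H_+}$, bounds of the form $(\sqrt{C_5})^{p}\sqrt{(p+3)!}\,\langle v\rangle^{\gamma/2}$ — this is the source of the combinatorial weights $\sqrt{(m-p+3)!}$, $\sqrt{(m-p+2)!}$ appearing in \eqref{2.8}, \eqref{2.9}.

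For \eqref{2.7}: write $\omega_t\nabla_{\mathcal H_+}^m\Gamma(f,f)$, move $\omega_t$ inside (this is harmless: $\Gamma$ is a bilinear differential operator and $\omega_t\Gamma(f,f)=\Gamma(f,\omega_t f)+[\text{lower order}]$, or more simply one keeps $\omega_t$ on the outside and uses that $\omega_t$ commutes up to the $\langle v\rangle$-cost above with the structure), then apply the Leibniz rule $\nabla_{\mathcal H_+}^m\Gamma(f,f)=\sum_{p}\binom mp \Gamma(\nabla_{\mathcal H_+}^p f, \nabla_{\mathcal H_+}^{m-p}f)$ up to commutators of $\nabla_{\mathcal H_+}$ with the convolution kernel (these commutators again only shift powers of $\langle v\rangle$ harmlessly). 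Pairing with $\omega_t\nabla_{\mathcal H_+}^m f$ and invoking Proposition~\ref{trilinear} with $F=\nabla_{\mathcal H_+}^pf$, $G=\omega_t\nabla_{\mathcal H_+}^{m-p}f$, $H=\omega_t\nabla_{\mathcal H_+}^m f$ (putting the weight on the last two slots, which is where $\|\cdot\|_A$ appears, and noting $\|F\|_{2,\gamma/2}$ needs no weight) gives \eqref{2.7} directly. For \eqref{2.8}: $\mathcal L_2 f=-\Gamma(f,\sqrt\mu)$, so $\nabla_{\mathcal H_+}^m\mathcal L_2 f=-\sum_p\binom mp\Gamma(\nabla_{\mathcal H_+}^pf,\nabla_{\mathcal H_+}^{m-p}\sqrt\mu)$; the point is that $\nabla_{\mathcal H_+}^{m-p}\sqrt\mu$ is explicitly computable — it is a Hermite-type function, bounded in the relevant $\|\cdot\|$ by $(\sqrt{C_5})^{m-p}\sqrt{(m-p+3)!}$ times a Gaussian — and then Proposition~\ref{trilinear} applied with $F=\nabla_{\mathcal H_+}^pf$, $G=\nabla_{\mathcal H_+}^{m-p}\sqrt\mu$ (whose $\|\cdot\|_A$ norm is exactly the claimed combinatorial factor), $H=\omega_t\nabla_{\mathcal H_+}^m f$ yields \eqref{2.8}; here one must check that the weight $\omega_t$ against the Gaussian $G$ is still finite, which holds since $b\le 2$ so $\omega_t$ grows slower than $e^{|v|^2/4}$.

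For \eqref{2.9}, the commutator estimate, I expect the main obstacle. Expand $[\omega_t\nabla_{\mathcal H_+}^m,\mathcal L_1]f$ using \eqref{1-3}: the commutator splits into (a) $[\nabla_{\mathcal H_+}^m,\mathcal L_1]$ acting first, then $\omega_t$; and (b) terms where a derivative from $\mathcal L_1$'s principal part $-\nabla\cdot(A\nabla\cdot)$ or from $\nabla_{\mathcal H_+}^m$ hits $\omega_t$. Part (a) is the ``hard potential type'' computation: commuting $\nabla_{\mathcal H_+}^m$ past the second-order operator $-\nabla\cdot(A(v)\nabla\cdot)$ produces, by Leibniz, terms $\partial^\kappa A$ paired against $\nabla_{\mathcal H_+}^{p}$-derivatives of $f$ with $p\le m$, $p=m$ excluded in the genuine-commutator part but contributing at order $m$ only through $\partial A$ with one derivative on $f$ — these give the $\sum_{p=0}^{m-1}$ and $\sum_{p=1}^m$ sums with the $p$-weight in the second sum coming from the extra $\partial_{v}$ landing on $\nabla_{\mathcal H_+}^{p-1}f$; the combinatorial factor $\sqrt{(m-p+2)!/2}$ tracks the growth of $\partial^\kappa$ of the zeroth-order coefficients $A\tfrac v2\cdot\tfrac v2$ and $\nabla\cdot(A\tfrac v2)$. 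Part (b) is the genuinely new soft-potential/weight interaction: each derivative on $\omega_t$ costs $\tfrac{c_0}{1+t}|\nabla\langle v\rangle^b|\lesssim \langle v\rangle^{b-1}$; against the principal part this produces a term like $\int A(v)(\nabla\omega_t/\omega_t)\,\omega_t\nabla_{\mathcal H_+}^m f\cdot \omega_t\nabla_{\mathcal H_+}^m f$ with $A(v)\sim\langle v\rangle^{\gamma+2}$, so the total weight is $\langle v\rangle^{\gamma+2}\langle v\rangle^{b-1}=\langle v\rangle^{\gamma+b+1}$ against two copies of $\nabla_{\mathcal H_+}^m f$ — I absorb this into $\tfrac18\|\omega_t\nabla_{\mathcal H_+}^m f\|_A^2$ by Cauchy–Schwarz, using that $\|\cdot\|_A\gtrsim\|\cdot\|_{2,1+\gamma/2}$ on the non-projected part and $\|\cdot\|_{2,\gamma/2}$ on the projected part, together with the crucial observation that $\nabla\omega_t/\omega_t$ is \emph{parallel to }$v$, hence lies in the range of $\mathbf P_v$, so it only sees the weaker $\langle v\rangle^{\gamma/2}$ weight and the algebra $\gamma/2+\gamma/2+(b-1)+2=\gamma+b+1\le \gamma+2\cdot(1+\gamma/2)$ — wait, one needs $b+1\le 2+\gamma$, i.e. $b\le 1+\gamma$, which \emph{fails} for small $\gamma$; the correct resolution is that the extra growth is paid for by the \emph{good term} $\tfrac{c_0}{(1+t)^2}\|\langle v\rangle^{b/2}\omega_t\nabla_{\mathcal H_+}^m f\|^2$ already isolated in the scheme, i.e.\ some of these $\omega_t$-derivative terms are not absorbed into $\|\cdot\|_A$ but into that positive zeroth-order term — so in \eqref{2.9} I only absorb the part that genuinely fits $\langle v\rangle^{1+\gamma/2}$-weights into $\tfrac18\|\cdot\|_A^2$ and leave the rest for the global energy estimate. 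Concretely, the plan for part (b) is: (1) compute $[\nabla_{\mathcal H_+}^m,\omega_t]$ and $\nabla_v(\log\omega_t)$ exactly; (2) use $b\le 2$ to bound all $\langle v\rangle$-powers that arise; (3) split each resulting term by $\mathbf P_v$/$(\mathbf I-\mathbf P_v)$ and Cauchy–Schwarz with a small parameter to land in $\tfrac18\|\omega_t\nabla_{\mathcal H_+}^m f\|_A^2$ plus the two displayed sums; (4) carefully bundle the binomial coefficients and factorials, checking $\binom mp\sqrt{(m-p+2)!}$ is what survives. The one spot demanding real care is ensuring every lower-order term ($p<m$, or $p=m$ with one $\partial_v$ moved onto $f$) is written with $\|\omega_t\nabla_{\mathcal H_+}^p f\|_A$ or $\|\omega_t\nabla_{\mathcal H_+}^{p-1}f\|_A$ (not an $L^2$ or a different weight), since that is exactly the form the induction in the next section will consume.
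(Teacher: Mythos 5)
Your plan for \eqref{2.7} and \eqref{2.8} follows the paper's route (exact Leibniz formula $\nabla^m_{\mathcal H_+}\Gamma(f,f)=\sum_p\binom mp\Gamma(\nabla^p_{\mathcal H_+}f,\nabla^{m-p}_{\mathcal H_+}f)$, reduction of $\mathcal L_2$ to $\Gamma(\cdot,\sqrt\mu)$ with an explicit Hermite computation of $\|\omega_t\nabla^{m-p}_{\mathcal H_+}\sqrt\mu\|_A$ — where, note, $b\le2$ alone is not enough: one also needs $\frac{c_0}{1+t}\le c_0\le\frac14$ for $\omega_t\mu^{1/2}$ to be integrable). But there is a genuine gap in how you handle every commutator with the weight $\omega_t$, and it surfaces exactly where you flag it in part (b) of \eqref{2.9}. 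Your structural fact (ii) — that a derivative hitting $\omega_t$ costs $\langle v\rangle^{b-1}\lesssim\langle v\rangle$ and is ``exactly absorbable'' — is false: the convolution coefficient already carries $\langle v\rangle^{\gamma+2}$, two copies of $\|\cdot\|_A$ buy at most $\langle v\rangle^{2+\gamma}$ (and only $\langle v\rangle^{1+\gamma}$ when the $\mathbf P_v$-part of a gradient is involved), so the extra $\langle v\rangle^{b-1}$ cannot be absorbed; your own computation $b\le 1+\gamma$ confirms this. Your proposed fix — absorbing the excess into the positive term $\frac{c_0}{(1+t)^2}\|\langle v\rangle^{b/2}\omega_t\nabla^m_{\mathcal H_+}f\|^2$ from the energy identity — does not prove \eqref{2.9} as stated (the proposition is a standalone commutator bound with no such term on its right-hand side), and it could not work anyway: the problematic terms pair $\omega_t\nabla^m_{\mathcal H_+}f$ against a full gradient $A_{-,j}(\omega_t\nabla^m_{\mathcal H_+}f)$ with weight $\langle v\rangle^{\gamma+b+1}$, which a zeroth-order quantity with weight $\langle v\rangle^{b/2}$ cannot control.

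The missing idea is the algebraic cancellation \eqref{sum=0-1}, $\sum_k a_{jk}(w)w_k=0$. Since $\partial_k\omega_t=\frac{bc_0}{1+t}\,v_k\langle v\rangle^{b-2}\omega_t$, the factor $v_k$ produced by differentiating the weight contracts with the index of $a_{jk}$ and can be transferred \emph{inside} the convolution onto the Gaussian, e.g. $\sum_k\big(a_{jk}*(\sqrt\mu F)\big)v_k=\sum_k a_{jk}*\big(v_k\sqrt\mu F\big)$, where the extra $v_k$ is harmless. What survives outside is only $\langle v\rangle^{b-2}\le1$ (this is where $b\le2$ is used), so the total external weight stays at $\langle v\rangle^{\gamma+2}$ and the $p=m$ contributions are absorbed into $\frac18\|\omega_t\nabla^m_{\mathcal H_+}f\|^2_A$ via \eqref{2-1}--\eqref{2-1-1} and the smallness of $bc_0$ relative to $C_1$. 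The same cancellation is already needed for \eqref{2.7}: the weighted trilinear bound $|(\omega_t\Gamma(f,g),\omega_t h)|\le C_4\|f\|_{2,\gamma/2}\|\omega_t g\|_A\|\omega_t h\|_A$ (the paper's Lemma~\ref{commutator-nonlinear}) is obtained by writing the weight commutator as five explicit terms $\Gamma_1,\dots,\Gamma_5$ and applying \eqref{sum=0-1} to each, not by the crude bound $|\nabla\omega_t|\lesssim\langle v\rangle^{b-1}\omega_t$. Your remark that $\nabla\omega_t/\omega_t$ is parallel to $v$ is the right observation, but the conclusion to draw from it is this contraction with $a_{jk}$, not a projection onto $\mathbf P_v$. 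Finally, for the $p<m$ terms of $I_1$ the paper also needs the decomposition $\{|v|\le1\}\cup\{2|v'|\ge|v|\}\cup\{2|v'|\le|v|\}$ with a second-order Taylor expansion of $a_{jk}(v-v')$ and the anisotropic bound \eqref{2-1-1} on the third region; a single crude bound on $a_{jk}*(\cdot)$ does not suffice there either.
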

We will prove this Proposition in the next section.

\section{Analysis of Landau operators}\label{section commutators}

In this section, we give exact proofs of Proposition \ref{prop2.1}, which can be used directly in Section \ref{sect6}. In the following, we use the convention of implicit summation over repeated indices. We begin with proving \eqref{2.7}. Reviewing that
\begin{lemma}(\cite{LX-5})\label{representations}
For $f, g\in\mathcal S(\mathbb R^{3}_{v})$, we have
$$\mathcal L_{1}f=A_{+, j}\left((a_{jk}*\mu)A_{-, k}f\right),\ \mathcal L_{2}f=-A_{+, j}\left(\sqrt\mu(a_{jk}*(\sqrt\mu A_{-, k}f))\right),$$
$$\Gamma(f, g)=A_{+, j}\left((a_{jk}*(\sqrt\mu f))A_{+, k}g\right)-A_{+, j}\left((a_{jk}*(\sqrt\mu A_{+, k}f))g\right).$$
\end{lemma}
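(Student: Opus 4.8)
The plan is to obtain all three identities by a direct computation from the definitions \eqref{Q}--\eqref{linear operator}, resting on two elementary ingredients. First, the \emph{conjugation rule} for the Gaussian weight: since $\partial_{v_k}\sqrt\mu=-\tfrac{v_k}{2}\sqrt\mu$, one has $\partial_{v_k}(\sqrt\mu\,h)=-\sqrt\mu\,A_{+,k}h$, equivalently $\mu^{-1/2}\partial_{v_k}(\mu^{1/2}\,\cdot\,)=-A_{+,k}$; taking $h=1$ gives $A_{+,k}\sqrt\mu=v_k\sqrt\mu$, and as operators $A_{+,k}+A_{-,k}=v_k$, hence $A_{-,k}=v_k-A_{+,k}$. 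Second, the \emph{structural cancellation} of the Landau kernel: for every $j$ and every $z\in\mathbb R^3$, $\sum_{k=1}^3 a_{jk}(z)z_k=0$, which is immediate from \eqref{matrix A} since $a(z)$ equals $|z|^{\gamma+2}$ times the orthogonal projection onto $z^{\perp}$. Throughout, $f,g\in\mathcal S(\mathbb R^3_v)$, so Fubini's theorem and integration by parts in $v$ and $v_*$ are unproblematic.

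\emph{Step 1: the formula for $\Gamma$.} Writing $\Gamma(f,g)(v)=\mu(v)^{-1/2}Q(\sqrt\mu f,\sqrt\mu g)(v)$ and inserting $G(v_*)=\sqrt{\mu(v_*)}f(v_*)$, $F(v)=\sqrt{\mu(v)}g(v)$ into \eqref{Q}, the conjugation rule turns $\partial_{v_k}F(v)$ into $-\sqrt{\mu(v)}\,A_{+,k}g(v)$ and $\partial_{v_{*,k}}G(v_*)$ into $-\sqrt{\mu(v_*)}\,A_{+,k}f(v_*)$ (with $A_{+,k}$ acting in $v_*$). Since $\sqrt{\mu(v)}$ is constant in $v_*$, it factors out of the inner integral, producing the convolutions $a_{jk}*(\sqrt\mu f)$ and $a_{jk}*(\sqrt\mu A_{+,k}f)$; finally applying $\mu^{-1/2}\partial_{v_j}(\mu^{1/2}\,\cdot\,)=-A_{+,j}$ yields precisely $\Gamma(f,g)=A_{+,j}\big((a_{jk}*(\sqrt\mu f))A_{+,k}g\big)-A_{+,j}\big((a_{jk}*(\sqrt\mu A_{+,k}f))g\big)$.

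\emph{Step 2: the formulas for $\mathcal L_1$ and $\mathcal L_2$.} For $\mathcal L_1f=-\Gamma(\sqrt\mu,f)$ I substitute $h=\sqrt\mu$ into the first slot of the identity just proved; using $\sqrt\mu\cdot\sqrt\mu=\mu$ and $\sqrt\mu\,A_{+,k}\sqrt\mu=v_k\mu$ this gives $\mathcal L_1f=-A_{+,j}\big((a_{jk}*\mu)A_{+,k}f\big)+A_{+,j}\big((a_{jk}*(v_k\mu))f\big)$. It remains to move the scalar factor $v_k$ from outside the convolution to inside it, i.e.\ $\sum_k v_k(a_{jk}*\mu)=\sum_k a_{jk}*(v_k\mu)$, which follows from $\sum_k\big(v_k-v_{*,k}\big)a_{jk}(v-v_*)=\sum_k(v-v_*)_k\,a_{jk}(v-v_*)=0$. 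Combining the two $A_{+,j}$-terms and using $A_{-,k}=v_k-A_{+,k}$ reassembles $(a_{jk}*\mu)A_{-,k}f$, giving $\mathcal L_1f=A_{+,j}\big((a_{jk}*\mu)A_{-,k}f\big)$. For $\mathcal L_2f=-\Gamma(f,\sqrt\mu)$ I substitute $g=\sqrt\mu$ into the second slot; using $A_{+,k}\sqrt\mu=v_k\sqrt\mu$ gives $\mathcal L_2f=-A_{+,j}\big((a_{jk}*(\sqrt\mu f))\,v_k\sqrt\mu\big)+A_{+,j}\big(\sqrt\mu\,(a_{jk}*(\sqrt\mu A_{+,k}f))\big)$. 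Factoring out $\sqrt\mu$, moving $v_k$ inside the convolution exactly as above ($\sum_k v_k(a_{jk}*(\sqrt\mu f))=\sum_k a_{jk}*(v_k\sqrt\mu f)$), and then using $v_k\sqrt\mu f-\sqrt\mu A_{+,k}f=\sqrt\mu(v_k-A_{+,k})f=\sqrt\mu\,A_{-,k}f$, a short computation reassembles the two terms into $\mathcal L_2f=-A_{+,j}\big(\sqrt\mu\,(a_{jk}*(\sqrt\mu A_{-,k}f))\big)$, as claimed.

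The only genuinely delicate point is bookkeeping: reading each $\partial_k$ in \eqref{Q} as a derivative in the correct variable ($v$ versus $v_*$), and making sure the summation over $k$ is in place before invoking $\sum_k a_{jk}(z)z_k=0$; everything else is routine manipulation of Gaussians and convolutions.
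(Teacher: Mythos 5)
Your computation is correct and complete. The paper itself offers no proof of this lemma --- it is simply quoted from \cite{LX-5} --- so there is no internal argument to compare against; your direct verification from \eqref{Q} and \eqref{linear operator} is the standard one and is exactly what the cited reference does. The two ingredients you isolate are the right ones: the conjugation identity $\mu^{-1/2}\partial_{v_k}(\mu^{1/2}\,\cdot\,)=-A_{+,k}$ (equivalently $A_{+,k}\sqrt\mu=v_k\sqrt\mu$), and the kernel cancellation $\sum_k a_{jk}(z)z_k=0$, which is what lets you trade $\sum_k v_k(a_{jk}*h)$ for $\sum_k a_{jk}*(v_kh)$ and hence convert the $A_{+,k}$ appearing in the $\Gamma$-formula into the $A_{-,k}$ appearing in the formulas for $\mathcal L_1$ and $\mathcal L_2$. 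Note also that this same cancellation identity is the one the paper records as \eqref{sum=0-1} and uses repeatedly in Section 3, so your argument is fully consistent with the paper's toolkit. The only caution I would add is the one you already flag: the sum over $k$ must be in place before invoking the cancellation, since $\sum_j \bar a_{jk}v_j\neq 0$ termwise; your write-up respects this.
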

\begin{lemma}(\cite{LX-5})\label{Leibniz-type}
For any $m\in\mathbb N$, we have
$$\nabla^{m}_{\mathcal H_{+}}\left[\left(a_{jk}*\left(\sqrt\mu F\right)\right)G\right]=\sum_{p=0}^{m}\binom{m}{p}\left(a_{jk}*\left(\sqrt\mu \nabla^{p}_{\mathcal H_{+}}F\right)\right)\nabla^{m-p}_{\mathcal H_{+}}G.$$
\end{lemma}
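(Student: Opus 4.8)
The plan is to reduce the statement to its first-order ($m=1$) case and then propagate it by induction on $m$. Recalling that $\nabla_{\mathcal H_{+}}=(A_{+,1},A_{+,2},A_{+,3})$, the $m=1$ case amounts to the Leibniz-type identity, valid for each fixed $l\in\{1,2,3\}$ and fixed $j,k$,
\begin{equation*}
A_{+,l}\left[\left(a_{jk}*(\sqrt\mu F)\right)G\right]=\left(a_{jk}*(\sqrt\mu\, A_{+,l}F)\right)G+\left(a_{jk}*(\sqrt\mu F)\right)A_{+,l}G .
\end{equation*}
Once this is established, the general formula follows by iterating it, using that the operators $A_{+,1},A_{+,2},A_{+,3}$ pairwise commute (so that $\nabla^{m}_{\mathcal H_{+}}$ is unambiguous and the terms may be freely reordered) and that the coefficient $\binom{m}{p}$ is exactly the combinatorial factor produced when a first-order Leibniz rule is applied $m$ times.

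For the $m=1$ identity, the decisive elementary observation is the conjugation relation
\begin{equation*}
\partial_{v_{l}}(\sqrt\mu\, H)=-\sqrt\mu\, A_{+,l}H ,\qquad\text{equivalently}\qquad \mu^{-\frac12}\,\partial_{v_{l}}\bigl(\sqrt\mu\,\cdot\bigr)=\partial_{v_{l}}+\partial_{v_{l}}\log\sqrt\mu=\partial_{v_{l}}-\frac12 v_{l}=-A_{+,l}.
\end{equation*}
Writing $u=a_{jk}*(\sqrt\mu F)$ and transferring the $v$-derivative through the convolution onto the integration variable $v_{*}$ (legitimate because $a_{jk}$ and its first derivatives have at most polynomial growth while $\sqrt\mu\,F$ decays rapidly, $F\in\mathcal S(\mathbb R^{3})$), one obtains
\begin{equation*}
\partial_{v_{l}}u=a_{jk}*\bigl(\partial_{v_{l}}(\sqrt\mu F)\bigr)=-\,a_{jk}*(\sqrt\mu\, A_{+,l}F),
\end{equation*}
whence
\begin{equation*}
A_{+,l}u=\frac12 v_{l}u-\partial_{v_{l}}u=\frac12 v_{l}u+a_{jk}*(\sqrt\mu\, A_{+,l}F).
\end{equation*}
On the other hand $A_{+,l}$ is not a derivation: a direct computation gives $A_{+,l}(uG)=(A_{+,l}u)G+u\,A_{+,l}G-\frac12 v_{l}uG$. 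Substituting the formula just obtained for $A_{+,l}u$, the spurious term $\frac12 v_{l}uG$ coming from the failure of the Leibniz rule cancels precisely the extra term $\frac12 v_{l}u$ created by passing $A_{+,l}$ through the convolution, leaving exactly the claimed first-order identity.

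With the $m=1$ identity available, I would argue by induction on $m$. Assuming the formula at order $m$, apply one further copy of $\nabla_{\mathcal H_{+}}$ and use the first-order identity on each summand $\binom{m}{p}\bigl(a_{jk}*(\sqrt\mu\,\nabla^{p}_{\mathcal H_{+}}F)\bigr)\nabla^{m-p}_{\mathcal H_{+}}G$ — legitimate since $\nabla^{p}_{\mathcal H_{+}}F$ is again a Schwartz function and $a_{jk}*(\sqrt\mu\,\cdot)$ has the same structure as before. The new operator either lands inside the convolution on $F$, raising the index from $p$ to $p+1$, or on $G$; collecting the two contributions to a given index via Pascal's identity $\binom{m}{p-1}+\binom{m}{p}=\binom{m+1}{p}$ yields the formula at order $m+1$.

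The main obstacle is concentrated in the first-order step, and more precisely in the bookkeeping there: the two natural guesses — that $A_{+,l}$ commutes with the operation $F\mapsto a_{jk}*(\sqrt\mu F)$, and that $A_{+,l}$ obeys the ordinary product rule — are individually false, yet their error terms are both equal to $\frac12 v_{l}u$ and cancel. Making this cancellation explicit, together with a clean justification of the integration by parts that transfers the derivative through the convolution (which is where the rapid decay of $\sqrt\mu$ against the polynomial growth of $a_{jk}$ is used), is the only delicate point; the remaining induction is routine.
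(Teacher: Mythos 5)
Your proof is correct: the conjugation identity $\partial_{v_l}(\sqrt\mu H)=-\sqrt\mu A_{+,l}H$, the exact cancellation of the two $\tfrac12 v_l u G$ error terms in the first-order step, and the commutativity of the $A_{+,l}$ plus Pascal's identity in the induction all check out. The paper itself only cites this lemma from \cite{LX-5} without proof, and your argument is the standard one used there, so there is nothing further to compare.
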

In reviewing the above representations for Landau operators and the Leibniz-type formula, we need the following commutator between the nonlinear Landau operator $\Gamma$ and weights $\omega_{t}$.
\begin{lemma}\label{commutator-nonlinear}
     Let $-3<\gamma<0$, then there exists a constant $C_{4}>0$, which depends on $\gamma, \ b$ and $c_{0}$, such that for any suitable functions $f, g$ and $h$
\begin{align*}
     \left|\left(\omega_{t}\Gamma(f, g), \omega_{t} h\right)_{L^{2}(\mathbb R^{3})}\right|\le C_{4}\left\|f\right\|_{2, \frac\gamma2}\left\|\omega_{t} g\right\|_{A}\left\|\omega_{t} h\right\|_{A}.
\end{align*}
\end{lemma}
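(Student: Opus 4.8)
The plan is to reduce the weighted trilinear estimate for $\Gamma$ with the exponential weight $\omega_t$ to the unweighted trilinear estimate of Proposition \ref{trilinear} by commuting $\omega_t$ past the operator $\Gamma$ and absorbing the commutator terms. Using the representation in Lemma \ref{representations},
$$
\Gamma(f,g)=A_{+,j}\bigl((a_{jk}*(\sqrt\mu f))A_{+,k}g\bigr)-A_{+,j}\bigl((a_{jk}*(\sqrt\mu A_{+,k}f))g\bigr),
$$
so that $\bigl(\omega_t\Gamma(f,g),\omega_t h\bigr)_{L^2}$ involves terms of the form $\bigl(\omega_t A_{+,j}(\cdots),\omega_t h\bigr)$. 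Integrating by parts moves $A_{+,j}$ onto $\omega_t^2 h$; since $A_{+,j}=\frac12 v_j-\partial_{v_j}$ does not commute with $\omega_t$, I would write $A_{+,j}(\omega_t^2 h)=\omega_t^2 A_{+,j}h-2\omega_t(\partial_{v_j}\omega_t)h$ (here $A_{+,j}^{*}=A_{-,j}$ acting against $\omega_t^2\,dv$, but it is cleanest to just expand). The main point is that
$$
\partial_{v_j}\omega_t=\frac{c_0}{1+t}\,b\langle v\rangle^{b-2}v_j\,\omega_t,
$$
so the "error" factor $\partial_{v_j}\omega_t/\omega_t$ is bounded by $C(b,c_0)\langle v\rangle^{b-1}$, and in particular by a constant when $b\le 1$, and by $C\langle v\rangle$ when $b\le 2$.

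The key steps, in order, are: (i) write out $\bigl(\omega_t\Gamma(f,g),\omega_t h\bigr)_{L^2}$ using Lemma \ref{representations} and integrate by parts in $v_j$, producing a "principal" part in which $\omega_t$ sits inside next to $g$ (or $A_{+,k}g$) and $h$, plus a "commutator" part carrying one factor of $\partial_{v_j}\omega_t/\omega_t$; (ii) for the principal part, apply Proposition \ref{trilinear} directly with the substitution $g\mapsto \omega_t g$, $h\mapsto\omega_t h$ — more precisely, recognize that the principal part equals $\bigl(\Gamma(f,\omega_t g),\omega_t h\bigr)_{L^2}$ up to further commutators coming from $A_{+,k}$ hitting $\omega_t$ inside $a_{jk}*(\sqrt\mu A_{+,k}f)$ is not an issue since the derivative there falls on $f$, not on $g$ or $h$; one must also handle $A_{+,k}(\omega_t g)=\omega_t A_{+,k}g-(\partial_{v_k}\omega_t/\omega_t)\cdot\omega_t g$, generating yet another commutator term with an extra $\langle v\rangle^{b-1}$; (iii) bound the convolution kernels: by the singular-integral estimate \eqref{singular integration}, $|a_{jk}*(\sqrt\mu F)(v)|\lesssim \langle v\rangle^{\gamma+2}\|F\|_{\cdots}$ pointwise, with the gain coming from the Gaussian weight $\sqrt\mu$ in the convolution and the fact that $a_{jk}(w)\sim|w|^{\gamma+2}$; (iv) collect all commutator terms, each of which has the schematic form $\int (a_{jk}*(\sqrt\mu f))\,(\partial\omega_t/\omega_t)\,(\omega_t g)(\omega_t h)\,dv$ or similar, and estimate the extra factor $\langle v\rangle^{b-1}$ (resp.\ $\langle v\rangle^{b}$ if two weight-derivatives appear) against the $\|\cdot\|_A$-norms, which by \eqref{2-2} control $\|\cdot\|_{2,1+\gamma/2}$ and $\|\nabla\cdot\|_{2,\gamma/2}$; (v) combine to get the stated bound with $C_4=C_4(\gamma,b,c_0)$.

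The main obstacle is step (iv): controlling the commutator terms, where the weight-derivative contributes a positive power $\langle v\rangle^{b-1}$ (or up to $\langle v\rangle^{b}$ when $b=2$ and two derivatives land on $\omega_t$), against norms that only give decay $\langle v\rangle^{1+\gamma/2}$ from $\|\cdot\|_A$. One needs the convolution factor to supply enough decay: $|a_{jk}*(\sqrt\mu f)(v)|\lesssim \langle v\rangle^{\gamma+2}\|f\|_{2,\gamma/2}$ is too lossy, so instead I would exploit the structure $a_{jk}(v-v_*)w_jw_k$ projected appropriately — the matrix $a$ annihilates the radial direction, which is exactly why $\|\cdot\|_A$ distinguishes $\mathbf P_v$ and $(\mathbf I-\mathbf P_v)$ in \eqref{2-1} — together with the bound $|a_{jk}*(\sqrt\mu f)| \lesssim \langle v \rangle^{\gamma+2}$ refined to $\langle v\rangle^{\gamma}$ on the components contracted with $v$. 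Since $b\le 2$ gives $b-1\le 1$ and $\gamma<0$, the budget $\langle v\rangle^{b-1}\cdot\langle v\rangle^{\gamma+2}$ against two $\|\cdot\|_A$ factors (each carrying $\langle v\rangle^{1+\gamma/2}$ worth of weight, with a derivative available) closes precisely when one is careful to place the $\langle v\rangle^{b-1}$ on a factor that also carries a derivative; this is where the calculation is delicate and where following the soft-potential bookkeeping of \cite{L-4} with the extra weight is essential. Once the powers are seen to balance, Cauchy–Schwarz and \eqref{2-2}–\eqref{2-2-1} finish the estimate.
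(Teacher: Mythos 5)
Your plan is essentially the paper's proof: the main term is $\left(\Gamma(f,\omega_t g),\omega_t h\right)_{L^2}$ handled directly by Proposition \ref{trilinear}, while the commutator terms each carry a factor $\partial_j\omega_t=\frac{bc_0}{1+t}v_j\langle v\rangle^{b-2}\omega_t$ whose dangerous $v_j$ is contracted against $a_{jk}$ and transferred into the convolution via the identity \eqref{sum=0-1} (so $a_{jk}*(\sqrt\mu f)\,v_j$ becomes $a_{jk}*(v_j\sqrt\mu f)$, harmless against the Gaussian) --- precisely the radial-annihilation mechanism you invoke in step (iv). The one point where you are more pessimistic than necessary is the power counting: after this transfer the leftover weight is $\langle v\rangle^{b-2}\le 1$ (or $\langle v\rangle^{2(b-2)}$ when two weight-derivatives appear), not $\langle v\rangle^{b-1}$, so nothing needs to be absorbed by the $\|\cdot\|_A$ norms beyond what the unweighted trilinear bookkeeping of \cite{L-4} already provides.
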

\begin{proof}
Since
$$\left(\omega_{t}\Gamma(f, g), \omega_{t} h\right)_{L^{2}(\mathbb R^{3})}=\left(\omega_{t}\Gamma(f, g), \omega_{t} h\right)_{L^{2}(\mathbb R^{3})}-\left(\Gamma(f, \omega_{t} g), \omega_{t} h\right)_{L^{2}(\mathbb R^{3})}+\left(\Gamma(f, \omega_{t} g), \omega_{t} h\right)_{L^{2}(\mathbb R^{3})},$$
for the third term, it can be estimated from Proposition \ref{trilinear} directly, so we only need to consider estimates of the first two terms. Using integration by parts, we obtain that
\begin{align*}
     &\left(\omega_{t}\Gamma(f, g), \omega_{t} h\right)_{L^{2}(\mathbb R^{3})}-\left(\Gamma(f, \omega_{t} g), \omega_{t} h\right)_{L^{2}(\mathbb R^{3})}\\
     &=\int_{\mathbb R^{3}}\left(a_{jk}*\left(\sqrt\mu f\right)\right)\partial_{k}\omega_{t}\cdot g\cdot\partial_{j}(\omega_{t} h) dv-\int_{\mathbb R^{3}}\left(a_{jk}*\left(\sqrt\mu f\right)\right)\partial_{j}\omega_{t}\cdot \partial_{k}g\cdot(\omega_{t} h) dv\\
     &\quad+\int_{\mathbb R^{3}}\left(a_{jk}*\left(\frac{v_{j}}{2}\sqrt\mu f\right)\right)\partial_{k}\omega_{t}\cdot g\cdot(\omega_{t} h) dv+\int_{\mathbb R^{3}}\left(a_{jk}*\left(\frac{v_{k}}{2}\sqrt\mu f\right)\right)\partial_{j}\omega_{t}\cdot g\cdot(\omega_{t} h) dv\\
     &\quad+\int_{\mathbb R^{3}}\left(\partial_{k}a_{jk}*\left(\sqrt\mu f\right)\right)\partial_{j}\omega_{t}\cdot g\cdot(\omega_{t} h) dv=\Gamma_{1}+\Gamma_{2}+\Gamma_{3}+\Gamma_{4}+\Gamma_{5}.
\end{align*}
Since $\partial_{j}\omega_{t}(v)=\frac{bc_{0}}{1+t}v_{j}\langle v\rangle^{b-2}\omega_{t}(v)$ for $1\le j\le 3$, then by using
\begin{align}\label{sum=0-1}
     \sum_{j}a_{jk}v_{j}=\sum_{k}a_{jk}v_{k}=\sum_{j, k}\partial_{l}a_{jk}v_{j}v_{k}=0,
\end{align}
the following formulas hold:
\begin{align*}
     \Gamma_{1}=\frac{bc_{0}}{1+t}\int_{\mathbb R^{3}}\left(a_{jk}*\left(v_{k}\sqrt\mu f\right)\right)\langle v\rangle^{b-2}\cdot(\omega_{t} g)\cdot\partial_{j}(\omega_{t} h),
\end{align*}
\begin{align*}
     \Gamma_{2}&=-\frac{bc_{0}}{1+t}\int_{\mathbb R^{3}}\left(a_{jk}*\left(v_{j}\sqrt\mu f\right)\right)\langle v\rangle^{b-2}\cdot\partial_{k}(\omega_{t} g)\cdot(\omega_{t} h)\\
     &\quad+\left(\frac{bc_{0}}{1+t}\right)^{2}\int_{\mathbb R^{3}}\left(a_{jk}*\left(v_{j}v_{k}\sqrt\mu f\right)\right)\langle v\rangle^{2(b-2)}\cdot(\omega_{t} g)\cdot(\omega_{t} h),
\end{align*}
\begin{align*}
     \Gamma_{3}=\Gamma_{4}=\frac{bc_{0}}{2(1+t)}\int_{\mathbb R^{3}}\left(a_{jk}*\left(v_{j}v_{k}\sqrt\mu f\right)\right)\langle v\rangle^{b-2}\cdot(\omega_{t} g)\cdot(\omega_{t} h),
\end{align*}
\begin{align*}
     \Gamma_{5}
     &=\frac{bc_{0}}{1+t}\int_{\mathbb R^{3}}\left(\partial_{k}a_{jk}*\left(v_{j}\sqrt\mu f\right)\right)\langle v\rangle^{b-2}\cdot(\omega_{t} g)\cdot(\omega_{t} h)\\
     &\quad-\frac{bc_{0}}{1+t}\int_{\mathbb R^{3}}\left(a_{jk}*\left(\delta_{jk}\sqrt\mu f\right)\right)\langle v\rangle^{b-2}\cdot(\omega_{t} g)\cdot(\omega_{t} h).
\end{align*}
Since $0<b\le2$, these terms can be treated as hard potential cases in the proof of the trilinear estimate to~\cite{L-4}, then we have
\begin{align*}
     \left|\Gamma_{j}\right|\lesssim\left\|f\right\|_{2, \frac\gamma2}\left\|\omega_{t} g\right\|_{A}\left\|\omega_{t} h\right\|_{A}, \quad j=1, \cdots, 5.
\end{align*}
Combining Proposition \ref{trilinear}, there exists a constant $C_{4}>0$, which depends on $\gamma,\ b$ and $c_{0}$, such that
\begin{align*}
     \left|\left(\omega_{t}\Gamma(f, g), \omega_{t} h\right)_{L^{2}(\mathbb R^{3})}\right|\le C_{4}\left\|f\right\|_{2, \frac\gamma2}\left\|\omega_{t} g\right\|_{A}\left\|\omega_{t} h\right\|_{A}.
\end{align*}
\end{proof}

\bigskip
\textbf{Proof of \eqref{2.7}.} \  From the representation of $\Gamma$ and Lemma \ref{Leibniz-type}, one has for all $m\in\mathbb N$
\begin{align} \label{Leibniz-nonlinear}
     \nabla^{m}_{\mathcal H_{+}}\Gamma(f, f)=\sum_{p=0}^{m}\binom{m}{p}\Gamma\left(\nabla^{p}_{\mathcal H_{+}}f, \nabla^{m-p}_{\mathcal H_{+}}f\right).
\end{align}
Then applying Lemma \ref{commutator-nonlinear}, we can obtain that
\begin{align*}
     \left|\left(\omega_{t}\nabla^{m}_{\mathcal H_{+}}\Gamma(f, f), \omega_{t} \nabla^{m}_{\mathcal H_{+}}f\right)_{L^{2}(\mathbb R^{3})}\right| &\le\sum_{p=0}^{m}\binom{m}{p}\left|\left(\omega_{t}\Gamma\left(\nabla^{p}_{\mathcal H_{+}}f, \nabla^{m-p}_{\mathcal H_{+}}f\right), \omega_{t} \nabla^{m}_{\mathcal H_{+}}f\right)_{L^{2}(\mathbb R^{3})}\right|\\
     &\le C_{4}\sum_{p=0}^{m}\binom{m}{p}\left\|\nabla^{p}_{\mathcal H_{+}}f\right\|_{2, \frac\gamma2}\left\|\omega_{t} \nabla^{m-p}_{\mathcal H_{+}}f\right\|_{A}\left\|\omega_{t} \nabla^{m}_{\mathcal H_{+}}f\right\|_{A}.
\end{align*}

\bigskip
\textbf{Proof of \eqref{2.8}.} \ Since $\mathcal L_{2}f=-\Gamma(f, \sqrt\mu)$, then follows immediately from \eqref{2.7} that
\begin{align*}
     &\left|\left(\omega_{t}\nabla^{m}_{\mathcal H_{+}}\mathcal L_{2}f, \omega_{t} \nabla^{m}_{\mathcal H_{+}}f\right)_{L^{2}(\mathbb R^{3})}\right|\le C_{4}\sum_{p=0}^{m}\binom{m}{p}\left\|\nabla^{p}_{\mathcal H_{+}}f\right\|_{2, \frac\gamma2}\left\|\omega_{t} \nabla^{m-p}_{\mathcal H_{+}}\sqrt\mu\right\|_{A}\left\|\omega_{t} \nabla^{m}_{\mathcal H_{+}}f\right\|_{A}.
\end{align*}
To estimate $\left\|\omega_{t} \nabla^{m-p}_{\mathcal H_{+}}\sqrt\mu\right\|^{2}_{A}$, we can write it as
\begin{align*}
     \left\|\omega_{t} \nabla^{m-p}_{\mathcal H_{+}}\sqrt\mu\right\|_{A}^{2}=\sum_{|\alpha|=m-p}\frac{(m-p)!}{\alpha!}\left\|\omega_{t} A_{+}^{\alpha}\sqrt\mu\right\|_{A}^{2}=\sum_{|\alpha|=m-p}\frac{(m-p)!}{\alpha!}\left\|\omega_{t}\mu^{-\frac12}\partial^{\alpha}_{v}\mu\right\|_{A}^{2}.
\end{align*}
Since
\begin{align*}
     \partial^{\alpha}_{v}\mu(v)=\prod_{q=1}^{3}\sum_{l=0}^{[\alpha_{q}/2]}\frac{\alpha_{q}!}{(\alpha_{q}-2l)!l!}(-1)^{\alpha_{q}-l}\left(\frac12\right)^{l}v_{q}^{\alpha_{q}-2l}\mu(v),
\end{align*}
this leads to
\begin{align*}
     \left\|\omega_{t}\mu^{-\frac12}\partial^{\alpha}_{v}\mu\right\|_{A}^{2}&\lesssim\left\|\langle\cdot\rangle^{\gamma+2}\omega_{t}\mu^{-\frac12}\partial^{\alpha+e_{j}}_{v}\mu\right\|_{L^{2}(\mathbb R^{3})}^{2}+\left\|\langle\cdot\rangle^{\gamma+4}\omega_{t}\mu^{-\frac12}\partial^{\alpha}_{v}\mu\right\|_{L^{2}(\mathbb R^{3})}^{2}\\
     &\lesssim2^{m-p+1}\left[\frac{m-p+1}{2}\right]!\left\|\langle\cdot\rangle^{\gamma+3+m-p}\omega_{t}\mu^{\frac12}\right\|_{L^{2}(\mathbb R^{3})}^{2}\\
     &\quad+2^{m-p}\left[\frac{m-p}{2}\right]!\left\|\langle\cdot\rangle^{\gamma+4+m-p}\omega_{t}\mu^{\frac12}\right\|_{L^{2}(\mathbb R^{3})}^{2}\\
     &\lesssim\left(\frac{2}{\frac14-\frac{c_{0}}{1+t}}\right)^{m-p}(m-p+2)!,
\end{align*}
here $\frac{c_{0}}{1+t}\le c_{0}\le\frac14$, and we use the fact that $p!q!\le(p+q)!\le2^{p+q}p!q!$, one has
$$\sum_{|\alpha|=m-p}\frac{(m-p)!}{\alpha!}\le2^{m-p}\frac{(m-p+1)(m-p+2)}{2},$$
then, we can obtain that
\begin{align*}
     \left\|\omega_{t} \nabla^{m-p}_{\mathcal H_{+}}\sqrt\mu\right\|_{A}^{2}\le\left(C_{5}\right)^{m-p}(m-p+3)!.
\end{align*}
Therefore, there exist positive constants $C_{5}$ and $C_{6}$, depend on $\gamma$, $b$ and $c_{0}$, such that
\begin{align*}
     &\left|\left(\omega_{t}\nabla^{m}_{\mathcal H_{+}}\mathcal L_{2}f, \omega_{t} \nabla^{m}_{\mathcal H_{+}}f\right)_{L^{2}(\mathbb R^{3})}\right|\le C_{6}\sum_{p=0}^{m}\binom{m}{p}\left(\sqrt{C_{5}}\right)^{m-p}\sqrt{(m-p+3)!}\left\|\nabla^{p}_{\mathcal H_{+}}f\right\|_{2, \frac\gamma2}\left\|\omega_{t} \nabla^{m}_{\mathcal H_{+}}f\right\|_{A}.
\end{align*}
Specially, for $m=0$, we remark that
\begin{align}\label{L2-weighted}
     &\left|\left(\omega_{t}\mathcal L_{2}f, \omega_{t} f\right)_{L^{2}(\mathbb R^{3})}\right|\le C_{6}\left\|f\right\|_{2, \frac\gamma2}\left\|\omega_{t} f\right\|_{A}.
\end{align}

\bigskip
\textbf{Proof of \eqref{2.9}.} \ For all $m\in\mathbb N_{+}$, by using \eqref{sum=0-1} and Lemma \ref{Leibniz-type}, then follows from integration by parts that
\begin{align*}
     &\left([\omega_{t} \nabla_{\mathcal H_{+}}^{m},\ \mathcal L_{1}]f,\ \omega_{t} \nabla_{\mathcal H_{+}}^{m}f\right)_{L^{2}(\mathbb R^{3})}\\
     &=\sum_{p=0}^{m-1}\binom{m}{p}\int_{\mathbb R^{3}}\left(a_{jk}*\left(\sqrt\mu\nabla^{m-p}_{\mathcal H_{+}}\sqrt\mu\right)\right)A_{-, k}\left(\omega_{t}\nabla^{p}_{\mathcal H_{+}}f\right)A_{-, j}\left(\omega_{t} \nabla^{m}_{\mathcal H_{+}}f\right)\\
     &\quad+\frac{bc_{0}}{1+t}\sum_{p=0}^{m}\binom{m}{p}\int_{\mathbb R^{3}}\left(a_{jk}*\left(v_{k}\sqrt\mu\nabla^{m-p}_{\mathcal H_{+}}\sqrt\mu\right)\right)\langle v\rangle^{b-2}\omega_{t}\nabla^{p}_{\mathcal H_{+}}f\cdot A_{-, j}\left(\omega_{t} \nabla^{m}_{\mathcal H_{+}}f\right)\\
     &\quad+\frac{bc_{0}}{1+t}\sum_{p=0}^{m}\binom{m}{p}\int_{\mathbb R^{3}}\left(a_{jk}*\left(v_{j}\sqrt\mu\nabla^{m-p}_{\mathcal H_{+}}\sqrt\mu\right)\right)\langle v\rangle^{b-2}A_{-, k}\left(\omega_{t} \nabla^{p}_{\mathcal H_{+}}f\right)\cdot\omega_{t}\nabla^{m}_{\mathcal H_{+}}f\\
     &\quad-\left(\frac{bc_{0}}{1+t}\right)^{2}\sum_{p=0}^{m}\binom{m}{p}\int_{\mathbb R^{3}}\left(a_{jk}*\left(v_{j}v_{k}\sqrt\mu\nabla^{m-p}_{\mathcal H_{+}}\sqrt\mu\right)\right)\langle v\rangle^{2(b-2)}\omega_{t} \nabla^{p}_{\mathcal H_{+}}f\cdot\omega_{t}\nabla^{m}_{\mathcal H_{+}}f\\
     &\quad-\sum_{p=1}^{m}\binom{m}{p}p\int_{\mathbb R^{3}}\left(a_{jk}*\left(\sqrt\mu\nabla^{m-p}_{\mathcal H_{+}}\sqrt\mu\right)\right)\omega_{t}\nabla^{p-1}_{\mathcal H_{+}}f\cdot\left(\omega_{t}\nabla^{m}_{\mathcal H_{+}}f\right)\\
     &\quad-\frac{bc_{0}}{1+t}\sum_{p=1}^{m}\binom{m}{p}p\int_{\mathbb R^{3}}\left(a_{jk}*\left(v_{j}\sqrt\mu\nabla^{m-p}_{\mathcal H_{+}}\sqrt\mu\right)\right)\langle v\rangle^{b-2}\omega_{t}\nabla^{p-1}_{\mathcal H_{+}}f\cdot \omega_{t}\nabla^{m}_{\mathcal H_{+}}f\\
     &=I_{1}+I_{2}+I_{3}+I_{4}+I_{5}+I_{6},
\end{align*}
here we use
$$\nabla_{\mathcal H_{+}}^{p}A_{-, k}=A_{-, k}\nabla_{\mathcal H_{+}}^{p}-p\nabla_{\mathcal H_{+}}^{p-1}, \ p\ge1 \quad {\rm and} \quad \left[\omega_{t}, A_{\pm, k}\right]=\pm\partial_{k}\omega_{t}, \ 1\le k\le3.$$
For the term $I_{1}$, decomposing $\mathbb R^{3}\times\mathbb R^{3}$  into
\begin{equation*}
\left\{|v|\le1\right\}\cup\left\{2|v'|\ge|v|, |v|\ge1\right\}\cup\left\{2|v'|\le|v|, |v|\ge1\right\}=\Omega_{1}\cup\Omega_{2}\cup\Omega_{3}.
\end{equation*}
In $\Omega_{1}\cup\Omega_{2}$, we can deduce that
\begin{align*}
     \left|a_{jk}*\left(\sqrt\mu\nabla^{m-p}_{\mathcal H_{+}}\sqrt\mu\right)\right|\lesssim\left\|\nabla^{m-p}_{\mathcal H_{+}}\sqrt\mu\right\|_{L^{2}(\mathbb R^{3})}\langle v\rangle^{\gamma},
\end{align*}
where we use \eqref{singular integration}, the fact $|v|\le1$ in $\Omega_{1}$ and $\sqrt\mu(v')\lesssim e^{-\frac{|v|^{2}}{16}}$ in $\Omega_{2}$. Then follows immediately from the Cauchy-Schwarz inequality and \eqref{2-2-1} that
\begin{align*}
     \left|I_{1}\big|_{\Omega_{1}\cup\Omega_{2}}\right|\lesssim\sum_{p=0}^{m-1}\binom{m}{p}\left\|\nabla^{m-p}_{\mathcal H_{+}}\sqrt\mu\right\|_{L^{2}(\mathbb R^{3})}\left\|\omega_{t}\nabla^{p}_{\mathcal H_{+}}f\right\|_{A}\left\|\omega_{t}\nabla^{m}_{\mathcal H_{+}}f\right\|_{A}.
\end{align*}
In $\Omega_{3}$, from \eqref{sum=0-1} and Taylor's expansion
\begin{align*}
     a_{jk}(v-v')=a_{jk}(v)+\sum_{1\le l\le3}\partial_{l}a_{jk}(v)v'_{l}+ \frac12\sum_{1\le l, p\le3}\int_{0}^{1}\partial_{lp}a_{jk}(v-sv')dsv'_{l}v'_{p},
\end{align*}
we can get that
\begin{align*}
     &\int_{\Omega_{3}}\left(a_{jk}*\left(\sqrt\mu\nabla^{m-p}_{\mathcal H_{+}}\sqrt\mu\right)\right)A_{-, k}\left(\omega_{t}\nabla^{p}_{\mathcal H_{+}}f\right)A_{-, j}\left(\omega_{t} \nabla^{m}_{\mathcal H_{+}}f\right)dv\\
     &=\int_{\Omega_{3}}a_{jk}(v)\sqrt\mu(v')\nabla^{m-p}_{\mathcal H_{+}}\sqrt\mu(v')(\mathbf I-\mathbf P_v)A_{-, k}\left(\omega_{t}\nabla^{p}_{\mathcal H_{+}}f\right)(v)(\mathbf I-\mathbf P_v)A_{-, j}\left(\omega_{t} \nabla^{m}_{\mathcal H_{+}}f\right)(v)dv\\
     &\quad+\sum_{1\le l\le3}\int_{\Omega_{3}}\partial_{l}a_{jk}(v)v'_{l}\sqrt\mu(v')\nabla^{m-p}_{\mathcal H_{+}}\sqrt\mu(v')\Big[\mathbf P_vA_{-, k}\left(\omega_{t}\nabla^{p}_{\mathcal H_{+}}f\right)(v)(\mathbf I-\mathbf P_v)A_{-, j}\left(\omega_{t} \nabla^{m}_{\mathcal H_{+}}f\right)(v)\\
     &\qquad+(\mathbf I-\mathbf P_v)A_{-, k}\left(\omega_{t}\nabla^{p}_{\mathcal H_{+}}f\right)(v)\mathbf P_vA_{-, j}\left(\omega_{t} \nabla^{m}_{\mathcal H_{+}}f\right)(v)\\
     &\qquad+(\mathbf I-\mathbf P_v)A_{-, k}\left(\omega_{t}\nabla^{p}_{\mathcal H_{+}}f\right)(v)(\mathbf I-\mathbf P_v)A_{-, j}\left(\omega_{t} \nabla^{m}_{\mathcal H_{+}}f\right)(v)\Big]dv\\
     &\quad+\frac12\sum_{1\le l, p\le3}\int_{\Omega_{3}}\int_{0}^{1}\partial_{lp}a_{jk}(v-sv')dsv'_{l}v'_{p}\sqrt\mu(v')\nabla^{m-p}_{\mathcal H_{+}}\sqrt\mu(v')A_{-, k}\left(\omega_{t}\nabla^{p}_{\mathcal H_{+}}f\right)(v)A_{-, j}\left(\omega_{t} \nabla^{m}_{\mathcal H_{+}}f\right)(v)dv,
\end{align*}
noting that $|\partial_{lp}a_{jk}(v-sv')|\lesssim\langle v\rangle^{\gamma}$ in $\Omega_{3}$, then follows immediately from the Cauchy-Schwarz inequality and \eqref{2-1-1}, \eqref{2-2-1} that
\begin{align*}
     \left|I_{1}\big|_{\Omega_{3}}\right|\lesssim\sum_{p=0}^{m-1}\binom{m}{p}\left\|\nabla^{m-p}_{\mathcal H_{+}}\sqrt\mu\right\|_{L^{2}(\mathbb R^{3})}\left\|\omega_{t}\nabla^{p}_{\mathcal H_{+}}f\right\|_{A}\left\|\omega_{t}\nabla^{m}_{\mathcal H_{+}}f\right\|_{A}.
\end{align*}
For the term $I_{2}$, we can write it as
\begin{align*}
     I_{2}&=\frac{bc_{0}}{1+t}\sum_{p=0}^{m-1}\binom{m}{p}\int_{\mathbb R^{3}}\left(a_{jk}*\left(v_{k}\sqrt\mu\nabla^{m-p}_{\mathcal H_{+}}\sqrt\mu\right)\right)\langle v\rangle^{b-2}\omega_{t}\nabla^{p}_{\mathcal H_{+}}f\cdot A_{-, j}\left(\omega_{t} \nabla^{m}_{\mathcal H_{+}}f\right)\\
     &\quad+\frac{bc_{0}}{1+t}\int_{\mathbb R^{3}}\left(a_{jk}*\left(v_{k}\mu\right)\right)\langle v\rangle^{b-2}\omega_{t}\nabla^{m}_{\mathcal H_{+}}f\cdot A_{-, j}\left(\omega_{t}\nabla^{m}_{\mathcal H_{+}}f\right)=I_{2, 1}+I_{2, 2},
\end{align*}
Since $0<b\le2$, then as discussion in $I_{1}$, we can deduce that
\begin{align*}
     &\left|I_{2, 1}\right|\lesssim\sum_{p=0}^{m-1}\binom{m}{p}\left\|\nabla^{m-p}_{\mathcal H_{+}}\sqrt\mu\right\|_{L^{2}(\mathbb R^{3})}\left\|\omega_{t}\nabla^{p}_{\mathcal H_{+}}f\right\|_{A}\left\|\omega_{t}\nabla^{m}_{\mathcal H_{+}}f\right\|_{A}.
\end{align*}
To estimate $I_{2, 2}$, using \eqref{sum=0-1}, we can write it as
\begin{align*}
     I_{2, 2}&=\frac{bc_{0}}{1+t}\int_{\mathbb R^{3}}\left(a_{jk}*\left(v_{k}\mu\right)\right)\langle v\rangle^{b-2}\omega_{t}\nabla^{m}_{\mathcal H_{+}}f\cdot (\mathbf I-\mathbf P_{v})A_{-, j}\left(\omega_{t}\nabla^{m}_{\mathcal H_{+}}f\right)
\end{align*}
since $\gamma+2>-1$, from \eqref{singular integration} we have
\begin{align*}
     \left|a_{jk}*\left(v_{k}\mu\right)\right|&\le\int_{\mathbb R^{3}}\left|v-v'\right|^{\gamma+2}\langle v\rangle\mu(v')dv'
     \le\left\|\langle \cdot\rangle\sqrt\mu\right\|_{L^{\infty}(\mathbb R^{3})}\int_{\mathbb R^{3}}\left|v-v'\right|^{\gamma+2}e^{-\frac{|v'|^{2}}{4}}dv'\le C_{0}\langle v\rangle^{\gamma+2}.
\end{align*}
then follows immediately from \eqref{2-1}, \eqref{2-1-1} and the Cauchy-Schwarz inequality that
\begin{align*}
     \left|I_{2, 2}\right|\le \frac{bc_{0}}{1+t}\frac{\tilde C_{7}}{(C_{1})^{2}}\left\|\omega_{t}\nabla^{m}_{\mathcal H_{+}}f\right\|^{2}_{A}\le\frac{1}{32}\left\|\omega_{t}\nabla^{m}_{\mathcal H_{+}}f\right\|^{2}_{A},
\end{align*}
if $\frac{bc_{0}\tilde C_{7}}{(C_{1})^{2}}\le\frac{1}{32}$. Similarly, we can deduce that
\begin{align*}
     &\left|I_{3}\right|\lesssim\sum_{p=0}^{m-1}\binom{m}{p}\left\|\nabla^{m-p}_{\mathcal H_{+}}\sqrt\mu\right\|_{L^{2}(\mathbb R^{3})}\left\|\omega_{t}\nabla^{p}_{\mathcal H_{+}}f\right\|_{A}\left\|\omega_{t}\nabla^{m}_{\mathcal H_{+}}f\right\|_{A}+\frac{1}{32}\left\|\omega_{t}\nabla^{m}_{\mathcal H_{+}}f\right\|^{2}_{A}.
\end{align*}
For term $I_{4}$, we can write it as
\begin{align*}
     I_{4}&=-\left(\frac{bc_{0}}{1+t}\right)^{2}\sum_{p=0}^{m-1}\binom{m}{p}\int_{\mathbb R^{3}}\left(a_{jk}*\left(v_{j}v_{k}\sqrt\mu\nabla^{m-p}_{\mathcal H_{+}}\sqrt\mu\right)\right)\langle v\rangle^{2(b-2)}\omega_{t} \nabla^{p}_{\mathcal H_{+}}f\cdot\omega_{t}\nabla^{m}_{\mathcal H_{+}}f\\
     &\quad-\left(\frac{bc_{0}}{1+t}\right)^{2}\int_{\mathbb R^{3}}\left(a_{jk}*\left(v_{j}v_{k}\mu\right)\right)\langle v\rangle^{2(b-2)}\omega_{t}\nabla^{m}_{\mathcal H_{+}}f\cdot\omega_{t}\nabla^{m}_{\mathcal H_{+}}f=I_{4, 1}+I_{4, 2},
\end{align*}
by using \eqref{2-2} and \eqref{singular integration}, one shows
\begin{align*}
     \left|I_{4, 1}\right|&\lesssim\sum_{p=0}^{m-1}\binom{m}{p}\left\|\nabla^{m-p}_{\mathcal H_{+}}\sqrt\mu\right\|_{L^{2}(\mathbb R^{3})}\int_{\mathbb R^{3}}\langle v\rangle^{\gamma+2}\left|\omega_{t} \nabla^{p}_{\mathcal H_{+}}f\cdot\omega_{t}\nabla^{m}_{\mathcal H_{+}}f\right|dv\\
     &\lesssim\sum_{p=0}^{m-1}\binom{m}{p}\left\|\nabla^{m-p}_{\mathcal H_{+}}\sqrt\mu\right\|_{L^{2}(\mathbb R^{3})}\left\|\omega_{t}\nabla^{p}_{\mathcal H_{+}}f\right\|_{A}\left\|\omega_{t}\nabla^{m}_{\mathcal H_{+}}f\right\|_{A},
\end{align*}
as discuss in $I_{2, 2}$, we have
\begin{align*}
     I_{4, 2}&\le\frac{1}{16}\left\|\omega_{t}\nabla^{m}_{\mathcal H_{+}}f\right\|^{2}_{A}.
\end{align*}
It remains to consider $I_{5}$ and $I_{6}$. By using \eqref{singular integration}, one has
\begin{align*}
     \left|a_{jk}*\left(\sqrt\mu\nabla^{m-p}_{\mathcal H_{+}}\sqrt\mu\right)\right|+\left|a_{jk}*\left(v_{j}\sqrt\mu\nabla^{m-p}_{\mathcal H_{+}}\sqrt\mu\right)\right|\lesssim\left\|\nabla^{m-p}_{\mathcal H_{+}}\sqrt\mu\right\|_{L^{2}(\mathbb R^{3})}\langle v\rangle^{\gamma+2},
\end{align*}
then from \eqref{2-1} and Cauchy inequality, we have
\begin{align*}
     \left|I_{6}+I_{7}\right|\lesssim\sum_{p=1}^{m}\binom{m}{p}p\left\|\nabla^{m-p}_{\mathcal H_{+}}\sqrt\mu\right\|_{L^{2}(\mathbb R^{3})}\left\|\omega_{t}\nabla^{p-1}_{\mathcal H_{+}}f\right\|_{A}\left\|\omega_{t}\nabla^{m}_{\mathcal H_{+}}f\right\|_{A}.
\end{align*}
To estimate $\|\nabla^{m-p}_{\mathcal H_{+}}\sqrt\mu\|_{L^{2}(\mathbb R^{3})}$, we have
\begin{align*}
     \left\|\nabla^{m-p}_{\mathcal H_{+}}\sqrt\mu\right\|^{2}_{L^{2}(\mathbb R^{3})}=\sum_{|\alpha|=m-p}\frac{(m-p)!}{\alpha!}\left\|A^{\alpha}_{+}\sqrt\mu\right\|^{2}_{L^{2}(\mathbb R^{3})}=\sum_{|\alpha|=m-p}(m-p)!\left\|\Psi_{\alpha}\right\|^{2}_{L^{2}(\mathbb R^{3})}=\frac{(m-p+2)!}{2},
\end{align*}
where $\{\Psi_{\alpha}\}_{\alpha\in\mathbb N^{3}}$ is the orthonormal basis in $L^{2}(\mathbb R^{3})$. Combining the above results, there exists a constant $C_{7}>0$, depends on $\gamma$, $b$ and $c_{0}$, such that
\begin{align*}
     &\left|\left([\omega_{t}  \nabla_{\mathcal H_{+}}^{m},\ \mathcal L_{1}]f,\ \omega_{t}  \nabla_{\mathcal H_{+}}^{m}f\right)_{L^{2}(\mathbb R^{3})}\right|\\
     &\le\frac18\left\|\omega_{t}\nabla^{m}_{\mathcal H_{+}}f\right\|^{2}_{A}+C_{7}\sum_{p=0}^{m-1}\binom{m}{p}\sqrt{\frac{(m-p+2)!}{2}}\left\|\omega_{t}\nabla^{p}_{\mathcal H_{+}}f\right\|_{A}\left\|\omega_{t}\nabla^{m}_{\mathcal H_{+}}f\right\|_{A}\\
     &\quad+C_{7}\sum_{p=1}^{m}\binom{m}{p}p\sqrt{\frac{(m-p+2)!}{2}}\left\|\omega_{t}\nabla^{p-1}_{\mathcal H_{+}}f\right\|_{A}\left\|\omega_{t}\nabla^{m}_{\mathcal H_{+}}f\right\|_{A}.
\end{align*}
Specially, for $m=0$, we remark that
\begin{align}\label{upper bounded}
     \left|\left([\omega_{t},\ \mathcal L_{1}]f,\ \omega_{t} f\right)_{L^{2}(\mathbb R^{3})}\right|&\le\frac18\left\|\omega_{t} f\right\|^{2}_{A}.
\end{align}

\section{Existence and Energy estimate}

Let $c_{0}$ be a positive constant, we will show the existence and $L^{2}-$estimate concerning the following weighted function
\begin{align*}
\omega_{t}(v) = e^{\frac{c_{0}}{1+t}\langle v\rangle^{b}},
\end{align*}
with $0<b\le2$ and $t\ge 0$.
\begin{prop}
\label{existence2}
    Let $-3<\gamma<0$, assume that there exists $\varepsilon_{0}>0$ such that $e^{c_{0}\langle\cdot\rangle^{b}}f_{0}\in L^{2}(\mathbb R^{3})$ satisfying
\begin{align}\label{initial-small}
     \left\|e^{c_{0}\langle\cdot\rangle^{b}}f_{0}\right\|_{L^{2}(\mathbb R^{3})}\le\varepsilon_{0},
\end{align}
then the Cauchy problem \eqref{1-2} admits a weak solution $f\in L^{\infty}(]0, \infty[; L^{2}(\mathbb R^{3}))$, and for any $T>0$
\begin{align}\label{energy-weighted1}
        &\left\|\omega_{t} f(t)\right\|^{2}_{L^{2}(\mathbb R^{3})}+\frac{2c_{0}}{(1+T)^{2}}\int_{0}^{t}\left\|\langle\cdot\rangle^{\frac b2}\omega_{\tau} f\right\|^{2}_{L^{2}(\mathbb R^{3})}d\tau+\int_{0}^{t}\|\omega_{\tau} f\|^{2}_{A}d\tau\le C_{T} \varepsilon_0^{2}, \quad\forall \ 0<t\le T.
\end{align}
\end{prop}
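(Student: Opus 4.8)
The plan is to establish the weighted estimate \eqref{energy-weighted1} first as an a priori bound, and then to produce the weak solution by a standard approximation scheme whose uniform bounds are precisely this estimate. For the a priori step I would start from the weighted equation
$$
\partial_t(\omega_t f)+\frac{c_0}{(1+t)^2}\langle v\rangle^b(\omega_t f)+\omega_t\mathcal L f=\omega_t\Gamma(f,f),
$$
and take the $L^2(\mathbb R^3)$ inner product with $\omega_t f$, which yields
$$
\frac12\frac{d}{dt}\|\omega_t f\|^2_{L^2(\mathbb R^3)}+\frac{c_0}{(1+t)^2}\big\|\langle\cdot\rangle^{\frac b2}\omega_t f\big\|^2_{L^2(\mathbb R^3)}+(\omega_t\mathcal L f,\omega_t f)_{L^2(\mathbb R^3)}=(\omega_t\Gamma(f,f),\omega_t f)_{L^2(\mathbb R^3)},
$$
where the sign of $\partial_t\omega_t$ has already been used to place the good term $\frac{c_0}{(1+t)^2}\|\langle\cdot\rangle^{b/2}\omega_t f\|^2_{L^2}$ on the left.

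Next I would bound the three interaction terms. Writing $\omega_t\mathcal L_1 f=\mathcal L_1(\omega_t f)+[\omega_t,\mathcal L_1]f$, Lemma \ref{lemma2.1} applied to $\omega_t f$ gives $(\mathcal L_1(\omega_t f),\omega_t f)_{L^2}\ge(1-\epsilon_1)\|\omega_t f\|^2_A-C_{\epsilon_1}\|\omega_t f\|^2_{2,\gamma/2}$, while \eqref{upper bounded} controls the commutator by $\tfrac18\|\omega_t f\|^2_A$; the $\mathcal L_2$-contribution is handled by \eqref{L2-weighted}, and the nonlinear term by Lemma \ref{commutator-nonlinear} with $g=h=f$, giving $|(\omega_t\Gamma(f,f),\omega_t f)_{L^2}|\le C_4\|f\|_{2,\gamma/2}\|\omega_t f\|^2_A$. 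Since $\gamma<0$ we have $\langle v\rangle^{\gamma/2}\le1$, and since $\omega_t\ge1$, this implies $\|f\|_{2,\gamma/2}\le\|\omega_t f\|_{2,\gamma/2}\le\|\omega_t f\|_{L^2}$ (and, by \eqref{2-2}, also $\le\tfrac1{C_1}\|\omega_t f\|_A$). Fixing $\epsilon_1$ small and using Young's inequality on the $\mathcal L_2$-term, every occurrence of $\|\omega_t f\|^2_A$ on the right except a fixed fraction is absorbed into the left, provided $\|\omega_t f(t)\|_{L^2}$ stays below a threshold $\eta$ with $C_4\eta$ small enough; this is exactly where the smallness hypothesis \eqref{initial-small} enters, through a continuity argument. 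What remains is a differential inequality of the form
$$
\frac{d}{dt}\|\omega_t f\|^2_{L^2(\mathbb R^3)}+\frac{2c_0}{(1+t)^2}\big\|\langle\cdot\rangle^{\frac b2}\omega_t f\big\|^2_{L^2(\mathbb R^3)}+c\,\|\omega_t f\|^2_A\le C\,\|\omega_t f\|^2_{L^2(\mathbb R^3)},
$$
and integrating it on $[0,t]$, bounding $(1+\tau)^{-2}\ge(1+T)^{-2}$, and applying Gronwall's lemma produces $\|\omega_t f(t)\|^2_{L^2}\le e^{CT}\|\omega_0 f_0\|^2_{L^2}$, hence \eqref{energy-weighted1} with $C_T$ depending on $\gamma$, $b$, $c_0$ and $T$; choosing $\varepsilon_0$ with $e^{CT}\varepsilon_0^2\le\eta^2$ closes the continuity argument.

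To obtain the weak solution itself I would run the same computation on a family of regularized Cauchy problems — mollifying the kernel and, if needed, replacing $\omega_t$ by the bounded truncation $\omega_t^{(R)}=\min\{\omega_t,R\}$ so that every integration by parts above is licit — for which solutions exist by a standard fixed-point argument and obey the a priori bound uniformly in the regularization and in $R$. One then passes to the limit: weak-$\ast$ compactness in $L^\infty(]0,\infty[;L^2)$, the uniform bound $\int_0^T\|\omega_\tau f\|^2_A\,d\tau\lesssim1$ together with an Aubin--Lions-type argument to handle the quadratic term $\Gamma(f,f)$, and verification of the weak formulation; finally $R\to\infty$ by monotone convergence restores the full weight in \eqref{energy-weighted1}. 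The main obstacle, and the only step specific to soft potentials, is closing the cubic term $C_4\|f\|_{2,\gamma/2}\|\omega_t f\|^2_A$ against the degenerate dissipation $\|\cdot\|_A$: unlike the hard-potential case it cannot be absorbed unconditionally, so one must exploit that $\langle v\rangle^{\gamma/2}$ is bounded — so $\|f\|_{2,\gamma/2}$ is controlled by the undifferentiated weighted $L^2$ norm rather than by the dissipation — and propagate its smallness by the continuity argument.
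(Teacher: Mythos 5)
Your a priori estimate is exactly the computation the paper performs (weighted equation, inner product with $\omega_t f$, Lemma \ref{lemma2.1} for $\mathcal L_1(\omega_t f)$, \eqref{upper bounded} for the commutator, \eqref{L2-weighted} for $\mathcal L_2$, Lemma \ref{commutator-nonlinear} for $\Gamma$, absorption of the dissipation, Gronwall on the leftover $\|\omega_t f\|^2_{2,\gamma/2}$ term), but you produce the solution by a genuinely different route. The paper never runs a nonlinear a priori estimate plus continuity argument: it linearizes via the iteration $\partial_t f^n+\mathcal L f^n=\Gamma(f^{n-1},f^n)$, so that the dangerous cubic term appears as $C_4\|f^{n-1}\|_{2,\gamma/2}\|\omega_t f^n\|^2_A$ with the small factor supplied by the \emph{previous} iterate's bound (induction), not by a bootstrap on the solution itself; it then shows the differences $\zeta^n=f^{n+1}-f^n$ contract in $L^\infty([0,T];L^2)$, which yields strong convergence, makes the passage to the limit in the quadratic term immediate via the trilinear estimate, and delivers uniqueness as a byproduct. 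Your compactness route (mollified kernel, truncated weight, weak-$*$ limits, Aubin--Lions for $\Gamma(f,f)$) can be made to work, but it is where the real technical cost sits for soft potentials: the dissipation only controls $\nabla f$ with the degenerate weight $\langle v\rangle^{\gamma/2}$, so the strong compactness needed for the quadratic term requires extra care (time-derivative bounds from the equation, local-in-$v$ arguments), and the $\min\{\omega_t,R\}$ truncation destroys the identity $\partial_j\omega_t=\frac{bc_0}{1+t}v_j\langle v\rangle^{b-2}\omega_t$ on which all the commutator computations of Section \ref{section commutators} rest, so you would need a smooth truncation and a recheck of Lemma \ref{commutator-nonlinear} and \eqref{upper bounded} with constants uniform in $R$. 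The paper's iteration avoids all of this and additionally proves uniqueness, which your scheme does not give directly; on the other hand, your nonlinear a priori estimate is the more robust and reusable statement. Both arguments are sound; yours simply front-loads the difficulty into the limit passage rather than into the design of the approximation.
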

\begin{proof}
     Let $f^{0}=\omega_{0}f_{0}$, consider the following approximate equations
\begin{align*}
     \partial_tf^{n}+\mathcal Lf^{n}=\Gamma(f^{n-1}, f^{n}),\quad f^{n}|_{t=0}=f_0, \quad n\ge1.
\end{align*}
For $n=1$, according to the general existence proof we have that the above Cauchy problem has a unique weak solution $f\in L^{\infty}([0, \infty[; L^{2}(\mathbb R^{3}))$. With the equation
$$\left(\partial_{t}+\frac{c_{0}}{(1+t)^{2}}\langle v\rangle^{b}\right)\omega_{t} f^{1}+\omega_{t}\mathcal Lf^{1}=\omega_{t} \Gamma (f^{0}, f^{1}),$$
taking the inner product with $\omega_{t} f^{1}$ on $L^{2}(\mathbb R^{2})$, then it follows from Lemma \ref{lemma2.1}, Lemma \ref{commutator-nonlinear} and \eqref{L2-weighted}, \eqref{upper bounded} and
\eqref{initial-small}, we have, for small $\varepsilon_{0}$,
\begin{align*}
     &\frac{d}{dt}\left\|\omega_{t} f^{1}\right\|^{2}_{L^{2}(\mathbb R^{3})}+\frac{2c_{0}}{(1+t)^{2}}\left\|\langle\cdot\rangle^{\frac b2}\omega_{t} f^{1}\right\|^{2}_{L^{2}(\mathbb R^{3})}+\|\omega_{t} f^{1}\|^{2}_{A}\le C\left\|\omega_{t} f^{1}\right\|^{2}_{2, \frac\gamma2},
\end{align*}
then for all $0<t\le T$
\begin{align}\label{existence1}
     &\left\|\omega_{t} f^{1}(t)\right\|^{2}_{L^{2}(\mathbb R^{3})}+\frac{2c_{0}}{(1+T)^{2}}\int_{0}^{t}\left\|\langle\cdot\rangle^{\frac b2}\omega_{\tau} f^{1}\right\|^{2}_{L^{2}(\mathbb R^{3})}d\tau+\int_{0}^{t}\|\omega_{\tau} f\|^{2}_{A}d\tau
     \le C(\varepsilon_{0})^{2}.
\end{align}
By iterating and \eqref{existence1}, one can obtain that for all $n\ge1$, the approximate equations admit a solution $f^{n}$ satisfying
\begin{align*}
        &\left\|\omega_{t} f^{n}(t)\right\|^{2}_{L^{2}(\mathbb R^{3})}+\frac{2c_{0}}{(1+T)^{2}}\int_{0}^{t}\left\|\langle\cdot\rangle^{\frac b2}\omega_{\tau} f^{n}\right\|^{2}_{L^{2}(\mathbb R^{3})}d\tau+\int_{0}^{t}\|\omega_{\tau} f^{n}\|^{2}_{A}d\tau\le C(\varepsilon_{0})^{2}.
\end{align*}
Setting $\zeta^{n}=f^{n+1}-f^{n}$ for $n\ge1$, then
\begin{align*}
     \partial_t\zeta^{n}+\mathcal L\zeta^{n}=\Gamma(f^{n}, \zeta^{n})+\Gamma(\zeta^{n-1}, f^{n}),\quad \zeta^{n}|_{t=0}=0.
\end{align*}
Taking inner product with $\omega_{t}\zeta^{n}$ on both side of
$$\left(\partial_{t}+\frac{c_{0}}{(1+t)^{2}}\langle v\rangle^{b}\right)\omega_{t}\zeta^{n}+\omega_{t}\mathcal L\zeta^{n}=\omega_{t}\Gamma(f^{n}, \zeta^{n})+\omega_{t}\Gamma(\zeta^{n-1}, f^{n}),$$
for any $T>0$, by using Lemma \ref{lemma2.1}, Lemma \ref{commutator-nonlinear}, \eqref{L2-weighted} and \eqref{upper bounded}, we can obtain that, for all $0<t\le T$
\begin{align*}
     &\left\|\omega_{t}\zeta^{n}(t)\right\|^{2}_{L^{2}(\mathbb R^{3})}+\frac{2c_{0}}{(1+T)^{2}}\int_{0}^{t}\left\|\langle\cdot\rangle^{\frac b2}\omega_{\tau}\zeta^{n}\right\|^{2}_{L^{2}(\mathbb R^{3})}d\tau+\int_{0}^{t}\left\|\omega_{\tau}\zeta^{n}\right\|^{2}_{A}d\tau\le C\, \varepsilon_{0}^{2}  \left\|\omega_{t}\zeta^{n-1}\right\|^{2}_{L^{\infty}([0, T]; L^{2}(\mathbb R^{3}))},
\end{align*}
this yields, for small $ \varepsilon_{0}$,
\begin{align*}
     &\left\|\omega_{t}\zeta^{n}\right\|^{2}_{L^{\infty}([0, \infty[; L^{2}(\mathbb R^{3}))}\le\frac12\left\|\omega_{t}\zeta^{n-1}\right\|^{2}_{L^{\infty}([0, \infty[; L^{2}(\mathbb R^{3}))}\le2^{-n}\left\|\omega_{t}\zeta^{0}\right\|^{2}_{L^{\infty}([0, \infty[; L^{2}(\mathbb R^{3}))}
\end{align*}
Thus, $\{f^{n}\}\subset L^{2}(\mathbb R^{3})$ is a Cauchy sequence, the limit function $f\in L^{\infty}([0, \infty[; L^{2}(\mathbb R^{3}))$ is a unique solution to Cauchy problem \eqref{1-2}, and satisfying \eqref{energy-weighted1}.

\end{proof}
With the same argument, we can also obtain the following global existence of weak solutions.
\begin{cor}
\label{existence3}
    Let $-3<\gamma<0$, and $f_{0}\in L^{2}(\mathbb R^{3})$ satisfying
\begin{align}\label{initial-small1}
     \left\|f_{0}\right\|_{L^{2}(\mathbb R^{3})}\le\varepsilon_{0},
\end{align}
then the Cauchy problem \eqref{1-2} admits a global in time weak solution satisfying, for any $T>0$, there exists $C_T>0$ such that
\begin{align}\label{energy-weighted11}
        &\left\|f(t)\right\|^{2}_{L^{2}(\mathbb R^{3})}+\int_{0}^{t}\| f\|^{2}_{A}ds\le C_T\, \varepsilon^{2}_{1}, \quad\forall \ 0<t\le T.
\end{align}
\end{cor}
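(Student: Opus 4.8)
The plan is to run the \emph{same} Picard iteration as in the proof of Proposition \ref{existence2}, but with the trivial weight $\omega_t\equiv 1$ (formally $c_0=0$). The one structural simplification that makes this work without the exponential weight is the sign condition $\gamma<0$: since $\langle v\rangle^{\gamma/2}\le 1$, one has $\|g\|_{2,\gamma/2}\le\|g\|_{L^2(\mathbb R^3)}$ for every $g$, so the lower order term $\|f\|_{2,\gamma/2}^2$ produced by Lemma \ref{lemma2.1} and by the $\mathcal L_2$ estimate \eqref{L2-weighted} can be absorbed simply by Gronwall's inequality on a finite interval $[0,T]$, rather than needing the coercive term $\tfrac{c_0}{(1+t)^2}\|\langle\cdot\rangle^{b/2}\omega_t f\|_{L^2}^2$. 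Note also that the commutator $[\omega_t,\mathcal L_1]$ vanishes identically when $\omega_t\equiv1$, so \eqref{upper bounded} is not even needed.

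Concretely, I would set $f^0=f_0$ and solve the linear approximate problems $\partial_t f^n+\mathcal L f^n=\Gamma(f^{n-1},f^n)$, $f^n|_{t=0}=f_0$, each of which has a unique weak solution $f^n\in L^\infty([0,\infty[;L^2(\mathbb R^3))$ by the standard linear theory. Taking the $L^2$ inner product of the $n$-th equation with $f^n$ and using Lemma \ref{lemma2.1} (with $\epsilon_1=\tfrac12$) so that $(\mathcal L_1 f^n,f^n)_{L^2}\ge\tfrac12\|f^n\|_A^2-C\|f^n\|_{L^2}^2$, together with \eqref{L2-weighted} for the $\mathcal L_2$ term and Proposition \ref{trilinear} for the nonlinear term, bounded by $C_2\|f^{n-1}\|_{2,\gamma/2}\|f^n\|_A^2\le C_2\varepsilon_0\|f^n\|_A^2$ under the inductive hypothesis $\|f^{n-1}(t)\|_{L^2}\le\varepsilon_0$-type control, one gets for $\varepsilon_0$ small
$$\frac{d}{dt}\|f^n(t)\|_{L^2(\mathbb R^3)}^2+\|f^n\|_A^2\le C\|f^n(t)\|_{L^2(\mathbb R^3)}^2 .$$
Gronwall then yields $\|f^n(t)\|_{L^2(\mathbb R^3)}^2+\int_0^t\|f^n\|_A^2\,ds\le e^{CT}\|f_0\|_{L^2(\mathbb R^3)}^2\le C_T\varepsilon_0^2$ for $0<t\le T$, uniformly in $n$, which closes the induction and gives the a priori bound \eqref{energy-weighted11}.

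To obtain convergence, I would estimate the differences $\zeta^n=f^{n+1}-f^n$, which solve $\partial_t\zeta^n+\mathcal L\zeta^n=\Gamma(f^n,\zeta^n)+\Gamma(\zeta^{n-1},f^n)$ with $\zeta^n|_{t=0}=0$. The same energy computation, using the uniform bounds on $\|f^n\|_A$ and $\|f^n\|_{2,\gamma/2}\le\varepsilon_0$ to absorb $\Gamma(f^n,\zeta^n)$ and to bound $|(\Gamma(\zeta^{n-1},f^n),\zeta^n)_{L^2}|\le C\varepsilon_0\|\zeta^{n-1}\|_{L^\infty([0,T];L^2(\mathbb R^3))}\|\zeta^n\|_A$, produces for $\varepsilon_0$ small the contraction $\|\zeta^n\|_{L^\infty([0,\infty[;L^2(\mathbb R^3))}^2\le\tfrac12\|\zeta^{n-1}\|_{L^\infty([0,\infty[;L^2(\mathbb R^3))}^2$. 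Hence $(f^n)$ is Cauchy in $L^\infty([0,\infty[;L^2(\mathbb R^3))$; its limit $f$ is a weak solution of \eqref{1-2}, it is unique by the same difference estimate applied to two solutions, and it inherits \eqref{energy-weighted11} by weak lower semicontinuity of the $L^2$ and $\|\cdot\|_A$ norms.

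The only point requiring any care — and it is minor — is, as noted above, that there is no longer an extra coercive term on the left-hand side to swallow $\|f\|_{2,\gamma/2}^2$; one must instead dispose of it through $\gamma<0$ and Gronwall on a bounded interval, which is precisely why the constant $C_T$ depends on $T$ and the bound is not uniform as $t\to\infty$. Apart from this, the argument is a line-by-line transcription of the proof of Proposition \ref{existence2} with $\omega_t$ replaced by $1$.
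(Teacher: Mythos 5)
Your proposal is correct and is essentially the paper's own argument: the paper proves this corollary by simply rerunning the Picard iteration of Proposition \ref{existence2} with the weight removed, which is exactly what you do, and your observation that $\gamma<0$ lets $\|\cdot\|_{2,\gamma/2}$ be absorbed into $\|\cdot\|_{L^2}$ via Gronwall on $[0,T]$ is the right (and only) point needing attention.
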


\section{Analyticity in time variable}
This section is devoted to proving time analyticity, for which the weight $\omega$ is not required. With the Cauchy problem \eqref{1-2}, we can establish the following $L^{2}-$esitmate.
\begin{lemma}\label{lemma3.2}
    For $-3<\gamma<0$. Lef $f$ be the solution of Cauchy problem \eqref{1-2} with $\|f\|_{L^{\infty}([0, \infty[; L^{2})}$ small enough. Then there exists a constant $C_{8}>0$ such that for any $T>0$ and $t\in]0,T]$,
    \begin{align}\label{k=1}
        \|t\partial_tf\|^2_{ L^2(\mathbb R^3)}+\int_0^t\|s\partial_sf(s)\|^2_Ads\le\left(C_8\epsilon\right)^2.
    \end{align}
\end{lemma}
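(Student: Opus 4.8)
The plan is to differentiate the equation in $t$, run a $t$-weighted $L^{2}$ energy estimate on $g:=\partial_t f$, and close it by Gronwall's inequality after disposing of the single genuinely dangerous term by means of the equation for $f$ itself together with the a priori bound $\int_0^T\|f\|_A^2\,ds\le C_T\epsilon^2$ of Corollary~\ref{existence3}. Throughout, $\epsilon:=\|f\|_{L^\infty([0,\infty[;L^2)}$, and the differentiation in $t$ and the integrations by parts below are performed on the regularized/approximate solutions and then passed to the limit.

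Differentiating $\partial_t f+\mathcal Lf=\Gamma(f,f)$ gives $\partial_t g+\mathcal Lg=\Gamma(g,f)+\Gamma(f,g)$. Using the identity $\tfrac12\tfrac{d}{dt}\|tg\|_{L^2}^2=t\|g\|_{L^2}^2+t^2(g,\partial_t g)_{L^2}$ we obtain
\[
\tfrac12\tfrac{d}{dt}\|tg\|_{L^2}^2+t^2(\mathcal Lg,g)_{L^2}=t\|g\|_{L^2}^2+t^2(\Gamma(g,f),g)_{L^2}+t^2(\Gamma(f,g),g)_{L^2}.
\]
For the linear part, Lemma~\ref{lemma2.1} applied to $g$ together with $|(\mathcal L_2 g,g)_{L^2}|=|(\Gamma(g,\sqrt\mu),g)_{L^2}|\le C_2\|g\|_{2,\frac\gamma2}\|\sqrt\mu\|_A\|g\|_A$ (Proposition~\ref{trilinear}) and Young's inequality yields $(\mathcal Lg,g)_{L^2}\ge\tfrac12\|g\|_A^2-C\|g\|_{2,\frac\gamma2}^2\ge\tfrac12\|g\|_A^2-C\|g\|_{L^2}^2$, since $\gamma<0$ forces $\langle v\rangle^{\gamma/2}\le1$. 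For the nonlinear part, again by Proposition~\ref{trilinear} and $\|f\|_{2,\frac\gamma2}\le\|f\|_{L^2}\le\epsilon$ small, $t^2|(\Gamma(f,g),g)_{L^2}|\le C_2\epsilon\,t^2\|g\|_A^2\le\tfrac1{16}t^2\|g\|_A^2$, whereas $t^2|(\Gamma(g,f),g)_{L^2}|\le C_2 t^2\|g\|_{L^2}\|f\|_A\|g\|_A\le\tfrac1{16}t^2\|g\|_A^2+C\|tg\|_{L^2}^2\|f\|_A^2$. Collecting these,
\[
\tfrac12\tfrac{d}{dt}\|tg\|_{L^2}^2+\tfrac14 t^2\|g\|_A^2\le t\|g\|_{L^2}^2+C\big(1+\|f\|_A^2\big)\|tg\|_{L^2}^2 .
\]

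The one term not yet of Gronwall type is $t\|g\|_{L^2}^2=t\|\partial_t f\|_{L^2}^2$, and this is the main obstacle: for soft potentials $\|\partial_t f\|_{L^2}^2$ cannot be controlled by $\|f\|_A^2$ because of the degenerate weight $\langle v\rangle^{\gamma/2}$ in \eqref{2-2-1}. I resolve it by testing the \emph{original} equation against $\partial_t f$. Writing $\mathcal L_1 f=A_{+,j}\big((a_{jk}*\mu)A_{-,k}f\big)$ (Lemma~\ref{representations}) and using the Cauchy–Schwarz inequality for the nonnegative matrix $(a_{jk}*\mu)$ together with the definition of $\|\cdot\|_A$ gives $|(\mathcal L_1 f,\partial_t f)_{L^2}|\le C\|f\|_A\|\partial_t f\|_A$, while $|(\mathcal L_2 f,\partial_t f)_{L^2}|+|(\Gamma(f,f),\partial_t f)_{L^2}|\le C\epsilon(1+\|f\|_A)\|\partial_t f\|_A$ by Proposition~\ref{trilinear}. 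Hence $\|\partial_t f\|_{L^2}^2\le C(\|f\|_A+\epsilon)\|\partial_t f\|_A$, so that
\[
t\|\partial_t f\|_{L^2}^2\le C(\|f\|_A+\epsilon)\,t\|\partial_t f\|_A\le\tfrac18 t^2\|\partial_t f\|_A^2+C\big(\|f\|_A^2+\epsilon^2\big).
\]

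Substituting this into the previous differential inequality and absorbing $\tfrac18 t^2\|g\|_A^2$ on the left, I arrive at
\[
\tfrac{d}{dt}\|tg\|_{L^2}^2+\tfrac14 t^2\|g\|_A^2\le C\big(\|f\|_A^2+\epsilon^2\big)+C\big(1+\|f\|_A^2\big)\|tg\|_{L^2}^2 .
\]
Integrating from $0$ to $t$ (note $\|tg\|_{L^2}^2\to0$ as $t\to0^+$), using $\int_0^T\|f\|_A^2\,ds\le C_T\epsilon^2$ from Corollary~\ref{existence3}, and noting that $s\mapsto 1+\|f(s)\|_A^2$ is integrable on $[0,T]$ with integral $\le T+C_T\epsilon^2$, Gronwall's inequality gives $\|t\partial_t f(t)\|_{L^2}^2\le C_T\epsilon^2$ for $0<t\le T$. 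Feeding this bound back into the integral term controls $\int_0^t s^2\|\partial_s f(s)\|_A^2\,ds$ as well, which yields \eqref{k=1} with a suitable $C_8$ depending on $\gamma$ and $T$.
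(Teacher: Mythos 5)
Your proof is correct and follows essentially the same route as the paper: the decisive step in both is to dispose of the non-coercive term $t\|\partial_t f\|_{L^2}^2$ (which soft-potential coercivity cannot absorb) by substituting $\partial_t f=\Gamma(f,f)-\mathcal L f$ into one factor and invoking the trilinear estimate, leaving only $\|f\|_A^2$, which is time-integrable by Corollary~\ref{existence3}. The remaining differences are organizational — you close with Gronwall's inequality using the integrable factor $1+\|f(s)\|_A^2$, while the paper repeatedly reduces the error terms to $\int_0^t s\|\partial_s f\|_{L^2}^2\,ds$ and absorbs the quadratic contribution via an $L^\infty$-in-time smallness argument.
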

\begin{proof}
    With the Cauchy problem \eqref{1-2}, we have that
for all $0<t\le T$,
    \begin{align*}
            &\frac12\|t\partial_tf\|^2_{L^2(\mathbb R^3)}+\int_0^t(\mathcal L_1(s\partial_sf), s\partial_sf)_{L^2(\mathbb R^3)}ds\\
            &=\int_0^ts\|\partial_sf\|^2_{L^2(\mathbb R^3)}ds-\int_0^t(\mathcal L_2(s\partial_sf), s\partial_sf)_{L^2(\mathbb R^3)}+\int_0^t(s\partial_s\Gamma(f, f),
        s\partial_s f)_{L^2(\mathbb R^3)}ds\\
            &=D_1+D_2+D_3.
    \end{align*}
It follows from \eqref{1-2} that
    \begin{align*}
            D_{1}&=\int_0^t(\Gamma(f, f),
        s\partial_s f)_{L^2(\mathbb R^3)}ds-\int_0^t(\mathcal L_1f, s\partial_sf)_{L^2(\mathbb R^3)}ds-\int_0^t(\mathcal L_2f, s\partial_sf)_{L^2(\mathbb R^3)}ds,
    \end{align*}
since $\mathcal L_{1}f=-\Gamma(\sqrt\mu, f)$ and $\mathcal L_{2}f=-\Gamma(f, \sqrt\mu)$, then follows immediately from Lemma \ref{trilinear} and \eqref{2-2} that for all $0<t\le T$
    \begin{align*}
            D_{1}
            \le C_{2}\varepsilon_{0}\int_0^t\|f(s)\|_{A}\|s\partial_sf\|_{A}ds+\tilde C_2\int_0^t\|f(s)\|_A\|s\partial_sf\|_Ads+\frac{\tilde C_2}{\sqrt{C_{1}}}\int_0^t\|f(s)\|_A\|s\partial_sf\|_Ads,
    \end{align*}
then from the Cauchy-Schwarz inequality, for any $0<\delta<1$, we have
    \begin{equation}\label{D1}
        \begin{split}
            D_{1}=\int_0^ts\|\partial_sf\|^2_{L^2(\mathbb R^3)}ds&\le\delta\int_0^t\|s\partial_sf\|^2_Ads+C_\delta\int_0^t\|f(s)\|^2_{A}ds,
        \end{split}
    \end{equation}
    with $C_\delta$ depends on $C_{1}, C_{2}$.

    For $D_2$, since $\mathcal L_{2}f=-\Gamma(f, \sqrt\mu)$, then by using Lemma \ref{trilinear} and \eqref{2-2},  we have for all $0<t\le T$
    \begin{align*}
            |D_2|\le\epsilon_2\int_0^t\|s\partial_sf\|^2_Ads+C_{\epsilon_2}T\int_0^ts\|\partial_sf\|^2_{L^2(\mathbb R^3)}ds,
    \end{align*}
    by using \eqref{D1} with $C_{\epsilon_2}T\delta\le\epsilon_2$,
    \begin{equation*}
        |D_2|\le2\epsilon_2\int_0^t\|s\partial_sf\|^2_Ads+\tilde C_{\epsilon_2}\int_0^t\|f(s)\|^2_{A}ds,
    \end{equation*}
    with $\tilde C_{\epsilon_2}$ depends on $C_{1}, C_2$ and $T$. Now, we trun to the term $S_3$, since $\gamma<0$, by using Lemma \ref{trilinear}, \eqref{energy-weighted11} and Cauchy-Schwarz inequality, one has
    \begin{align*}
            |D_3|
            &\le\int^t_0s^2\left|(\Gamma(\partial_s f,f),\partial_s f)\right|ds
              +\int^t_0s^2\left|(\Gamma(f,\partial_s f),\partial_s f)\right|ds\\
            &\le C_{2}\int^t_0\|s\partial_s f\|_{L^{2}}\|f(s)\|_{A}\|s\partial_s f\|_{A}+\|f(s)\|_{L^{2}}\|s\partial_s f\|^{2}_{A}ds\\
            &\le C_{2}\varepsilon_{1}\|s\partial_s f\|^{2}_{L^{\infty}(]0, t]; L^{2})}+2C_{2}\varepsilon_{1}\int^t_0\|s\partial_s f\|^{2}_{A}ds.
    \end{align*}
    It remains to consider the operator $\mathcal L_{1}$. Since $\gamma<0$, by lemma \ref{lemma2.1}, for all $0<t\le T$, we can conclude
    \begin{equation*}
        \begin{split}
            \int_0^t(\mathcal L_1(s\partial_sf), s\partial_sf)_{L^2(\mathbb R^3)}ds
            &\ge(1-\epsilon_1)\int_0^t\|s\partial_sf\|^2_Ads-C_{\epsilon_1}\int_0^t\|s\partial_sf\|^2_{2, \frac{\gamma}{2}}ds\\
            &\ge(1-\epsilon_1)\int_0^t\|s\partial_sf\|^2_Ads-TC_{\epsilon_1}\int_0^ts\|\partial_sf\|^2_{L^2(\mathbb R^3)}ds,
        \end{split}
    \end{equation*}
    applying \eqref{D1} with $TC_{\epsilon_1}\delta\le\epsilon_{1}$, one has
    \begin{align*}
            &\int_0^t(\mathcal L_1(s\partial_sf), s\partial_sf)_{L^2(\mathbb R^3)}ds\ge(1-2\epsilon_1)\int_0^t\|s\partial_sf\|^2_Ads-\tilde C_{\epsilon_1}\int_0^t\|f(s)\|^2_{A}ds.
    \end{align*}
    Combining these results and taking
    $$2\epsilon_{1}=\delta=2\epsilon_{2}=2C_{2}\varepsilon_{1}=\frac18,$$
    then we can get that for all $0<t\le T$
    \begin{align*}
            &\|t\partial_tf\|^2_{L^2}+\int_0^t\|s\partial_sf\|^2_Ads\le 2\tilde C_{8}\int_0^t\|f(s)\|^2_{A}ds+2(C_{2}\varepsilon_{1})^{2}\|s\partial_s f\|^{2}_{L^{\infty}(]0, t]; L^{2})},
    \end{align*}
    here $\tilde C_{8}$ depends on $C_{1}, C_{2}$ and $T$. Now, we choose  $2(C_{2}\varepsilon_{1})^{2}\le\frac12$, then
    $$\frac12\|t\partial_tf\|^2_{L^{\infty}(]0, T]; L^{2})}\le2\tilde C_{8}(\varepsilon_{1})^{2},$$
    from this, one can deduce that for all $0<t\le T$
    \begin{equation*}
        \begin{split}
            &\|t\partial_tf\|^2_{L^2(\mathbb R^3)}+\int_0^t\|s\partial_sf\|^2_Ads\le 4\tilde C_{8}(\varepsilon_{1})^{2}\le (C_{8}\epsilon)^{2},
        \end{split}
    \end{equation*}
    here $C_{8}\ge2\sqrt{\tilde C_{8}}$ depends on $C_{1}, C_2$ and $T$.
\end{proof}

Now, we turn to show the time analyticity.
\begin{prop}\label{prop 4.1}
    For $-3<\gamma<0$. Let $f$ be the solution of Cauchy problem \eqref{1-2} with $\|f\|_{L^{\infty}([0, \infty[; L^{2})}$ small enough. Then there exists a constant $B>0$ such that for any $T>0$, $t\in]0,T]$ and $k\in\mathbb N$,
    \begin{equation}\label{4-1}
        \|t^k\partial_t^kf(t)\|^2_{L^2(\mathbb R^3)}+\int_0^t\|s^k\partial_s^kf(s)\|^2_Ads\le\left(B^{k-2}(k-1)!\right)^2.
    \end{equation}
\end{prop}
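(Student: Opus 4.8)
\ The plan is a strong induction on $k$. The base cases are already available: $k=0$ is the energy bound \eqref{energy-weighted11} of Corollary \ref{existence3}, and $k=1$ is Lemma \ref{lemma3.2} (after enlarging $B$ so that $C_8\epsilon\le B^{-1}$ and the $k=0$ constant is absorbed). Note that, unlike for the velocity–derivative estimates, no exponential weight $\omega_t$ is needed here: differentiating \eqref{1-2} in $t$ does not interact with the spatial degeneracy of $\mathcal L_1$, and the only place where that degeneracy could be felt — the coercivity error term $\|\,\cdot\,\|^2_{2,\frac\gamma2}$ produced by Lemma \ref{lemma2.1} — is harmless since $\gamma<0$ gives $\|g\|^2_{2,\frac\gamma2}\le\|g\|^2_{L^2}$, which Gronwall's inequality absorbs.

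For the inductive step, assume \eqref{4-1} for all indices strictly less than $k$ and set $g_j:=t^j\partial_t^j f$. Differentiating \eqref{1-2} $k$ times in $t$ and using the bilinearity of $\Gamma$ (so that $t^k\Gamma(\partial_t^j f,\partial_t^{k-j}f)=\Gamma(g_j,g_{k-j})$) gives $\partial_t g_k+\mathcal L g_k=\frac kt g_k+\sum_{j=0}^{k}\binom kj\Gamma(g_j,g_{k-j})$. Pairing with $g_k$ in $L^2$, applying Lemma \ref{lemma2.1} to $(\mathcal L_1 g_k,g_k)$, and bounding $(\mathcal L_2 g_k,g_k)=-(\Gamma(g_k,\sqrt\mu),g_k)$ by Proposition \ref{trilinear} together with $\|g_k\|_{2,\frac\gamma2}\le\|g_k\|_{2,1+\frac\gamma2}\le C_1^{-1}\|g_k\|_A$ from \eqref{2-2}, one obtains, after choosing the small parameters in Lemma \ref{lemma2.1} and in Young's inequality appropriately,
\begin{equation*}
\frac12\frac{d}{dt}\|g_k\|^2_{L^2}+\frac12\|g_k\|^2_A\le C\|g_k\|^2_{L^2}+\frac kt\|g_k\|^2_{L^2}+\sum_{j=0}^{k}\binom kj\big|(\Gamma(g_j,g_{k-j}),g_k)_{L^2}\big|.
\end{equation*}
The term that cannot be handled as in the hard potential case is the singular term $\frac kt\|g_k\|^2_{L^2}$, which carries a dangerous factor $k$ that the soft–potential coercivity does not recover. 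Following the device already used for $k=1$ in the proof of Lemma \ref{lemma3.2}, I remove it by substituting the equation for $\partial_t^k f$ and multiplying by $t^{k-1}$:
\begin{equation*}
\frac kt\|g_k\|^2_{L^2}=k\big(t^{k-1}\partial_t^k f,g_k\big)_{L^2}=-k(\mathcal L g_{k-1},g_k)_{L^2}+k\sum_{j=0}^{k-1}\binom{k-1}{j}(\Gamma(g_j,g_{k-1-j}),g_k)_{L^2}.
\end{equation*}
Writing $\mathcal L=-\Gamma(\sqrt\mu,\cdot)-\Gamma(\cdot,\sqrt\mu)$, the linear part is $\lesssim k\|g_{k-1}\|_A\|g_k\|_A$ by Proposition \ref{trilinear} and \eqref{2-2}, hence after a Young split and time integration it is $\lesssim k^2\big(B^{k-3}(k-2)!\big)^2\le 4B^{-2}\big(B^{k-2}(k-1)!\big)^2$ by the induction hypothesis, which is absorbable once $B$ is large.

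It remains to treat the two quadratic sums (the one in the energy inequality and the one just produced). In each, the extreme indices — $j\in\{0,k\}$, respectively $j\in\{0,k-1\}$ — involve $\|f\|_{2,\frac\gamma2}\le\|f\|_{L^2}\le\varepsilon$ or $\|f\|_A$ with $\int_0^t\|f\|^2_A\le C\varepsilon^2$ by \eqref{energy-weighted11}, so they are controlled by the smallness of $\varepsilon$ after absorbing small multiples of $\|g_k\|^2_A$ and of $\|g_k\|^2_{L^\infty([0,t];L^2)}$. For the interior indices $1\le j\le k-1$, Proposition \ref{trilinear} bounds each term by $\binom kj\|g_j\|_{2,\frac\gamma2}\|g_{k-j}\|_A\|g_k\|_A$; estimating $\|g_j\|_{2,\frac\gamma2}\le\|g_j\|_{L^\infty([0,t];L^2)}\le B^{j-2}(j-1)!$ by the induction hypothesis, using Cauchy–Schwarz in time so that $\int_0^t\|g_{k-j}\|_A\|g_k\|_A\,ds\le\big(\int_0^t\|g_{k-j}\|^2_A\big)^{1/2}\big(\int_0^t\|g_k\|^2_A\big)^{1/2}$ with the induction hypothesis on the first factor, and applying Young's inequality, the interior contribution is $\frac18\int_0^t\|g_k\|^2_A\,ds$ plus a constant times the square of $B^{k-4}\sum_{j=1}^{k-1}\binom kj(j-1)!(k-j-1)!$; the combinatorial identity $\binom kj(j-1)!(k-j-1)!=\frac{k}{j(k-j)}(k-1)!$ shows this sum has order $(k-1)!$ up to a slowly growing factor, which in the standard way is absorbed into $B$ so that the target growth $B^{k-2}(k-1)!$ is preserved. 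Integrating the differential inequality over $[0,T]$, absorbing all $\|g_k\|^2_A$– and $\|g_k\|^2_{L^\infty L^2}$–terms into the left–hand side, and closing the residual $C\|g_k\|^2_{L^2}$ term by Gronwall's inequality, completes the inductive step: one chooses the small parameters first, then $B$ large (depending on $T$ and on $C_1,\dots,C_8$), and finally $\varepsilon$ small.

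The main obstacle is precisely the singular term $\frac kt\|g_k\|^2_{L^2}$: its direct bound loses a full power of $k$ that, for soft potentials, cannot be recovered through the $\|\,\cdot\,\|_A$ norm, so it must be exchanged — via the equation itself — for a commutator–type expression one level down, and the extra factor $k$ thereby generated must be balanced against the binomial convolution sums so as not to spoil the factorial growth $B^{k-2}(k-1)!$. Everything else (the linear pieces, the quadratic sums, the role of the smallness of $\varepsilon$, and the Gronwall closure) runs parallel to the $k=1$ computation in Lemma \ref{lemma3.2} and to the hard–potential arguments.
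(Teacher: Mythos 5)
Your proposal is correct and follows essentially the same route as the paper: induction on $k$ with base cases from \eqref{energy-weighted11} and Lemma \ref{lemma3.2}, and the key device of exchanging the singular term $\frac kt\|g_k\|^2_{L^2}=kt^{2k-1}\|\partial_t^kf\|^2_{L^2}$ for lower-order expressions by substituting the equation for $\partial_t^kf$, which is exactly the paper's treatment of the term $J_1$ (decomposed there into $J_{1,1}$--$J_{1,4}$). The handling of the binomial convolution sums, the extreme versus interior indices, the absorption of small multiples of $\int\|g_k\|^2_A$ and $\|g_k\|^2_{L^\infty L^2}$, and the final Gronwall closure all match the paper's argument.
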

\begin{proof}
    We prove this proposition by induction on the index $k$. For $k=0, 1$, it is enough to take in \eqref{energy-weighted11} and \eqref{k=1}. Assume \eqref{4-1} holds true, for any $1\le m\le k-1$ with $k\ge2$,
    \begin{equation}\label{induction-m}
        \|t^m\partial_t^mf(t)\|^2_{L^2(\mathbb R^3)}+\int_0^t\|s^m\partial_s^mf(s)\|^2_Ads\le\left(B^{m-1}m!\right)^2.
    \end{equation}
    We shall prove \eqref{induction-m} holds for $m=k$. Since $\mu$ is the function with respect to $v$, which implies
    \begin{align*}
        t^k\partial_t^k\mathcal L_1f=\mathcal L_1(t^k\partial_t^kf), \quad t^k\partial_t^k\mathcal L_2f=\mathcal L_2(t^k\partial_t^kf).
    \end{align*}
    Then by \eqref{1-2}, we have
\begin{equation*}
\begin{split}
        \partial_t (t^k\partial^k_t f)&+\mathcal{L}_1(t^k\partial^k_t f)
        =k t^{k-1}\partial^k_t f-\mathcal{L}_2(t^k\partial^k_t f)+\Gamma(f, t^k\partial^k_t f)\\
        &+\Gamma(t^k\partial^k_t f, f)+\sum_{1\le j\le k-1}C^j_k\ \Gamma(t^{j}\partial^{j}_tf, t^{k-j}\partial^{k-j}_t f),
\end{split}
\end{equation*}
where $C^j_k=\frac{k!}{j!(k-j)!}$. Then taking the $L^2(\mathbb{R}^3)$ inner product of both sides with respect to $t^k\partial^k_tf$, we have
\begin{align*}
&\frac{1}{2}\frac{d}{dt}\|t^k\partial^k_tf\|_{L^2}^2+(\mathcal{L}_1(t^{k}\partial^k_tf), t^{k}\partial^k_tf)_{L^2}=kt^{2k-1}\|\partial^k_tf\|_{L^2}^2-(\mathcal{L}_2(t^{k}\partial^k_tf), t^{k}\partial^k_tf)_{L^2}\\
&\qquad \qquad+(\Gamma(f, t^k\partial^k_tf), t^k\partial^k_tf)_{L^2} +(\Gamma(t^k\partial^k_tf, f), t^k\partial^k_tf)_{L^2}
\\
&\qquad \qquad +\sum_{1\le j\le k-1}C^j_k\ \Gamma(t^{j}\partial^{j}_tf, t^{k-j}\partial^{k-j}_t f), t^k\partial^k_tf)_{L^2}.
\end{align*}
    For all $0<t\le T$, integrating from 0 to $t$, since $\gamma<0$, by using lemma \ref{lemma2.1}, it follows that
    \begin{align*}
            &\int_0^t(\mathcal L_1(s^k\partial_s^kf) ,s^k\partial_s^kf)_{L^2(\mathbb R^3)}ds
            \ge(1-\epsilon_1)\int_0^t\|s^k\partial_s^kf\|^2_Ads-TC_{\epsilon_1}\int_0^ts^{2k-1}\|\partial_s^kf\|^2_{L^2(\mathbb R^3)}ds,
    \end{align*}
    and since $\mathcal L_{2}=-\Gamma(f, \sqrt\mu)$, from Lemma \ref{trilinear}, we have
    \begin{equation*}
        \begin{split}
            \int_0^t|(\mathcal L_2(s^k\partial_s^kf) ,&s^k\partial_s^kf)_{L^2}|ds\le\epsilon_2\int_0^t\|s^k\partial_s^kf\|^2_Ads+TC_{\epsilon_2}\int_0^ts^{2k-1}\|\partial_s^kf\|^2_{L^2}ds.
        \end{split}
    \end{equation*}
    Combining the results above and taking
    $$\epsilon_1=\epsilon_2=\frac18,$$
    we have for all $0<t\le T$,
    \begin{align*}
            &\|t^k\partial_t^kf(t)\|^2_{L^2(\mathbb R^3)}+\frac32\int_0^t\|s^k\partial_s^kf\|^2_Ads\\
            &\le C_{9}\int_0^ts^{2k-1}\|\partial_s^kf\|^2_{L^2(\mathbb R^3)}ds+2\sum_{1\le j\le k-1}C^j_k\int_0^t\Gamma(s^{j}\partial^{j}_sf, s^{k-j}\partial^{k-j}_s f), s^k\partial^k_sf)_{L^2}ds\\
            &\quad+2\int_0^t(\Gamma(f, s^k\partial^k_sf), s^k\partial^k_sf)_{L^2}ds+2\int_0^t(\Gamma(s^k\partial^k_sf, f), s^k\partial^k_sf)_{L^2}ds
\\
&=J_{1}+J_{2}+J_{3}+J_{4},
    \end{align*}
    with $C_{9}$ depends on $C_{1}, C_{2}$ and $T$.

    For term $J_{2}$, by using Lemma \ref{trilinear} and the fact $\gamma<0$, one has
    \begin{align*}
            J_{2}&\le 2C_{2}\sum_{1\le j\le k-1}C^j_k\int_{0}^{t}\|s^{j}\partial^{j}_{s}f\|_{L^{2}}\|s^{k-j}\partial^{k-j}_{s}f\|_A\|s^k\partial^k_sf\|_{A}ds,
    \end{align*}
   then by using Cauchy-Schwarz inequality and \eqref{induction-m}, it follows that
    \begin{align*}
            J_{2}&\le2C_{2}B^{k-2}\sum_{1\le j\le k-1}C^j_k(j-2)!(k-j-2)!\left(\int_{0}^{t}\|s^k\partial^k_sf\|^{2}_{A}ds\right)^{\frac12}\\
            &\le\frac18\int_{0}^{t}\|s^k\partial^k_sf\|^{2}_{A}ds+2\left(2C_{2}B^{k-2}\sum_{1\le j\le k-1}C^j_k(j-2)!(k-j-2)!\right)^{2}\\
            &\le\frac18\int_{0}^{t}\|s^k\partial^k_sf\|^{2}_{A}ds+2\left(64C_{2}B^{k-2}(k-2)!\right)^{2},
    \end{align*}
    here we use
    $$\sum_{1\le j\le k-1}C^j_k(j-2)!(k-j-2)!\le32(k-2)!.$$
    For the term $J_{3}$ and $J_{4}$, by using \eqref{trilinear} and the fact $\gamma<0$, one has
    \begin{align*}
            J_{3}+J_{4}&\le 2C_{2}\int_{0}^{t}\left(\|f(s)\|_{L^{2}}\|s^k\partial^k_sf\|^{2}_{A}+\|f(s)\|_{A}\|s^k\partial^k_sf\|_{L^{2}}\|s^k\partial^k_sf\|_{A}\right)ds\\
            &\le2C_{2}\varepsilon_{1}\int_{0}^{t}\|s^k\partial^k_sf\|^{2}_{A}ds+2C_{2}\varepsilon_{1}\|s^{k}\partial^{k}_{s}f\|_{L^{\infty}(]0, t]; L^{2})}\left(\int_{0}^{t}\|s^k\partial^k_sf\|^{2}_{A}ds\right)^{\frac12}\\
            &\le4C_{2}\varepsilon_{1}\int_{0}^{t}\|s^k\partial^k_sf\|^{2}_{A}ds+\frac12C_{2}\varepsilon_{1}\|s^{k}\partial^{k}_{s}f\|^{2}_{L^{\infty}(]0, t]; L^{2})}.
    \end{align*}
    Then taking $4C_{2}\varepsilon_{1}\le\frac{1}{8}$, we have
    \begin{align*}
            J_{3}+J_{4}&\le\frac18\int_{0}^{t}\|s^k\partial^k_sf\|^{2}_{A}ds+\frac{1}{16}\|s^{k}\partial^{k}_{s}f\|^{2}_{L^{\infty}(]0, t]; L^{2})}.
    \end{align*}
    It remains to consider the term $J_{1}$, since $f$ is the solution of \eqref{1-2} and $k\ge2$, then
    $$
    \partial_t^kf=\partial_t^{k-1}\Gamma(f, f)-\mathcal L_1(\partial_t^{k-1}f)-\mathcal L_2(\partial_t^{k-1}f),$$
    which implies
    \begin{align*}
            J_{1}&=C_{9}\int_0^t(\Gamma(f, s^{k-1}\partial_s^{k-1}f), s^k\partial_s^kf)_{L^2}+(\Gamma(s^{k-1}\partial_s^{k-1}f, f), s^k\partial_s^kf)_{L^2}ds\\
            &\quad+C_{9}\sum_{1\le j\le k-2}C_{k-1}^{j}\int_0^t(\Gamma(s^{j}\partial_s^{j}f, s^{k-1-j}\partial_s^{k-1-j}f), s^k\partial_s^kf)_{L^2}ds\\
            &\quad-C_{9}\int_0^t(\mathcal L_1(s^{k-1}\partial_s^{k-1}f), s^k\partial_s^kf)_{L^2(\mathbb R^3)}ds-\int_0^t(\mathcal L_2(s^{k-1}\partial_s^{k-1}f), s^k\partial_s^kf)_{L^2(\mathbb R^3)}ds\\
            &=J_{1, 1}+J_{1, 2}+J_{1, 3}+J_{1, 4}.
    \end{align*}
    For $J_{1, 1}$ and $J_{1, 2}$, by using Lemma \ref{trilinear}, \eqref{induction-m}, and the fact $\gamma<0$, we have
    \begin{align*}
            &\left|J_{1, 1}+J_{1, 2}\right|
            \le2C_{2}C_{9}\varepsilon_{0} B^{k-2}(k-3)!\left(\int_0^t\|s^{k}\partial_s^{k}f\|^{2}_{A}ds\right)^{\frac12}\\
            &\quad+C_{2}C_{9}\sum_{1\le j\le k-2}C_{k-1}^{j}B^{k-3}(j-2)!(k-j-3)!\left(\int_0^t\|s^{k}\partial_s^{k}f\|^{2}_{A}ds\right)^{\frac12}\\
            &\le2\left(64C_{2}C_{9}B^{k-3}(k-2)!\right)^{2}+\left(\left(\varepsilon_{1}\right)^{2}+\frac{1}{16}\right)\int_0^t\|s^{k}\partial_s^{k}f\|^{2}_{A}ds
    \end{align*}
    From Lemma \ref{trilinear}, then applying \eqref{2-1} and \eqref{induction-m}, it follows that
     \begin{align*}
            \left|J_{1, 3}+J_{1, 4}\right|
            &\le C_{9}\left(\tilde C_{2}+\frac{\tilde C_{2}}{\sqrt{C_{1}}}\right)B^{k-2}(k-3)!\left(\int_0^t\|s^{k}\partial_s^{k}f\|^{2}_{A}ds\right)^{\frac12}\\
            &\le\frac{1}{8}\int_0^t\|s^{k}\partial_s^{k}f\|^{2}_{A}ds+2\left(C_{9}\left(\tilde C_{2}+\frac{\tilde C_{2}}{\sqrt{C_{1}}}\right)B^{k-2}(k-3)!\right)^{2}.
    \end{align*}
    So that, taking $\left(\varepsilon_{1}\right)^{2}\le\frac{1}{16}$, we can get that
    \begin{equation*}
        \begin{split}
            &J_{1}\le\frac14\int_0^t\|s^{k}\partial_s^{k}f\|^{2}_{A}ds+2\left(C_{9}\left(\tilde C_{2}+\frac{\tilde C_{2}}{\sqrt{C_{1}}}\right)B^{k-2}(k-3)!\right)^{2}+2\left(64C_{2}C_{9}B^{k-3}(k-2)!\right)^{2}
        \end{split}
    \end{equation*}
    Combining the results of $J_{1}-J_{4}$, we have for all $0<t\le T$,
    \begin{equation*}
        \begin{split}
            &\|t^k\partial_t^kf(t)\|^2_{L^2(\mathbb R^3)}+\int_0^t\|s^k\partial_s^kf\|^2_Ads\le\tilde C_{9}\left(B^{k-2}(k-2)!\right)^{2}+\frac{1}{2}\|t^{k}\partial^{k}_{t}f\|^{2}_{L^{\infty}(]0, T]; L^{2})},
        \end{split}
    \end{equation*}
    from this, we can get that for all $T>0$
    $$\frac{1}{2}\|t^{k}\partial^{k}_{t}f\|^{2}_{L^{\infty}(]0, T]; L^{2})}\le\tilde C_{9}\left(B^{k-2}(k-2)!\right)^{2},$$
    and therefore, we have for all $0<t\le T$,
    \begin{equation*}
        \begin{split}
            &\|t^k\partial_t^kf(t)\|^2_{L^2(\mathbb R^3)}+\int_0^t\|s^k\partial_s^kf\|^2_Ads\le 2\tilde C_{9}\left(B^{k-2}(k-2)!\right)^{2}.
        \end{split}
    \end{equation*}
    Finally, taking $B\ge\sqrt{2\tilde C_{9}}$, we can get that for all $0<t\le T$,
    $$\|t^k\partial_t^kf(t)\|^2_{L^2(\mathbb R^3)}+\int_0^t\|s^k\partial_s^kf(s)\|^2_Ads\le\left(B^{k-2}(k-1)!\right)^2.$$
\end{proof}

\section{Gelfand-Shilov Regularizing Effect}  \label{sect6}
In this section, we will show the Gelfand-Shilov regularizing effect with the sub-exponential weighted functions.
\begin{prop}\label{prop-v}
    Assume that $-3<\gamma<0$. Let $f$ be the solution of Cauchy problem \eqref{1-2} satisfying $\|\omega_{t} f\|_{L^{\infty}([0, \infty[; L^{2}(\mathbb R^{3}))} $ small enough. Then there exists a positive constant $L>1$, depends on $\gamma, \ b, \ c_{0}$ and $T$, such that
    \begin{align}\label{high-v}
    \begin{split}
        &\left\|\omega_{t} t^{m\sigma}\nabla_{\mathcal H_{+}}^{m} f(t)\right\|^2_{L^2(\mathbb R^3)}+\frac{c_{0}}{(1+T)^{2}}\int_{0}^{t}\left\|\omega_{\tau} \tau^{m\sigma}\langle\cdot\rangle^{\frac b2}\nabla_{\mathcal H_{+}}^{m} f(\tau)\right\|^2_{L^2(\mathbb R^3)}d\tau\\
        &\quad+\int_0^t\left\|\omega_{\tau}\tau^{m\sigma}\nabla_{\mathcal H_{+}}^{m}f(\tau)\right\|^2_{A}d\tau\le \left(L^{m-1}(m-2)!\right)^{2\sigma}, \  \forall \ m\in\mathbb N, \ \forall \ t\in]0, T].
    \end{split}
    \end{align}
\end{prop}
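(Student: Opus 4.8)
The plan is to establish \eqref{high-v} by induction on $m$, with the convention that $(m-2)!=1$ for $m\le 2$; the base cases $m=0,1$ are Proposition \ref{existence2}. So assume \eqref{high-v} holds with $m$ replaced by every integer $0\le p\le m-1$, and prove it for $m$. Starting from \eqref{1-2} and the weighted equation $\partial_t(\omega_t f)+\tfrac{c_0}{(1+t)^2}\langle v\rangle^b(\omega_t f)+\omega_t\mathcal L f=\omega_t\Gamma(f,f)$, apply $\nabla^m_{\mathcal H_+}$, multiply by $t^{m\sigma}$, pair in $L^2$ with $\omega_t t^{m\sigma}\nabla^m_{\mathcal H_+}f$, and use $\partial_t(t^{m\sigma}g)=t^{m\sigma}\partial_t g+(m\sigma)t^{m\sigma-1}g$; this produces the weighted $L^2$–identity of Section \ref{sch}. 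Writing $\mathcal L=\mathcal L_1+\mathcal L_2$ and $\omega_t\nabla^m_{\mathcal H_+}\mathcal L_1 f=\mathcal L_1(\omega_t\nabla^m_{\mathcal H_+}f)+[\omega_t\nabla^m_{\mathcal H_+},\mathcal L_1]f$, Lemma \ref{lemma2.1} bounds the principal $\mathcal L_1$–contribution below by $\tfrac12\|\omega_t t^{m\sigma}\nabla^m_{\mathcal H_+}f\|_A^2-C_3\|\omega_t t^{m\sigma}\nabla^m_{\mathcal H_+}f\|_{2,\gamma/2}^2$, the commutator is estimated by \eqref{2.9} (its $\tfrac18\|\omega_t t^{m\sigma}\nabla^m_{\mathcal H_+}f\|_A^2$ being absorbed on the left), the $\mathcal L_2$–term by \eqref{2.8}, and the nonlinear term, via the Leibniz formula \eqref{Leibniz-nonlinear}, by \eqref{2.7}.

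The term $(m\sigma)t^{2m\sigma-1}\|\omega_t\nabla^m_{\mathcal H_+}f\|_{L^2}^2$ is the delicate one, and is handled by the interpolation computation already carried out in Section \ref{sch}: splitting $|\omega_t\nabla^m_{\mathcal H_+}f|^2$ into a $\tfrac{-\gamma}{b-\gamma}$–power of $|\langle v\rangle^{b/2}\omega_t\nabla^m_{\mathcal H_+}f|^2$ and a $\tfrac{b}{b-\gamma}$–power of $|\langle v\rangle^{\gamma/2}\omega_t\nabla^m_{\mathcal H_+}f|^2$, applying Hölder, using $\tfrac{2b\sigma}{b-\gamma}-1\ge0$ (which holds since $\sigma\ge\tfrac{b-\gamma}{2b}$) together with \eqref{2-2-1}, and Young's inequality, one gets
\[
(m\sigma)t^{2m\sigma-1}\|\omega_t\nabla^m_{\mathcal H_+}f\|_{L^2}^2\le\epsilon\|\langle\cdot\rangle^{\frac b2}\omega_t t^{m\sigma}\nabla^m_{\mathcal H_+}f\|_{L^2}^2+\tilde C_\epsilon(m-1)^{2\sigma}\|\omega_t t^{(m-1)\sigma}\nabla^{m-1}_{\mathcal H_+}f\|_A^2.
\]
Choosing $\epsilon<\tfrac{c_0}{(1+T)^2}$, the first term is absorbed into the gained term $\tfrac{c_0}{(1+t)^2}\|\langle\cdot\rangle^{b/2}\omega_t t^{m\sigma}\nabla^m_{\mathcal H_+}f\|_{L^2}^2$ on the left-hand side of the identity — which is exactly why the exponential weight $\omega_t$ is introduced — while the second term carries the factor $(m-1)^{2\sigma}$ to be reckoned with in the induction.

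Next I integrate the differential inequality over $[0,t]$. Since $\langle v\rangle^{\gamma/2}\le1$ one has $\|\cdot\|_{2,\gamma/2}\le\|\cdot\|_{L^2}$, so the terms $C_3\int_0^t\|\omega_\tau\tau^{m\sigma}\nabla^m_{\mathcal H_+}f\|_{2,\gamma/2}^2\,d\tau$ are removed by Gronwall's inequality (against the $L^2$–norm on the left). Each bilinear term $\|\omega_\tau\tau^{\bullet}\nabla^{p}_{\mathcal H_+}f\|_A\|\omega_\tau\tau^{m\sigma}\nabla^m_{\mathcal H_+}f\|_A$ coming from \eqref{2.7}--\eqref{2.9} and from the preceding display is split, after time integration, by Cauchy–Schwarz: the $\nabla^m_{\mathcal H_+}f$ factor yields a small multiple of $\int_0^t\|\omega_\tau\tau^{m\sigma}\nabla^m_{\mathcal H_+}f\|_A^2\,d\tau$, absorbed on the left (using the smallness of $\|\omega_t f\|_{L^\infty_t L^2}$, as in Lemma \ref{lemma3.2}, for the $p=0$ and $p=m$ contributions in \eqref{2.7} which are not of lower order), and the factor with $p\le m-1$ (or $p-1\le m-1$) is, by the induction hypothesis, at most $(L^{p-1}(p-2)!)^{2\sigma}$; the $\mathcal L_2$–contribution is treated the same way, noting that $\|\nabla^p_{\mathcal H_+}f\|_{2,\gamma/2}$ is small for $p=0$ and is controlled by the $L^2$–part of \eqref{high-v} for $p\ge1$. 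Collecting everything, one arrives at
\[
\|\omega_t t^{m\sigma}\nabla^m_{\mathcal H_+}f(t)\|_{L^2}^2+\cdots\le\tfrac12\|\omega_t t^{m\sigma}\nabla^m_{\mathcal H_+}f\|_{L^\infty_t L^2}^2+C\Big(\sum_{p=0}^{m-1}\binom mp\sqrt{(m-p+3)!}\,(L^{p-1}(p-2)!)^{\sigma}\Big)^2+C(m^2+1)(L^{m-2}(m-3)!)^{2\sigma},
\]
where $C$ depends on $\gamma,b,c_0,T$ and on $C_1,\dots,C_9$ but not on $m$ or $L$.

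The main obstacle is now the purely combinatorial step of showing this right-hand side is $\le\tfrac12\|\omega_t t^{m\sigma}\nabla^m_{\mathcal H_+}f\|_{L^\infty_t L^2}^2+\tfrac14(L^{m-1}(m-2)!)^{2\sigma}$ for $L$ large. One uses $\sigma\ge1$ to pull $\sigma$–powers through sums ($\sum_j a_j^{2\sigma}\le(\sum_j a_j)^{2\sigma}$, $\binom mp^2\le\binom mp^{2\sigma}$), the elementary bounds $p!q!\le(p+q)!\le2^{p+q}p!q!$, the factorial estimates such as $\binom mp(p-2)!\sqrt{(m-p+3)!}\le K^m(m-2)!$ with a convergent $p$–series (so that no extra power of $m$ is lost on summation), and $m(m-3)!\le3(m-2)!$; the net effect is a bound $CK^{2\sigma(m-2)}((m-2)!)^{2\sigma}+C\,3^{2\sigma}L^{2\sigma(m-2)}((m-2)!)^{2\sigma}$, which is $\le\tfrac14(L^{m-1}(m-2)!)^{2\sigma}$ once $L>K$ is taken large enough. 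The subtle point is the top-order terms: the $(m-1)^{2\sigma}$ factor from the critical term and the $p=m-1,\,p=m$ terms of \eqref{2.9} (carrying $\binom m{m-1}=m$, respectively an extra $p=m$) all produce $\|\omega_\tau\tau^{(m-1)\sigma}\nabla^{m-1}_{\mathcal H_+}f\|_A^2\sim(L^{m-2}(m-3)!)^{2\sigma}$ with only a polynomial-in-$m$ prefactor, and closing the induction requires that prefactor to be swallowed by exactly one spare power of $L$ together with the gap $(m-2)!/(m-3)!=m-2$; this is precisely what dictates the time power $\sigma=\max\{1,\tfrac{b-\gamma}{2b}\}$ and the choice of $\omega_t$. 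Taking the supremum in $t\in]0,T]$, feeding back the $L^\infty_t$ term, and returning to the integrated inequality completes the induction; the remaining manipulations are routine adaptations of the hard-potential arguments of \cite{L-4,LX-5}.
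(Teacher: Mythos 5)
Your proposal follows essentially the same route as the paper's proof: induction on $m$ via the weighted energy identity, coercivity from Lemma \ref{lemma2.1} together with the commutator estimates \eqref{2.7}--\eqref{2.9}, the interpolation of $(m\sigma)t^{2m\sigma-1}\|\omega_t\nabla^m_{\mathcal H_+}f\|^2_{L^2}$ between $\langle\cdot\rangle^{b/2}$ and $\langle\cdot\rangle^{\gamma/2}$ weights with absorption into the gained term $\tfrac{c_0}{(1+t)^2}\|\langle\cdot\rangle^{b/2}\omega_t t^{m\sigma}\nabla^m_{\mathcal H_+}f\|^2_{L^2}$, the factorial summation bounds of the type \eqref{summation}, the $L^\infty_t$ feedback, and Gronwall. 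The only slip is labeling $m=1$ a base case of Proposition \ref{existence2} (which gives only $m=0$); this is harmless because your induction step with hypothesis down to $p=0$ is exactly how the paper treats $m=1$.
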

\begin{proof}
We prove this proposition by induction on the index $m$. For $m=0$, it has been proved in \eqref{energy-weighted1}. From \eqref{1-2} and Lemma \ref{lemma2.1}, we have
\begin{align*}
     &\frac12\frac{d}{dt}\left\|\omega_{t} t^{m\sigma}\nabla_{\mathcal H_{+}}^{m} f(t)\right\|^2_{L^2(\mathbb R^3)}+\frac{c_{0}}{(1+t)^{2}}\left\|\langle\cdot\rangle^{\frac b2}\omega_{t} t^{m\sigma}\nabla_{\mathcal H_{+}}^{m} f(t)\right\|^{2}_{L^{2}(\mathbb R^{3})}+\frac78\left\|\omega_{t} t^{m\sigma}\nabla_{\mathcal H_{+}}^{m} f(t)\right\|^{2}_{A}\\
     &\le m\sigma t^{2m\sigma-1}\left\|\omega_{t} \nabla_{\mathcal H_{+}}^{m} f(t)\right\|^{2}_{L^{2}(\mathbb R^{3})}+\left|\left(\omega_{t} t^{m\sigma}\nabla_{\mathcal H_{+}}^{m} \Gamma(f, f), \omega_{t} t^{m\sigma}\nabla_{\mathcal H_{+}}^{m} f\right)_{L^{2}(\mathbb R^{3})}\right|\\
     &\quad+\left|\left(\omega_{t}t^{m\sigma}\nabla_{\mathcal H_{+}}^{m}\mathcal L_{2}f, \omega_{t}t^{m\sigma}\nabla_{\mathcal H_{+}}^{m}f\right)_{L^{2}(\mathbb R^{3})}\right|+\left|\left(t^{m\sigma}[\omega_{t}\nabla_{\mathcal H_{+}}^{m},\ \mathcal L_{1}]f, \omega_{t}t^{m\sigma}\nabla_{\mathcal H_{+}}^{m}f\right)_{L^{2}(\mathbb R^{3})}\right|.
\end{align*}
For all $t\in]0, T]$, integrating from $0$ to $t$,
\begin{align*}
     &\left\|\omega_{t} t^{m\sigma}\nabla_{\mathcal H_{+}}^{m} f(t)\right\|^2_{L^2(\mathbb R^3)}+\frac{2c_{0}}{(1+T)^{2}}\int_{0}^{t}\left\|\langle\cdot\rangle^{\frac b2}\omega_{\tau}\tau^{m\sigma}\nabla_{\mathcal H_{+}}^{m} f(\tau)\right\|^{2}_{L^{2}(\mathbb R^{3})}d\tau+\frac32\int_{0}^{t}\left\|\omega_{\tau}\tau^{m\sigma}\nabla_{\mathcal H_{+}}^{m} f(\tau)\right\|^{2}_{A}d\tau\\
     &\le2m\sigma\int_{0}^{t}\tau^{2m\sigma-1}\left\|\omega_{\tau} \nabla_{\mathcal H_{+}}^{m} f(\tau)\right\|^{2}_{L^{2}(\mathbb R^{3})}d\tau+2C_{3}\int_{0}^{t}\left\|\omega_{\tau}\tau^{m\sigma}\nabla_{\mathcal H_{+}}^{m} f(\tau)\right\|^{2}_{2, \frac\gamma2}d\tau+Q_{1}+Q_{2}+Q_{3},
\end{align*}
with
\begin{align*}
     &Q_{1}=2C_{7}\sum_{p=0}^{m-1}\binom{m}{p}\sqrt{\frac{(m-p+2)!}{2}}\int_{0}^{t}\tau^{(m-p)\sigma}\left\|\omega_{\tau} \tau^{p\sigma}\nabla^{p}_{\mathcal H_{+}}f(\tau)\right\|_{A}\left\|\omega_{\tau} \tau^{m\sigma}\nabla^{m}_{\mathcal H_{+}}f(\tau)\right\|_{A}d\tau\\
     &\quad+2C_{7}\sum_{p=1}^{m}\binom{m}{p}p\sqrt{\frac{(m-p+2)!}{2}}\int_{0}^{t}\tau^{(m-p+1)\sigma}\left\|\omega_{\tau} \tau^{(p-1)\sigma}\nabla^{p-1}_{\mathcal H_{+}}f(\tau)\right\|_{A}\left\|\omega_{\tau} \tau^{m\sigma}\nabla^{m}_{\mathcal H_{+}}f(\tau)\right\|_{A}d\tau,
\end{align*}
and
\begin{equation*}
\begin{split}
     Q_{2}=&2\int_{0}^{t}\left|\left(\omega_{\tau} \tau^{m\sigma}\nabla^{m}_{\mathcal H_{+}}\mathcal L_{2}f(\tau), \omega_{\tau} \tau^{m\sigma}\nabla^{m}_{\mathcal H_{+}}f(\tau)\right)_{L^{2}(\mathbb R^{3})}\right|d\tau,
\end{split}
\end{equation*}
and
\begin{equation*}
\begin{split}
     Q_{3}=&2\int_{0}^{t}\left|\left(\omega_{\tau}\tau^{m\sigma}\nabla^{m}_{\mathcal H_{+}}\Gamma(f, f), \omega_{\tau}\tau^{m\sigma}\nabla^{m}_{\mathcal H_{+}}f\right)_{L^{2}(\mathbb R^{3})}\right|d\tau.
\end{split}
\end{equation*}
For $m=1$, it follows immediately from Proposition \ref{prop2.1}, \eqref{energy-weighted1} that
\begin{align*}
     &\left\|\omega_{t} t^{\sigma}\nabla_{\mathcal H_{+}} f(t)\right\|^2_{L^2(\mathbb R^3)}+\frac{2c_{0}}{(1+T)^{2}}\int_{0}^{t}\left\|\langle\cdot\rangle^{\frac b2}\omega_{\tau}\tau^{\sigma}\nabla_{\mathcal H_{+}} f(\tau)\right\|^{2}_{L^{2}(\mathbb R^{3})}d\tau+\int_{0}^{t}\left\|\omega_{\tau}\tau^{\sigma}\nabla_{\mathcal H_{+}} f(\tau)\right\|^{2}_{A}d\tau\\
     &\le2\sigma\int_{0}^{t}\tau^{2\sigma-1}\left\|\omega_{\tau} \nabla_{\mathcal H_{+}} f(\tau)\right\|^{2}_{L^{2}(\mathbb R^{3})}d\tau+\left(2C_{3}+(4C_{6})^{2}\right)\int_{0}^{t}\left\|\omega_{\tau}\tau^{\sigma}\nabla_{\mathcal H_{+}} f(\tau)\right\|^{2}_{L^{2}(\mathbb R^{3})}d\tau\\
     &\quad+\frac12\left\|\omega_{t} t^{\sigma}\nabla_{\mathcal H_{+}}f\right\|^{2}_{L^{\infty}(]0, c_{0}/2]; L^{2}(\mathbb R^{3}))}+\left((4C_{6})^{2}C_{5}+(4C_{7})^{2}\right)(T+1)^{2\sigma}(\varepsilon_{1})^{2},
\end{align*}
if we choose $C_{4}\varepsilon_{1}\le\frac14$. For the first integral above, note that $0<\frac{-\gamma}{b-\gamma}, \frac{b}{b-\gamma}<1$, by applying H$\rm\ddot o$lder's inequality, one has
\begin{align*}
     \left\|\omega_{t} \nabla_{\mathcal H_{+}} f(\tau)\right\|^{2}_{L^{2}(\mathbb R^{3})}
     &\le\left\|\langle\cdot\rangle^{\frac b2}\omega_{t} \nabla_{\mathcal H_{+}} f\right\|^{\frac{-2\gamma}{b-\gamma}}_{L^{2}(\mathbb R^{3})}\left\|\langle\cdot\rangle^{\frac \gamma2}\omega_{t}\nabla_{\mathcal H_{+}} f\right\|^{\frac{2b}{b-\gamma}}_{L^{2}(\mathbb R^{3})},
\end{align*}
it follows from \eqref{2-2} and \eqref{2-2-1} that
\begin{align*}
     \left\|\langle\cdot\rangle^{\frac \gamma2}\omega_{t} \nabla_{\mathcal H_{+}} f\right\|_{L^{2}(\mathbb R^{3})}&\le\left\|\langle\cdot\rangle^{\frac \gamma2} \nabla_{\mathcal H_{+}}\left(\omega_{t} f\right)\right\|_{L^{2}(\mathbb R^{3})}+\left\|\langle\cdot\rangle^{\frac \gamma2}\left[\omega_{t}, \nabla_{\mathcal H_{+}}\right]f\right\|_{L^{2}(\mathbb R^{3})}\\
     &\le\frac{1}{C_{1}}\left\|\omega_{t} f\right\|_{A}+\frac{bc_{0}}{1+t}\left\|\langle\cdot\rangle^{\frac \gamma2+b-1}\omega_{t} f\right\|_{L^{2}(\mathbb R^{3})}
     \le\frac{bc_{0}+1}{C_{1}}\left\|\omega_{t} f\right\|_{A},
\end{align*}
since $\sigma=\max\{1, \frac{b-\gamma}{2b}\}$, then $\frac{2b}{b-\gamma}\sigma\ge1$, by using H$\rm\ddot o$lder's inequality and \eqref{energy-weighted1}, we have
\begin{align}\label{key-111}
\begin{split}
     &2\sigma\int_{0}^{t}\tau^{2\sigma-1}\left\|\omega_{t} \nabla_{\mathcal H_{+}} f(\tau)\right\|^{2}_{L^{2}(\mathbb R^{3})}d\tau\\
     &\le 2\sigma\left(\frac{bc_{0}+1}{C_{1}}\right)^{\frac{2b}{b-\gamma}}\int_{0}^{t}\tau^{\frac{2b}{b-\gamma}\sigma-1}\left\|\langle\cdot\rangle^{\frac b2}\omega_{\tau} \tau^{\sigma}\nabla_{\mathcal H_{+}} f\right\|^{\frac{-2\gamma}{b-\gamma}}_{L^{2}(\mathbb R^{3})}\left\|\omega_{\tau} f(\tau)\right\|_{A}^{\frac{2b}{b-\gamma}}d\tau\\
     &\le 2\sigma\left(\frac{b(c_{0}+1)^{\sigma+1}}{C_{1}}\right)^{\frac{2b}{b-\gamma}}\left(\int_{0}^{t}\left\|\langle\cdot\rangle^{\frac b2}\omega_{\tau} \tau^{\sigma}\nabla_{\mathcal H_{+}} f\right\|^{2}_{L^{2}(\mathbb R^{3})}d\tau\right)^{\frac{-\gamma}{b-\gamma}}\left(\int_{0}^{t}\left\|\omega_{\tau} f(\tau)\right\|_{A}^{2}d\tau\right)^{\frac{b}{b-\gamma}}\\
     &\le \frac{c_{0}}{(1+T)^{2}}\int_{0}^{t}\left\|\langle\cdot\rangle^{\frac b2}\omega_{\tau} \tau^{\sigma}\nabla_{\mathcal H_{+}} f(\tau)\right\|^{2}_{L^{2}(\mathbb R^{3})}d\tau+\left(\frac{4C_{T}\varepsilon_{0}(b-\gamma)(c_{0}+1)^{2\sigma+1}(T+1)}{C_{1}}\right)^{2}.
\end{split}
\end{align}
From this, we can deduce that for all $0<t\le T$
\begin{align*}
     &\left\|\omega_{t} t^{\sigma}\nabla_{\mathcal H_{+}} f(t)\right\|^2_{L^2(\mathbb R^3)}+\frac{c_{0}}{(1+T)^{2}}\int_{0}^{t}\left\|\langle\cdot\rangle^{\frac b2}\omega_{\tau}\tau^{\sigma}\nabla_{\mathcal H_{+}} f(\tau)\right\|^{2}_{L^{2}(\mathbb R^{3})}d\tau+\int_{0}^{t}\left\|\omega_{\tau}\tau^{\sigma}\nabla_{\mathcal H_{+}} f(\tau)\right\|^{2}_{A}d\tau\\
     &\le2\left(2C_{3}+(4C_{6})^{2}\right)\int_{0}^{t}\left\|\omega_{\tau}\tau^{\sigma}\nabla_{\mathcal H_{+}} f(\tau)\right\|^{2}_{L^{2}(\mathbb R^{3})}d\tau+(\tilde C_{1}\varepsilon_{0})^{2},
\end{align*}
where $\tilde C_{1}$ is a positive constant depends on $c_{0}, \ b, \ \gamma, \ T$ and $C_{1}-C_{7}$. Then from Gronwall's inequality,  we can get that for all $0<t\le T$
\begin{align*}
     &\left\|\omega_{t} t^{\sigma}\nabla_{\mathcal H_{+}} f(t)\right\|^2_{L^2(\mathbb R^3)}+\frac{c_{0}}{(1+T)^{2}}\int_{0}^{t}\left\|\langle\cdot\rangle^{\frac b2}\omega_{\tau}\tau^{\sigma}\nabla_{\mathcal H_{+}} f(\tau)\right\|^{2}_{L^{2}(\mathbb R^{3})}d\tau+\int_{0}^{t}\left\|\omega_{\tau}\tau^{\sigma}\nabla_{\mathcal H_{+}} f(\tau)\right\|^{2}_{A}d\tau\\
     &\le\left(1+2T\left(2C_{3}+(4C_{6})^{2}\right)\right)^{2}e^{2T\left(2C_{3}+(4C_{6})^{2}\right)}(\tilde C_{1}\varepsilon_{0})^{2}.
\end{align*}
Assume $m\ge1$ and \eqref{high-v} holds for all $p \in\mathbb N$ with $1\le p\le m-1$
\begin{equation}\label{hypothesis-v}
\begin{split}
        &\left\|\omega_{t} t^{p\sigma}\nabla_{\mathcal H_{+}}^{p} f(t)\right\|^2_{L^2(\mathbb R^3)}+\frac{c_{0}}{(1+T)^{2}}\int_{0}^{t}\left\|\omega_{\tau} \tau^{p\sigma}\langle\cdot\rangle^{\frac b2}\nabla_{\mathcal H_{+}}^{p} f(\tau)\right\|^2_{L^2(\mathbb R^3)}d\tau\\
        &\quad+\int_0^t\left\|\omega_{\tau}\tau^{p\sigma}\nabla_{\mathcal H_{+}}^{p}f(\tau)\right\|^2_{A}d\tau\le \left(L^{p-1}(p-2)!\right)^{2\sigma},  \quad \forall\ 0<t\le T.
\end{split}
\end{equation}
Now, we shall prove \eqref{hypothesis-v} for $p=m$. We apply the Proposition \ref{prop2.1} to analyse terms $Q_{1}-Q_{3}$. For the term $Q_{1}$, by using \eqref{energy-weighted1}, the Cauchy-Schwarz inequality and hypothesis induction \eqref{hypothesis-v}, one has
\begin{align*}
     Q_{1}&\le(16C_{7}(m-2)!)^{2}(T+1)^{2m\sigma}+\left((\varepsilon_{0}(T+1))^{4\sigma}+\frac18\right)\int_{0}^{t}\left\|\omega_{\tau}\tau^{m\sigma}\nabla_{\mathcal H_{+}}^{m} f(\tau)\right\|^{2}_{A}d\tau\\
     &\quad+128\left(C_{7}\sum_{p=1}^{m-1}\binom{m}{p}\sqrt{\frac{(m-p+2)!}{2}}(T+1)^{(m-p)\sigma}\left(L^{p-1}(p-2)!\right)^{\sigma}\right)^{2}.
\end{align*}
For terms $Q_{2}$ and $Q_{3}$, noting that using \eqref{2-2-1}, one has for all $0<t\le T$
\begin{align*}
     \left\|t^{p\sigma}\omega_{t}\nabla^{p}_{\mathcal H_{+}}f(t)\right\|_{2, \frac\gamma2}&\le\frac{(T+1)^{\sigma}}{C_{1}}\left\|t^{(p-1)\sigma}\omega_{t}\nabla^{p-1}_{\mathcal H_{+}}f(t)\right\|_{A}, \quad \forall \ p\ge1,
\end{align*}
then follows from \eqref{2.8}, \eqref{energy-weighted1}, \eqref{hypothesis-v} and the Cauchy-Schwarz inequality that
\begin{align*}
     Q_{2}
     &\le\frac18\int_{0}^{t}\left\|\tau^{m\sigma}\omega_{\tau}\nabla^{m}_{\mathcal H_{+}}f(\tau)\right\|^{2}_{A}d\tau+T\left(C_{5}\right)^{m}(m+3)!\\
     &\quad+\left(\frac{C_{6}(T+1)^{\sigma}}{C_{1}}\sum_{p=1}^{m}\binom{m}{p}\left(\sqrt{C_{5}}\right)^{m-p}\sqrt{(m-p+3)!}\left(L^{p-2}(p-3)!\right)^{\sigma}\right)^{2},
\end{align*}
from \eqref{2.7}, \eqref{energy-weighted1}, \eqref{hypothesis-v} and the Cauchy-Schwarz inequality, one has for all $0<t\le T$
\begin{align*}
     &Q_{3}\le\left(4C_{4}\varepsilon_{0}+\frac{\varepsilon_{0}C_{4}(T+1)^{\sigma}}{C_{1}}+\frac{1}{16}\right)\int_{0}^{t}\left\|\omega_{\tau} \tau^{m\sigma}\nabla^{m}_{\mathcal H_{+}}f(\tau)\right\|^{2}_{A}d\tau\\
     &\qquad+C_{4}\varepsilon_{1}\left\|\omega_{t} t^{m\sigma}\nabla^{m}_{\mathcal H_{+}}f\right\|^{2}_{L^{\infty}(]0, T]; L^{2}(\mathbb R^{3}))}+\frac{2\varepsilon_{1}C_{4}(T+1)^{\sigma}}{C_{1}}\left(L^{m-2}(m-3)!\right)^{2\sigma}\\
     &\qquad+16\left(\frac{C_{4}(T+1)^{\sigma}}{C_{1}}\sum_{p=2}^{m-1}\binom{m}{p}\left(L^{m-2}(p-2)!(m-p-2)!\right)^{\sigma}\right)^{2}.
\end{align*}
Since $\sigma\ge1$, then from $p!q!\le(p+q)!$, one can deduce
\begin{align}\label{summation}
\begin{split}
     &\sum_{p=2}^{m-1}\binom{m}{p}\left((p-2)!(m-p-2)!\right)^{\sigma}\le4\left((m-2)!\right)^{\sigma}\sum_{p=2}^{m-2}\frac{m^{2}}{p^{2}(m-p)^{2}}\le 64\left((m-2)!\right)^{\sigma},\\
     &\sum_{p=2}^{m-1}\binom{m}{p}\sqrt{\frac{(m-p+2)!}{2}}\left((p-2)!\right)^{\sigma}\le2\sum_{p=2}^{m-1}\binom{m}{p}(m-p-2)!\left((p-2)!\right)^{\sigma}\le128\left((m-2)!\right)^{\sigma},\\
     &\sum_{p=1}^{m}\binom{m}{p}\sqrt{(m-p+3)!}\left((p-3)!\right)^{\sigma}\le6\sum_{p=1}^{m}\binom{m}{p}(m-p-2)!\left((p-2)!\right)^{\sigma}\le400\left((m-2)!\right)^{\sigma},
\end{split}
\end{align}
then taking $L\ge(T+1)^{2\sigma}$ and $\varepsilon_{0}>0$ small enough such that
$$\varepsilon_{0}\le\min\left\{\frac{1}{8(T+1)}, \ \frac{1}{128C_{4}}, \ \frac{C_{1}}{32C_{4}(T+1)^{\sigma}}\right\},$$
we can get that for all $0<t\le T$
\begin{align}\label{integration rest}
\begin{split}
     &\left\|\omega_{t} t^{m\sigma}\nabla_{\mathcal H_{+}}^{m} f(t)\right\|^2_{L^2(\mathbb R^3)}+\frac{2c_{0}}{(1+T)^{2}}\int_{0}^{t}\left\|\langle\cdot\rangle^{\frac b2}\omega_{\tau}\tau^{m\sigma}\nabla_{\mathcal H_{+}}^{m} f(\tau)\right\|^{2}_{L^{2}(\mathbb R^{3})}d\tau+\int_{0}^{t}\left\|\omega_{\tau}\tau^{m\sigma}\nabla_{\mathcal H_{+}}^{m} f(\tau)\right\|^{2}_{A}d\tau\\
     &\le 2m\sigma\int_{0}^{t}\tau^{2m\sigma-1}\left\|\omega_{\tau}\nabla_{\mathcal H_{+}}^{m} f(\tau)\right\|^{2}_{L^{2}(\mathbb R^{3})}d\tau+2C_{3}\int_{0}^{t}\left\|\omega_{\tau}\tau^{m\sigma}\nabla_{\mathcal H_{+}}^{m} f(\tau)\right\|^{2}_{2, \frac\gamma2}d\tau\\
     &\quad+\frac12\left\|\omega_{t} t^{m\sigma}\nabla^{m}_{\mathcal H_{+}}f\right\|^{2}_{L^{\infty}(]0, c_{0}/2]; L^{2}(\mathbb R^{3}))}+B_{0}\left(L^{m-2}(m-2)!\right)^{2\sigma},
\end{split}
\end{align}
with
\begin{align*}
     B_{0}=1000\left(C_{3}+\frac{(C_{4}+C_{6})(T+1)^{\sigma}}{C_{1}}+1\right).
\end{align*}
It remains to consider
$$2m\sigma\int_{0}^{t}\tau^{2m\sigma-1}\left\|\omega_{\tau}\nabla_{\mathcal H_{+}}^{m} f(\tau)\right\|^{2}_{L^{2}(\mathbb R^{3})}d\tau.$$
Applying \eqref{key-111} with $f=\nabla_{\mathcal H_{+}}^{m-1}f$ and the hypothesis \eqref{hypothesis-v}, we have
\begin{align*}
     &2m\sigma\int_{0}^{t}\tau^{2m\sigma-1}\left\|\omega_{\tau} \nabla_{\mathcal H_{+}}^{m} f(\tau)\right\|^{2}_{L^{2}(\mathbb R^{3})}d\tau\\
     &\le \frac{c_{0}}{(1+T)^{2}}\int_{0}^{t}\left\|\langle\cdot\rangle^{\frac b2}\omega_{\tau} \tau^{m\sigma}\nabla_{\mathcal H_{+}}^{m} f\right\|^{2}_{L^{2}(\mathbb R^{3})}d\tau+\left(\frac{4(b-\gamma)(c_{0}+1)(T+1)}{C_{1}}L^{m-2}(m-3)!\right)^{2\sigma}.
\end{align*}
Plugging it back into \eqref{integration rest}, we can get that for all $0<t\le T$
\begin{align}\label{integration rest-1}
\begin{split}
     &\left\|\omega_{t} t^{m\sigma}\nabla_{\mathcal H_{+}}^{m} f(t)\right\|^2_{L^2(\mathbb R^3)}+\frac{c_{0}}{(1+T)^{2}}\int_{0}^{t}\left\|\langle\cdot\rangle^{\frac b2}\omega_{\tau}\tau^{m\sigma}\nabla_{\mathcal H_{+}}^{m} f(\tau)\right\|^{2}_{L^{2}(\mathbb R^{3})}d\tau\\
     &\quad+\int_{0}^{t}\left\|\omega_{\tau}\tau^{m\sigma}\nabla_{\mathcal H_{+}}^{m} f(\tau)\right\|^{2}_{A}d\tau\\
     &\le 4C_{3}\int_{0}^{t}\left\|\omega_{\tau}\tau^{m\sigma}\nabla_{\mathcal H_{+}}^{m} f(\tau)\right\|^{2}_{2, \frac\gamma2}d\tau+\frac12\left\|\omega_{t} t^{m\sigma}\nabla^{m}_{\mathcal H_{+}}f\right\|^{2}_{L^{\infty}(]0, T]; L^{2}(\mathbb R^{3}))}\\
     &\quad+\left(\tilde B_{0}L^{m-2}(m-2)!\right)^{2\sigma},
\end{split}
\end{align}
where
$$\tilde B_{0}=B_{0}+\frac{4(b-\gamma)(c_{0}+1)(T+1)}{C_{1}},$$
this implies that for all $0<t\le T$
\begin{align*}
     &\frac12\left\|\omega_{t} t^{m\sigma}\nabla^{m}_{\mathcal H_{+}}f\right\|^{2}_{L^{\infty}(]0, T]; L^{2}(\mathbb R^{3}))}\le2C_{3}\int_{0}^{t}\left\|\omega_{\tau}\tau^{m\sigma}\nabla_{\mathcal H_{+}}^{m} f(\tau)\right\|^{2}_{L^{2}(\mathbb R^{3})}d\tau+\left(\tilde B_{0}L^{m-2}(m-2)!\right)^{2\sigma},
\end{align*}
substituting it into \eqref{integration rest-1} yields
\begin{align*}
     &\left\|\omega_{t} t^{m\sigma}\nabla_{\mathcal H_{+}}^{m} f(t)\right\|^2_{L^2(\mathbb R^3)}+\frac{c_{0}}{(1+T)^{2}}\int_{0}^{t}\left\|\langle\cdot\rangle\omega_{\tau}\tau^{m\sigma}\nabla_{\mathcal H_{+}}^{m} f(\tau)\right\|^{2}_{L^{2}(\mathbb R^{3})}d\tau+\int_{0}^{t}\left\|\omega_{\tau}\tau^{m\sigma}\nabla_{\mathcal H_{+}}^{m} f(\tau)\right\|^{2}_{A}d\tau\\
     &\le4C_{3}\int_{0}^{t}\left\|\omega_{\tau}\tau^{m\sigma}\nabla_{\mathcal H_{+}}^{m} f(\tau)\right\|^{2}_{L^{2}(\mathbb R^{3})}d\tau+2\left(\tilde B_{0}L^{m-2}(m-2)!\right)^{2\sigma}.
\end{align*}
It follows from Gronwall inequality that for all $0<t\le T$
\begin{align*}
\begin{split}
     &\left\|\omega_{t} t^{m\sigma}\nabla_{\mathcal H_{+}}^{m} f(t)\right\|^2_{L^2(\mathbb R^3)}\le 8C_{3}e^{4TC_{3}}\left(\tilde B_{0}L^{m-2}(m-2)!\right)^{2\sigma},
\end{split}
\end{align*}
therefore, we can deduce that
\begin{align*}
     &\left\|\omega_{t} t^{m\sigma}\nabla_{\mathcal H_{+}}^{m} f(t)\right\|^2_{L^2(\mathbb R^3)}+\frac{c_{0}}{(1+T)^{2}}\int_{0}^{t}\left\|\langle\cdot\rangle^{\frac b2}\omega_{\tau}\tau^{m\sigma}\nabla_{\mathcal H_{+}}^{m} f(\tau)\right\|^{2}_{L^{2}(\mathbb R^{3})}d\tau\\
     &\quad+\int_{0}^{t}\left\|\omega_{\tau}\tau^{m\sigma}\nabla_{\mathcal H_{+}}^{m} f(\tau)\right\|^{2}_{A}d\tau\le\left(L^{m-1}(m-2)!\right)^{2\sigma},  \quad \forall \ t\in]0, T],
\end{align*}
if we choose
$$L\ge\max\left\{4\tilde B_{0}\left(8C_{3}(T+1)+1\right)e^{4TC_{3}}, (T+1)^{2}\right\}.$$
\end{proof}

\bigskip
\noindent {\bf Acknowledgements.} This work was supported by the NSFC (No.12031006) and the Fundamental
Research Funds for the Central Universities of China.

\end{document}